\newcommand{\rn}{{\mathbb{R}^n}}
\newcommand{\phii}{\varphi}
\newcommand{\pp}{\partial}
\newcommand{\w}{\omega}
\newcommand{\ws}{\omega^s_\Omega}
\newcommand{\W}{\Omega}
\newcommand{\eps}{\varepsilon}
\newcommand{\regf}{-s+1/2}
\newcommand{\regg}{s+1/2}
\def\new{}
\def\R{{\mathbb {R}}}
\def\T{{\mathcal {T}}}
\newtheorem{theorem}{Theorem}[section]
\newtheorem{lemma}[theorem]{Lemma}
\newtheorem{proposition}[theorem]{Proposition}
\newtheorem{corollary}[theorem]{Corollary}
\theoremstyle{remark}
\newtheorem{remark}[theorem]{Remark}
\theoremstyle{definition}
\newtheorem{definition}[theorem]{Definition}
\numberwithin{equation}{section}
\title[FE for the nonhomogeneous fractional Dirichlet problem]{Finite element approximations of the nonhomogeneous fractional Dirichlet problem}
\thanks{Supported by CONICYT-Chile through Fondecyt project 1150056, by CONICET under grant PIP 2014-2016
11220130100184CO and by ANPCyT under grant 2014-1771}
\author[G. Acosta, J.P. Borthagaray and N. Heuer]{Gabriel Acosta, Juan Pablo Borthagaray and Norbert Heuer}
\address[G. Acosta]{IMAS - CONICET and Departamento de Matem\'a\-tica, FCEyN - Universidad de Buenos Aires, Ciudad Universitaria, Pabell\'on I  (1428) Buenos Aires, Argentina.}
\address[J.P. Borthagaray]{IMAS - CONICET and Departamento de Matem\'a\-tica, FCEyN - Universidad de Buenos Aires, Ciudad Universitaria, Pabell\'on I  (1428) Buenos Aires, Argentina and Department of Mathematics, University of Maryland, College Park, MD 20742, USA.}
\address[N. Heuer]{Facultad de Matem\'aticas, Pontificia Universidad Cat\'olica de Chile, Avenida Vicku\~na Mackenna 4860, Santiago, Chile.}
\email[G. Acosta]{gacosta@dm.uba.ar}
\email[J.P. Borthagaray]{jpb@umd.edu}
\email[N. Heuer]{nheuer@mat.uc.cl}
\subjclass[2010]{65N30,65N12,35S15}
\keywords{Fractional Laplacian, Mixed Finite Elements, A priori error analysis}
\begin{document}
\begin{abstract}
We study finite element approximations of the nonhomogeneous Dirichlet problem for the fractional Laplacian. Our approach is based on weak imposition of the Dirichlet condition and incorporating a nonlocal analogous of the normal derivative as a Lagrange multiplier in the formulation of the problem. In order to obtain convergence orders for our scheme, regularity estimates are developed, both for the solution and its nonlocal derivative. The method we propose requires that, as meshes are refined, the discrete problems be solved in a family of domains of growing diameter.
\end{abstract}

\maketitle

\section{Introduction and preliminaries} \label{sec:intro}
  Anomalous diffusion refers to phenomena arising whenever the associated underlying stochastic process is not given by Brownian motion. 
One striking example of a nonlocal operator is the fractional Laplacian of order $s$ ($0<s<1$), which we will denote by $(-\Delta)^s$.

If the domain under consideration is the whole space $\rn$, then $(-\Delta)^s$ is a pseudodifferential operator with symbol $|\xi|^{2s}$. Indeed, for a function $u$ in the Schwartz class $\mathcal{S}$, let
\begin{equation}
(-\Delta)^s u = \mathcal{F}^{-1} \left( |\xi|^{2s} \mathcal{F} u \right) ,
\label{eq:fourier}
\end{equation}
where $\mathcal{F}$ denotes the Fourier transform. 
The fractional Laplacian can equivalently be defined by means of the identity \cite{Hitchhikers}
\begin{equation}
(-\Delta)^s u (x) = C(n,s) \mbox{ P.V.} \int_\rn \frac{u(x)-u(y)}{|x-y|^{n+2s}} \, dy,
\label{eq:fraccionarioyo}
\end{equation}
where the normalization constant 
\begin{equation} \label{eq:cns}
C(n,s) = \frac{2^{2s} s \Gamma(s+\frac{n}{2})}{\pi^{n/2} \Gamma(1-s)} 
\end{equation}
is taken in order to be consistent with definition \eqref{eq:fourier}.

In the theory of stochastic processes, this operator appears as the infinitesimal generator of a stable Lévy process \cite{Bertoin}. Indeed, it 
is possible to obtain a fractional heat equation as a limit of a random walk with long jumps \cite{Valdinoci}. 

There are two different approaches to the definition of the fractional Laplacian on an open bounded set $\W$. 
On the one hand, to analyze powers of the Laplacian in a spectral sense: given a function $u$, to consider its spectral decomposition in terms of the eigenfunctions of the Laplacian with homogeneous Dirichlet boundary condition, and to take the operator that acts by raising to the power $s$ the corresponding eigenvalues. Namely, if 
$\{\psi_k, \lambda_k \}_{k \in \mathbb{N}} \subset H^1_0 (\W) \times \mathbb{R}_+ $ denotes the set of normalized eigenfunctions and eigenvalues, then this operator is defined as 
\[
(-\Delta)_S^s \, u (x) =  \sum_{k=1}^\infty \lambda_k^s ( u , \psi_k )_{L^2(\W)} \psi_k(x),
   \qquad x\in\Omega.
\]

On the other hand, there is the possibility to keep the motivation coming from the stochastic process leading to the definition of $(-\Delta)^s$ in $\rn$. 
This option leads to two different types of operators: one in which the stochastic process is restricted to $\W$ and one in which particles are allowed to jump anywhere in the space.
The first of these two is the infinitesimal generator of a censored stable Lévy process \cite{Bogdan}, we refer to it as \emph{regional} fractional Laplacian and it is given by 
\begin{equation}
(-\Delta)^s_{\W} u(x) = C(n,s,\W) \mbox{ P.V.} \int_\W \frac{u(x)-u(y)}{|x-y|^{n+2s}} \, dy,
   \quad x\in\Omega.
\label{eq:regional}
\end{equation}

The second of the two operators  motivated  by L\'evy processes leads to considering the integral formulation \eqref{eq:fraccionarioyo}. Observe that, unlike the aforementioned fractional Laplacians, the definition of this operator does not depend on the domain $\W$. In this work we deal with this operator, which we denote by $(-\Delta)^s$ and simply call it the fractional Laplacian. 
The possibility of having arbitrarily long jumps in the random walk explains why, when considering a fractional Laplace equation on
a bounded domain $\W$, boundary conditions should be prescribed on $\W^c = \rn \setminus \overline \W$. 

For an account of numerical methods for the fractional Laplacians mentioned above, we refer the reader to the recent survey \cite{survey}.
Specific to the numerical treatment of \eqref{eq:fraccionarioyo}, we mention algorithms based on finite elements \cite{ABB, AB, AinsworthGlusa_adaptive, AinsworthGlusa_efficient, DEliaGunzburger}, 
finite differences \cite{HuangOberman}, Dunford-Taylor representation formulas \cite{BLP3}, Nystr\"om \cite{ABBM} and Monte Carlo \cite{Kyprianou} methods.

\new{Given $s \in (0,1)$}, in this work we study finite element approximations to problem
\begin{equation} \label{eq:dirichletnh}
\left\lbrace
  \begin{array}{rl}
      (-\Delta)^s u = f & \mbox{ in }\W, \\
      u =  g & \mbox{ in }\W^c , \\
      \end{array}
    \right.
\end{equation}
where the functions $f$ and $g$ are data belonging to suitable spaces. Analysis of the homogeneous counterpart of \eqref{eq:dirichletnh} was carried out in \cite{AB}, 
where a numerical method was developed, theoretical error bounds were established and numerical results in agreement with the theoretical predictions were obtained. 
Solvability of a class of nonhomogeneous Dirichlet problems for nonlocal operators --involving not necessarily symmetric or continuous kernels--
was studied in \cite{Felsinger2015}.

An important result for dealing with \eqref{eq:dirichletnh} is the following integration 
by parts formula for the fractional Laplacian  \cite{dipierro2014nonlocal}: for $u, v$ smooth enough,
it holds
\begin{equation}\label{eq:parts} \begin{split}
\frac{C(n,s)}{2} \iint_Q &  \frac{(u(x)-u(y))  (v(x)-v(y))}{|x-y|^{n+2s}} \, dx \, dy \,  \\ 
& = \int_\W  v(x) (-\Delta)^su(x) \, dx + \int_{\W^c} v(x) \,  \mathcal{N}_s u(x) \, dx ,
\end{split} \end{equation}
where $\mathcal{N}_s u$ is the \emph{nonlocal normal derivative} of $u$, given by 
 \begin{equation*} 
\mathcal{N}_s u = C(n,s) \, \int_\W \frac{u(x)-u(y)}{|x-y|^{n+2s}} \, dy, \ x \in \W^c,
\end{equation*}
and $Q = (\W \times \rn) \cup (\rn \times \W)$. 
\new{Along this paper we always work with a \emph{fixed} value of $s$. Nonetheless, it is instructive to mention that $\mathcal{N}_s u$ recovers in the limit $s\to 1$ the notion of the classical normal derivative (cf. Remark \ref{rem:singular}).}

The aim of this work is to build finite element approximations for both, the solution $u$ of \eqref{eq:dirichletnh} as well as for its nonlocal derivative $\mathcal{N}_s u$.  In this regard, we discuss briefly a standard direct approach in which the Dirichlet condition $g$ is strongly imposed. As it turns out, this simple and optimally convergent method for the variable $u$, does not provide a computable approximation of $\mathcal{N}_s u$. 
In order to overcome this limitation  a mixed formulation of the problem --in which $\mathcal{N}_s u$ plays the role of a Lagrange multiplier--
 is introduced and numerically approximated. By means of this approach, which is the main object of this paper,
 numerical approximations for both $u$ and $\mathcal{N}_s u$ are delivered and optimal order of convergence is proved for them.
\new{In this way,  our method inaugurates the variational setting
for the treatment of non-homogeneous  essential boundary conditions of fractional operators. This is a promising  scenario in which one might consider more general problems, including coupled systems involving fractional and integer-order operators.}

Throughout this paper, $C$ denotes a positive constant which may be different in various places.

 \subsection{Sobolev spaces}
Given an open set $\W \subset \rn$ and $s \in(0,1)$,  the fractional Sobolev space $H^s(\W)$ is defined by
\[
H^s(\W) = \left\{ v \in L^2(\W) \colon |v|_{H^s(\W)} < \infty \right\},
\]
where $|\cdot|_{H^s(\W)}$ is the Aronszajn-Slobodeckij seminorm
\[
|v|_{H^s(\W)}^2 =  \iint_{\W^2} \frac{|v(x)-v(y)|^2}{|x-y|^{n+2s}} \, dx \, dy.
\]

Naturally, $H^s(\W)$ is a Hilbert space furnished with the norm $\|\cdot\|_{H^s(\W)}^2 = \|\cdot\|_{L^2(\W)}^2 + |\cdot|_{H^s(\W)}^2 .$
We denote $\langle \cdot, \cdot \rangle_{H^s(\W)}$ the bilinear form
$$
\langle u , v \rangle_{H^s(\W)} = \iint_{\W^2} \frac{(u(x)-u(y))(v(x)-v(y))}{|x-y|^{n+2s}} \, dx \, dy, \quad u, v \in H^s(\W). 
 $$

Sobolev spaces of order greater than one are defined as follows. If $s>1$ is not an integer, the decomposition $s = m + \sigma$, where $m \in \mathbb{N}$ and $\sigma \in (0,1)$,  allows to define $H^s(\W)$ by setting
\[
H^s(\W) = \left\{ v \in H^m(\W) \colon |D^\alpha v|_{H^{\sigma}(\W)} < \infty \text{ for all } \alpha \text{ s.t. }  |\alpha| = m \right\}.
\]

A space of interest in our analysis consists of the set
$$\widetilde{H}^s(\W) = \{ v \in H^s(\rn) \colon \text{supp } v \subset \overline{\W} \},$$
endowed with the norm 
\[
\| v \|_{\widetilde H^s(\W)} = \| \tilde v \|_{H^s(\rn)},
\]
where $\tilde v$ is the extension of $v$ by zero outside $\W$. For simplicity
of notation, whenever we refer to a function in $\widetilde{H}^s(\W)$, we assume that it is extended by zero onto $\W^c$.

Let $s > 0$. By using $L^2(\W)$ as a pivot space, we have that the duality pairing between $H^s(\W)$ and its dual $\widetilde{H}^{-s}(\W) = (H^s(\W))'$ coincides with the $L^2(\W)$ inner product. Moreover, we denote the dual of $\widetilde{H}^{s}(\W)$ by $H^{-s}(\W)$. 

\begin{remark}[Duality pairs]
In order to keep the notation as clear as possible, along the following sections 
we    write $\int_\W \mu v$  for $\mu\in H'$ and $v\in H$. However, if the duality 
needs to be stressed we use $\langle \mu, v\rangle$ instead.
\label{rem:dualitypair}
\end{remark}

We state some important theoretical results regarding the space $\widetilde{H}^{s}(\W)$ (see e.g.,  \cite[Proposition 2.4]{AB}). 
\begin{proposition}[Poincar\'e inequality] Given a domain $\W$ and $s>0$, there exists a constant $C$ such that, for all $v \in \widetilde H^s(\W)$,
\begin{equation}
\| v \|_{L^2(\W)} \le C | v |_{H^s(\rn)}.
\label{eq:poincare}
\end{equation}
\end{proposition}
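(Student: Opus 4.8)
The plan is to exploit the zero extension of $v$ directly in the Aronszajn--Slobodeckij seminorm, using the fact that a bounded domain ``sees'' a fixed positive amount of the singular kernel coming from the far field, where $v$ vanishes. I work in the range $s\in(0,1)$ relevant here, for which $|v|_{H^s(\rn)}^2=\iint_{\rn\times\rn}\frac{|v(x)-v(y)|^2}{|x-y|^{n+2s}}\,dx\,dy$. Since the domain is bounded, I would first fix radii $0<R<2R$ with $\overline{\Omega}\subset B_R$ and set $B'=B_{2R}$, so that, recalling that $v$ is extended by zero, $v\equiv 0$ on $\rn\setminus\overline{\Omega}$ and in particular on $\rn\setminus B'$.

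The key step is to discard most of the double integral defining $|v|_{H^s(\rn)}^2$ and retain only the contribution of pairs $(x,y)$ with $x\in\Omega$ and $y\in\rn\setminus B'$; this is legitimate because the integrand is nonnegative, so shrinking the region of integration only decreases the value. For such pairs $v(x)-v(y)=v(x)$, whence
\begin{equation*}
|v|_{H^s(\rn)}^2 \;\ge\; \int_{\Omega}|v(x)|^2\left(\int_{\rn\setminus B'}\frac{dy}{|x-y|^{n+2s}}\right)dx.
\end{equation*}

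It then remains to bound the inner integral below by a positive constant independent of $x\in\Omega$. Here I would use that for $x\in B_R$ and $y\in\rn\setminus B_{2R}$ one has $|x-y|\le |x|+|y|\le R+|y|$, so that
\begin{equation*}
\int_{\rn\setminus B'}\frac{dy}{|x-y|^{n+2s}}\;\ge\;\int_{\rn\setminus B_{2R}}\frac{dy}{(R+|y|)^{n+2s}}\;=:\;c_0,
\end{equation*}
and passing to spherical coordinates shows $c_0\in(0,\infty)$, the finiteness using $2s>0$ and the positivity being immediate. Combining the two displays yields $\|v\|_{L^2(\Omega)}^2\le c_0^{-1}\,|v|_{H^s(\rn)}^2$, i.e. the claim with $C=c_0^{-1/2}$.

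The main (and essentially only) obstacle is the uniform-in-$x$ lower bound on the kernel integral over the far field: this is precisely where boundedness of $\Omega$ enters, since an unbounded domain would allow $x$ to escape to infinity and drive $c_0$ to zero, consistent with the failure of Poincar\'e-type inequalities on unbounded sets. Everything else is elementary bookkeeping, namely restricting the region of integration and using the vanishing of $v$ outside $B'$ to replace the difference quotient by $|v(x)|^2$.
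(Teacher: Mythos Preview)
Your argument is correct. The paper does not actually include a proof of this proposition; it is stated with a reference to \cite[Proposition~2.4]{AB}. That said, the ingredients of your argument are precisely the ones the paper displays in the immediately following Remark and Definition: the identity
\[
|v|_{H^s(\rn)}^2 = |v|_{H^s(\W)}^2 + 2\int_\W |v(x)|^2 \int_{\W^c}\frac{dy}{|x-y|^{n+2s}}\,dx
\]
for $v\in\widetilde H^s(\W)$, together with the uniform lower bound on $\ws(x)=\int_{\W^c}|x-y|^{-n-2s}\,dy$ over the bounded set $\W$. Your choice to integrate over the smaller region $\rn\setminus B_{2R}\subset\W^c$ is a harmless simplification of the same idea, and your observation that boundedness of $\W$ is exactly what makes the lower bound $c_0>0$ uniform in $x$ is the essential point.
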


\begin{remark} \label{rem:poincare}
Analogously to integer order Sobolev spaces, an immediate consequence of the Poincar\'e inequality is that the $H^s$-seminorm is equivalent to the full $H^s$-norm over $\widetilde H ^s(\W)$. Observe that, given $v \in \widetilde{H}^s (\W)$, its $H^s$-seminorm is given by
\[
|v|_{H^s(\rn)}^2 = |v|_{H^s(\W)}^2 + 2 \int_\W |v(x)|^2 \int_{\W^c} \frac{1}{|x-y|^{n+2s}} dy \, dx .
\]
\end{remark}

\begin{definition} \label{def:w}
Given a (not necessarily bounded) set $\W$ with Lipschitz continuous boundary and $s \in (0,1)$, we denote by $\ws : \W \to (0,\infty)$ the function given by
\begin{equation} \label{eq:def_w}
\ws (x) = \int_{\W^c} \frac{1}{|x-y|^{n+2s}} dy.
\end{equation}
Denoting $\delta(x) = d(x,\partial \W)$, the following bounds hold 
\[
0 < \frac{C}{\delta(x)^{2s}} \le \ws(x) \le \frac{\sigma_{n-1}}{2s \, \delta(x)^{2s}} \quad \forall x\in \W, 
\]
where $\sigma_{n-1}$ is the measure of the $n-1$ dimensional sphere and $C>0$ depends on $\W$. For the lower bound above we refer to \cite[formula (1.3.2.12)]{Grisvard}, whereas the upper bound is easily deduced by integration in polar coordinates.

\end{definition}

\begin{proposition}[Hardy inequalities, see {\cite{Dyda, Grisvard}}] \label{prop:hardy}
Let $\W$ be a bounded Lipschitz domain, then there exists $c=c(\W,n,s)>0$ such that 
\begin{equation}  \label{eq:hardy}
\begin{split}
\int_\W \frac{|v(x)|^2}{\delta(x)^{2s}} \, dx \, & \leq c \| v \|_{H^s(\W)}^2 \ \forall \, v \in H^s(\W) \quad \text{if } 0<s < 1/2, \\
\int_\W \frac{|v(x)|^2}{\delta(x)^{2s}} \, dx \, & \leq c |v|_{H^s(\W)}^2 \ \forall \, v \in \widetilde H ^s(\W) \quad \text{if } 1/2 < s < 1.
\end{split}
\end{equation}
\end{proposition}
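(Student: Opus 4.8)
The plan is to treat the two regimes separately (the excluded endpoint $s=1/2$ is precisely where the weight $\delta^{-2s}$ sits at the borderline of local integrability, and where a logarithmic correction would be required), and in both cases to reduce the estimate to a one–dimensional fractional Hardy inequality on the half–line by localization and flattening of the boundary. Since $\W$ is bounded and Lipschitz, I would first cover $\overline{\W}$ by finitely many open sets: an interior patch $U_0\Subset\W$ together with boundary patches $U_1,\dots,U_N$ on each of which, after a rigid motion, $\partial\W$ is the graph of a Lipschitz function. On $U_0$ the weight is harmless, $\delta(x)\ge\delta_0>0$, so the corresponding contribution is bounded by $\delta_0^{-2s}\|v\|_{L^2(\W)}^2$. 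On each boundary patch I would flatten $\partial\W$ by a bi–Lipschitz map $\Phi_j$ onto a piece of the half–space $\{x_n>0\}$. The two facts that make this reduction work are that the Gagliardo seminorm is invariant, up to multiplicative constants, under bi–Lipschitz changes of variables for $0<s<1$, and that $\delta(x)$ is comparable to the transformed normal coordinate $(\Phi_j x)_n$; both are standard consequences of the Lipschitz hypothesis.

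With the boundary straightened, the task becomes the model Hardy inequality on $\R^n_+$ with $\delta(x)=x_n$. Writing $x=(x',t)$ with $t>0$, I would apply, for almost every tangential $x'$, the one–dimensional fractional Hardy inequality on $(0,\infty)$: for $0<s<1/2$ it holds for every $u$ with the full $H^s(0,\infty)$ norm on the right and no condition at the origin, whereas for $1/2<s<1$ it holds, with the pure seminorm on the right, for $u$ vanishing at $0$, i.e.\ for $u\in\widetilde{H}^s(0,\infty)$; this is the content of \cite{Dyda}. Integrating over $x'$, the left–hand side reproduces $\int_{\R^n_+}|v|^2 x_n^{-2s}$, while the right–hand side becomes the anisotropic seminorm in which only differences in the normal direction appear.

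The key remaining step, and in my view the main obstacle, is to dominate this directional seminorm by the full isotropic seminorm on the half–space. On all of $\rn$ this is immediate from Plancherel, since $|\xi_n|^{2s}\le|\xi|^{2s}$; concretely, writing the one–dimensional kernel $|t-\tau|^{-1-2s}$ as a constant multiple of the integral of the $n$–dimensional kernel over the tangential directions recasts the directional seminorm in a form comparable to the isotropic one. The delicate point is that the proposition demands, in the regime $1/2<s<1$, the \emph{interior} seminorm $|v|_{H^s(\W)}$ and no lower–order term. This rules out the tempting shortcut through the extension by zero and the function $\ws$ of Definition \ref{def:w}: although the lower bound $\ws(x)\ge C\delta(x)^{-2s}$ together with the identity of Remark \ref{rem:poincare} gives $\int_\W|v|^2\delta^{-2s}\le C|v|_{H^s(\rn)}^2$, the surplus term $2\int_\W|v|^2\ws$ in that identity is itself comparable to the quantity we wish to bound, so the argument is circular and only yields control by the larger $\rn$–seminorm. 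For the same reason a naive partition–of–unity argument is unsatisfactory when $s>1/2$, because the derivatives of the cut–off functions produce $\|v\|_{L^2}^2$ terms that cannot be reabsorbed into the pure seminorm without invoking the Hardy inequality itself.

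To obtain the sharp seminorm statement for $1/2<s<1$ I would therefore avoid cut–offs and argue directly, following the ground–state substitution used in \cite{Dyda}: setting $v=h\,g$ with the explicit comparison weight $h$ (a power of the distance, or of $x_n$ in the model), the Gagliardo form of $v$ splits into a nonnegative remainder plus $\int_\W g^2\,V$, where the effective potential $V(x)$ is comparable to $\delta(x)^{-2s}$; discarding the remainder yields the inequality with the interior seminorm and an explicit constant. This is exactly the mechanism that fails at $s=1/2$, which confirms the necessity of excluding that exponent. For $0<s<1/2$ the weight is locally integrable and the trace plays no role, so the localization argument above, with the full–norm one–dimensional Hardy inequality and the $L^2$ remainder harmlessly absorbed into $\|v\|_{H^s(\W)}^2$, already delivers the claim.
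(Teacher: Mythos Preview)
The paper does not prove this proposition: it is stated with a reference to \cite{Dyda, Grisvard} and no argument is supplied. So there is no ``paper's own proof'' to compare against; you have done more than the authors by actually sketching one.

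That said, your sketch is sound in its architecture. The localization and boundary-flattening reduction is standard, and you have correctly identified the genuine difficulty in the regime $1/2<s<1$: namely that partition-of-unity or extension-by-zero arguments introduce lower-order terms that cannot be absorbed into the pure seminorm, and that the identity in Remark~\ref{rem:poincare} together with the lower bound on $\ws$ from Definition~\ref{def:w} is circular for this purpose. Your proposed remedy---the ground-state substitution $v=hg$ with $h$ a power of the distance, yielding an effective potential comparable to $\delta^{-2s}$---is precisely the mechanism used in \cite{Dyda} to obtain the sharp seminorm inequality, and it is the right way to proceed. For $0<s<1/2$ your argument is already complete at the level of a sketch, since the $L^2$ remainder is harmless there.
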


\begin{corollary} \label{cor:norma} If $0<s < 1/2$, then there exists a constant $c=c(\W,n,s)>0$ such that 
\begin{equation*}
\| v \|_{H^s(\rn)} \leq {c} \| v \|_{H^s(\W)}  \quad \forall v \in \widetilde{H}^s(\W).
 \end{equation*}
On the other hand, if $1/2<s<1$ there exists a constant ${c}={c}(\W,n,s)>0$ such that  
\begin{equation*}
\| v \|_{H^s(\rn)} \leq {c} | v |_{H^s({\W})}  \quad \forall v \in \widetilde{H}^s(\W) .
\end{equation*}
\end{corollary}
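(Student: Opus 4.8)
The plan is to combine the three ingredients assembled just above: the explicit expression for the $H^s(\rn)$-seminorm of a zero-extended function in Remark \ref{rem:poincare}, the pointwise upper bound for $\ws$ from Definition \ref{def:w}, and the Hardy inequalities of Proposition \ref{prop:hardy}. For $v \in \widetilde H^s(\W)$ extended by zero, Remark \ref{rem:poincare} gives
\begin{equation*}
|v|_{H^s(\rn)}^2 = |v|_{H^s(\W)}^2 + 2\int_\W |v(x)|^2 \, \ws(x) \, dx,
\end{equation*}
and since $\|v\|_{L^2(\rn)} = \|v\|_{L^2(\W)}$, controlling $\|v\|_{H^s(\rn)}$ reduces to bounding the weighted integral on the right. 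First I would insert the upper bound $\ws(x) \le \frac{\sigma_{n-1}}{2s\,\delta(x)^{2s}}$ to obtain
\begin{equation*}
\int_\W |v(x)|^2 \, \ws(x) \, dx \le \frac{\sigma_{n-1}}{2s} \int_\W \frac{|v(x)|^2}{\delta(x)^{2s}} \, dx,
\end{equation*}
which isolates the Hardy-type quantity $\int_\W |v|^2 \delta^{-2s}$. This is where the case distinction enters naturally, mirroring the two regimes of Proposition \ref{prop:hardy}.

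For $0 < s < 1/2$, the first Hardy inequality bounds $\int_\W |v|^2\delta^{-2s}$ by $c\|v\|_{H^s(\W)}^2$; feeding this back, together with $|v|_{H^s(\W)}^2 \le \|v\|_{H^s(\W)}^2$ and $\|v\|_{L^2(\rn)}^2 = \|v\|_{L^2(\W)}^2 \le \|v\|_{H^s(\W)}^2$, yields $\|v\|_{H^s(\rn)}^2 \le c\|v\|_{H^s(\W)}^2$, the first assertion. For $1/2 < s < 1$, the second Hardy inequality bounds the same quantity by $c|v|_{H^s(\W)}^2$, so the seminorm identity gives $|v|_{H^s(\rn)}^2 \le c|v|_{H^s(\W)}^2$. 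To close the full-norm estimate I would then invoke the Poincar\'e inequality \eqref{eq:poincare}, namely $\|v\|_{L^2(\W)} \le C|v|_{H^s(\rn)}$, which upgrades the seminorm control to control of $\|v\|_{H^s(\rn)}$, all expressed in terms of $|v|_{H^s(\W)}$.

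The argument is essentially an assembly of the stated results, so I do not anticipate a genuine obstacle. The only point requiring care is that for $s>1/2$ the Hardy inequality delivers only the seminorm on its right-hand side --- which is precisely why the statement in that regime is phrased with $|v|_{H^s(\W)}$ rather than the full norm --- and that handling the $L^2$ part there forces the extra use of Poincar\'e, whereas in the regime $s<1/2$ the full norm is available directly and no such step is needed.
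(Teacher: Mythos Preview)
Your proof is correct and is precisely the argument the paper intends: the corollary is stated without proof immediately after the Hardy inequalities, and your assembly of Remark~\ref{rem:poincare}, the upper bound on $\ws$ from Definition~\ref{def:w}, Proposition~\ref{prop:hardy}, and (for $s>1/2$) the Poincar\'e inequality \eqref{eq:poincare} is exactly how one fills in the details.
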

\begin{remark} \label{remark:un_medio}
When $s=1/2$, since Hardy's inequality fails, it is not possible to bound the $H^{1/2}(\rn)$-seminorm in terms of the $H^{1/2}(\Omega)$-norm for functions supported in $\overline \Omega$. However, for the purposes we pursue in this work, it suffices to notice that the estimate
\[
\|v \|_{H^{1/2}(\rn)} \le C |v|_{H^{1/2+\eps}(\Omega)} 
\]
holds for all $v \in \widetilde {H}^{1/2+\eps}(\Omega)$, 
where  $\eps > 0$ is fixed.
\end{remark}

An important tool for our work is the extension operator given by the following (see \cite[Theorem 5.4]{Hitchhikers} and \cite{zhou2015fractional}).
\begin{lemma}\label{extension}
Given $\sigma \ge 0$ and $\W$ a (not necessarily bounded) Lipschitz domain, there exists a continuous extension operator $E: H^\sigma(\W) \to H^\sigma(\rn).$ Namely, there is a constant $C(n,\sigma,\W)$ such that, for all $u \in H^\sigma(\W)$,
\[
\| Eu \|_{H^\sigma(\rn)} \le C \| u \|_{H^\sigma(\W)}.
\]
\end{lemma}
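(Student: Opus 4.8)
The plan is to construct $E$ by the standard localization--flattening--reflection procedure, paying attention to the two nonstandard features of the statement: that $\W$ may be unbounded and that $\sigma$ ranges over all of $[0,\infty)$. The case $\sigma=0$ is trivial, since $H^0(\W)=L^2(\W)$ and extension by zero suffices; so assume $\sigma>0$.

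First I would reduce to a model half-space configuration. Because $\W$ is Lipschitz with a uniform Lipschitz character (this is the content of the hypothesis in the unbounded case), its boundary can be covered by a locally finite family of open sets $\{U_j\}$ such that, in suitable rotated coordinates, $\partial\W\cap U_j$ is the graph of a Lipschitz function $\gamma_j$ with Lipschitz constant bounded independently of $j$, and $U_j\cap\W$ lies on one side of that graph. Adjoining an interior set $U_0$ with $\overline{U_0}\subset\W$, I would take a partition of unity $\{\phi_j\}_{j\ge 0}$ subordinate to $\{U_j\}$ with uniformly bounded derivatives and write $u=\sum_j \phi_j u$. The term $\phi_0 u$ is extended by zero; each remaining $\phi_j u$ is flattened by the bi-Lipschitz change of variables $(x',x_n)\mapsto(x',x_n-\gamma_j(x'))$ that straightens $\partial\W\cap U_j$, reducing the task to extending a compactly supported function across the hyperplane $\{x_n=0\}$ from the half-space $\R^n_+$.

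Second I would perform the extension in the half-space. For $\sigma\in(0,1)$ this is exactly the reflection-type operator of \cite[Theorem 5.4]{Hitchhikers}. To cover arbitrary $\sigma$ with a single operator I would instead use a higher-order reflection of Seeley/Stein type, $Eu(x',x_n)=\sum_k a_k\, u(x',-\lambda_k x_n)$ for $x_n<0$, with $\lambda_k>0$ and coefficients $a_k$ chosen so that $\sum_k a_k(-\lambda_k)^m=1$ for $0\le m\le \lceil\sigma\rceil$; this matches all derivatives up to the required order and yields an operator bounded on every integer-order space $H^m(\R^n)$, hence on the intermediate fractional spaces by interpolation (equivalently, one estimates the Gagliardo seminorm directly). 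Writing $\sigma=m+\sigma'$ with $m=\lfloor\sigma\rfloor$ one may alternatively extend $u$ and its normal derivatives up to order $m$ separately; I prefer the universal reflection because it avoids tracking traces.

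Finally I would transport each half-space extension back through the inverse chart, multiply by a cutoff and sum over $j$. Boundedness of $E$ on $H^\sigma(\rn)$ then follows by summing the local estimates, using the finite overlap of $\{U_j\}$ and the uniform bounds on the charts and on $\{\phi_j\}$; for $\sigma<1$ this is a direct estimate of the double integral, and for $\sigma\ge 1$ it reduces to the fractional part after differentiating. The main obstacle is the control of the nonlocal Gagliardo seminorm under the two operations that do not preserve it pointwise: the bi-Lipschitz flattening, whose Jacobian distortion must be absorbed into the kernel $|x-y|^{-n-2\sigma}$, and the reflection, where the cross terms between the half-space and its mirror image have to be bounded. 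Ensuring that every constant produced in these estimates depends only on the Lipschitz character of $\W$ (and on $n$, $\sigma$), and not on the particular chart, is what makes the argument valid for unbounded $\W$; this uniformity, rather than any single local estimate, is the crux.
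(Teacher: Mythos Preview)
The paper does not supply its own proof of this lemma; it simply records it as a known result, with references to \cite[Theorem 5.4]{Hitchhikers} (which handles the fractional range $\sigma\in(0,1)$) and \cite{zhou2015fractional}. So there is no ``paper's proof'' to compare against---your sketch is already more detailed than what the paper offers.

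That said, your outline has a genuine gap once $\sigma>1$. The bi-Lipschitz flattening $(x',x_n)\mapsto(x',x_n-\gamma_j(x'))$ is only guaranteed to be an isomorphism of $H^\sigma$ for $0\le\sigma\le 1$: by the chain rule it maps $H^1$ to $H^1$, and interpolation with $L^2$ covers the intermediate orders, but for $\sigma>1$ you would need derivatives of the Jacobian, and $\gamma_j$ is merely Lipschitz. Consequently the strategy ``flatten, then apply a Seeley-type higher-order reflection'' does not go through on a Lipschitz domain when $\sigma>1$; Seeley's reflection requires $C^k$ boundaries for the relevant $k$. The correct tool for Lipschitz $\W$ and arbitrary $\sigma\ge0$ is Stein's extension operator (Stein, \emph{Singular Integrals and Differentiability Properties of Functions}, Chapter~VI), which avoids flattening altogether by averaging along a regularized interior cone direction and is bounded on every $H^m(\W)$, hence on all $H^\sigma(\W)$ by interpolation. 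Your localization and partition-of-unity framework is fine; it is the local extension step that must be replaced.

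For the range $\sigma\in(0,1)$, which is what the inf-sup argument in the paper actually needs, your sketch is correct and is essentially the construction in \cite[Theorem 5.4]{Hitchhikers}.
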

\begin{remark}
\label{rem:ext_complemento}
During the next sections we need Lemma \ref{extension} for $\W^c$, although we prefer to state it in the more natural fashion, that is, in terms of $\W$ itself.
\end{remark}

\subsection{Fractional Laplacian and regularity of the Dirichlet homogenous problem} 
The operator $(-\Delta)^s$ may be defined either by \eqref{eq:fourier} or \eqref{eq:fraccionarioyo}. The latter is useful to cope with problems involving the operator in a variational framework, and therefore to perform finite element analysis of such problems. On the other hand, definition \eqref{eq:fourier} allows to study the operator from the viewpoint of pseudodifferential calculus. The equivalence between these two definitions can be found, for example, in \cite{Hitchhikers}.
Using the definition \eqref{eq:fourier}, it is easy to prove the following.
\begin{proposition} \label{prop:order} 
For any $s \in \R$, the operator $(-\Delta)^s$ is of order $2s$, that is, $(-\Delta)^s : H^\ell (\rn) \to  H^{\ell-2s} (\rn)$ is continuous for any $\ell \in \R$.
\end{proposition}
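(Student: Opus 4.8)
The plan is to work entirely on the Fourier side, exploiting \eqref{eq:fourier}, since in the frequency variable both the operator and the Sobolev norms become multiplications by explicit functions of $\xi$. I would begin by recalling the equivalent Bessel-potential characterization of the Sobolev scale,
\[
\| u \|_{H^\ell(\rn)}^2 = \int_\rn (1+|\xi|^2)^\ell \, |\mathcal{F}u(\xi)|^2 \, d\xi ,
\]
which agrees (up to norm equivalence) with the Aronszajn-Slobodeckij definition for $0<\ell<1$ and provides the natural extension of the scale to every $\ell\in\R$.

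First I would transform the left-hand side. By \eqref{eq:fourier} one has $\mathcal{F}\big((-\Delta)^s u\big)(\xi) = |\xi|^{2s}\,\mathcal{F}u(\xi)$, so Plancherel's identity gives
\[
\|(-\Delta)^s u\|_{H^{\ell-2s}(\rn)}^2 = \int_\rn (1+|\xi|^2)^{\ell-2s}\, |\xi|^{4s}\, |\mathcal{F}u(\xi)|^2 \, d\xi .
\]
Factoring out $(1+|\xi|^2)^{\ell}$, the desired continuity estimate $\|(-\Delta)^s u\|_{H^{\ell-2s}(\rn)} \le C \,\|u\|_{H^\ell(\rn)}$ reduces to proving that the symbol quotient
\[
m(\xi) = (1+|\xi|^2)^{-s}\, |\xi|^{2s} = \Big( \frac{|\xi|^2}{1+|\xi|^2} \Big)^{s}
\]
is uniformly bounded on $\rn$. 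Indeed, if $\sup_\xi m(\xi)^2 = C^2$, then the integrand above is dominated pointwise by $C^2 (1+|\xi|^2)^{\ell}\,|\mathcal{F}u(\xi)|^2$, and integrating yields the claim.

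The key step is then the elementary monotonicity inequality $|\xi|^2 \le 1+|\xi|^2$: raising both sides to the power $s\ge 0$ preserves it and gives $m(\xi)\le 1$ for all $\xi$, which closes the argument in the range $s\in(0,1)$ that is relevant here. The main obstacle is the low-frequency regime $\xi\to 0$, where the symbol $|\xi|^{2s}$ fails to be smooth, so $(-\Delta)^s$ is not a classical pseudodifferential operator at the origin and the clean bound on $m$ is only available for $s\ge 0$; when $s<0$ the quotient $|\xi|^2/(1+|\xi|^2)\to 0$ as $\xi\to0$ and $m(\xi)$ is no longer bounded, so that case demands the usual care attached to negative orders. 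Since throughout this paper $s\in(0,1)$, the estimate above is all we need.
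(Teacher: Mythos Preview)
The paper does not actually give a proof of this proposition; it only states that the result ``is easy to prove'' using the Fourier definition \eqref{eq:fourier}. Your argument is precisely the natural one implied by that hint: pass to the frequency side, use the Bessel-potential characterization of $H^\ell(\rn)$, and bound the symbol quotient $m(\xi)=\big(|\xi|^2/(1+|\xi|^2)\big)^s$. So your approach coincides with what the paper intends.

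One remark: the proposition is stated for all $s\in\R$, and you correctly observe that for $s<0$ the quotient $m(\xi)$ blows up at the origin, so the bound fails as written. This is a genuine issue with the statement rather than with your proof --- the symbol $|\xi|^{2s}$ is singular at $\xi=0$ when $s<0$, and the mapping property needs additional interpretation in that regime. Since the paper only ever invokes the proposition for $s\in(0,1)$ (in the proofs of Theorems \ref{teo:regularidadDirect} and \ref{teo:regularidad}), your restriction to $s\ge 0$ is harmless in context, and your observation about the negative case is apt.
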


 From the previous proposition, it might be expected that, given a bounded smooth domain $\W$, if $u \in \widetilde H^s(\W)$ satisfies $(-\Delta)^s u = f$ for some $f \in H^\ell (\W)$, then $u \in H^{\ell + 2s}(\W)$. However, this is not the case. Regularity of solutions of problems involving the fractional Laplacian over bounded domains is a delicate issue.
Indeed, consider for instance the homogeneous problem
\begin{equation} \label{eq:homogeneous}
\left\lbrace
  \begin{array}{rl}      
(-\Delta)^s u  = f &  \text{ in } \W, \\
u  =  0  & \text{ in } \W^c.
\end{array}
    \right.
\end{equation}
In \cite{Grubb}, regularity results for \eqref{eq:homogeneous} are stated in terms of 
H\"ormander $\mu-$spaces. These mix the features of supported and restricted Sobolev spaces by means of combining certain pseudodifferential operators with zero-extensions and restriction operators.  We refer to that work for a definition and further details. 
In terms of standard Sobolev spaces, the results therein may be stated as follows (see also \cite{VishikEskin}). 
\begin{proposition}\label{prop:regHr}
		Let $f\in H^r(\Omega)$ for $r\geq -s$ and $u\in \widetilde{H}^s(\W)$ be the solution of the Dirichlet problem
		\eqref{eq:homogeneous}. Then, the following regularity estimate holds 
		$$
			|u|_{H^{s+\alpha}(\rn)} \leq C(n, \alpha) \|f\|_{H^r(\W)}.
		$$
		Here, $\alpha = s+r$ if $s+r < \frac12$ and $\alpha = 
		\frac12 - \eps$ if $s+r \ge \frac12$, with $\eps > 0$ arbitrarily small.
\end{proposition}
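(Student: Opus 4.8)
The plan is to reduce the statement to the regularity theory for the $\mu$-transmission calculus developed by Grubb \cite{Grubb}, reading his conclusions (which are phrased in terms of H\"ormander $\mu$-spaces) back in the scale of standard Sobolev spaces. The structural input is that $(-\Delta)^s$ is a classical pseudodifferential operator of order $2s$, by Proposition \ref{prop:order}, and that it satisfies the $s$-transmission condition relative to $\partial\W$; this is exactly the hypothesis under which Grubb's solvability and regularity results apply, and the verification of the transmission property for $(-\Delta)^s$ is carried out in \cite{Grubb} (see also \cite{VishikEskin}).

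First I would invoke Grubb's a priori estimate for \eqref{eq:homogeneous}. With $f$ in the restricted space $H^r(\W)$ and $u \in \widetilde H^s(\W)$ the solution, his theory places $u$ in the H\"ormander space associated to the transmission exponent $\mu = s$ and to the smoothness index $r + 2s$, together with a bound of the corresponding norm by $C\|f\|_{H^r(\W)}$. The shift by $2s$ is forced by the order of the operator: in the interior $u$ gains $2s$ derivatives over $f$, consistently with Proposition \ref{prop:order}. The nontrivial content is that the boundary obstructs this gain, which is precisely what the $\mu$-space records.

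The core of the argument is then to translate membership in this $\mu$-space into membership in a standard space $\widetilde H^\sigma(\W)$, and the delicate point --- the main obstacle --- is the behaviour at the threshold $\sigma = \regg$. The canonical solution of \eqref{eq:homogeneous} behaves like $\delta(x)^s$ near $\partial\W$, and a direct computation shows $\delta^s \in \widetilde H^\sigma(\W)$ precisely when $\sigma < \regg$; this is the barrier that prevents the naive interior gain from surviving up to the boundary. Grubb's embedding of the transmission space (exponent $\mu = s$, index $r+2s$) into the Sobolev scale yields $u \in \widetilde H^\sigma(\W)$ with $\sigma = \min\{ r + 2s,\ \regg - \eps \}$: when $r + 2s < \regg$, that is $s + r < 1/2$, the full index $\sigma = r + 2s$ is attained and no loss occurs; when $s + r \ge 1/2$ the regularity saturates at $\regg - \eps$ for each fixed $\eps > 0$.

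Finally I would identify $\sigma = s + \alpha$ with the claimed value of $\alpha$ in each regime, and read the seminorm estimate on $\rn$ off the inclusion $\widetilde H^\sigma(\W) \hookrightarrow H^\sigma(\rn)$ afforded by the zero-extension convention, so that $|u|_{H^{s+\alpha}(\rn)} \le C\|f\|_{H^r(\W)}$. The constant inherits a dependence on $n$, on $\alpha$ and, through the domain, on $\W$; in the saturated regime it degenerates as $\eps \to 0$, reflecting the genuine loss of endpoint regularity forced by the $\delta^s$ boundary layer.
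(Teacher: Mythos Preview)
Your proposal is correct in spirit and, in fact, supplies more detail than the paper itself does: the paper does not prove Proposition~\ref{prop:regHr} but simply states it as a translation of Grubb's $\mu$-transmission regularity theory \cite{Grubb} (and Vishik--Eskin \cite{VishikEskin}) into the standard Sobolev scale, referring the reader to those works. Your sketch---invoke the a priori estimate in the H\"ormander $\mu$-space, embed into $\widetilde H^\sigma(\W)$ with the saturation at $\sigma=s+\tfrac12-\eps$ forced by the $\delta^s$ boundary layer, and then read off the seminorm on $\rn$---is exactly the route the paper is alluding to when it writes ``In terms of standard Sobolev spaces, the results therein may be stated as follows.'' There is nothing to compare: you have reconstructed the intended derivation.
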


\begin{remark}
We emphasize that assuming further Sobolev regularity for the right hand side function $f$ does not imply that the solution $u$ will be any smoother than what is given by the previous proposition. 
\end{remark}

\section{Statement of the problem}
Throughout the remaining sections of this work we are going to denote by $V$ the space $V = H^s(\rn)$, 
furnished with its usual norm. The domain $\W$ is assumed to be bounded and smooth and therefore
(due to the latter condition) it is an extension domain for functions
in $H^s(\W)$ (and of course for functions in $H^s(\W^c)$). This fact is used in some parts of the presentation
without further comments. 

Multiplying the first equation in \eqref{eq:dirichletnh} by a suitable test function $v$ and applying \eqref{eq:parts}, we obtain
\begin{equation}
\begin{split}
\frac{C(n,s)}{2} \iint_Q \frac{(u(x)-u(y))(v(x)-v(y))}{|x-y|^{n+2s}} \, dx \, dy & -  \int_{\W^c} v(x) \,  \mathcal{N}_s u(x) \, dx\\
&  = \int_\W f(x) v(x) \, dx .
\end{split}
\label{eq:debil} 
\end{equation}
In order to write a weak formulation for our problem we assume $f\in \widetilde{H}^{-s}(\W)$, $g \in H^s(\W^c)$
and introduce the bilinear and linear forms $a\colon V\times V \to \R,$ $ F\colon V \to \R,$
\[
\begin{aligned}
 a(u,v) & =  \frac{C(n,s)}{2} \iint_Q \frac{(u(x)-u(y))(v(x)-v(y))}{|x-y|^{n+2s}} \, dx \, dy , \\
F(u) & = \int_{\W} f(x) u(x) \, dx ,
\end{aligned} \]
which are needed in the sequel.

\begin{remark} \label{remark:form_a}
The form $a$ satisfies the identity
$$a(u,v) = \frac{C(n,s)}{2} \left( \langle u,v \rangle_{H^s(\rn)} - \langle u,v \rangle_{H^s(\W^c)} \right) \ \forall u, v \in V.$$ 
This, in turn, implies the continuity of $a$ in $V$, that is
 $$|a(u,v)| \le C(n,s) |u|_{H^s(\rn)} |v|_{H^s(\rn)},$$
and the fact that over the set $\widetilde{H}^s(\W)$, $a(v,v)$ coincides with $ \frac{C(n,s)}{2}| v |_{H^s(\rn)}^2.$
\end{remark}

\subsection{Direct formulation} \label{ss:direct}
Our first approach is based on the strong imposition of the Dirichlet condition.
 From \eqref{eq:debil} we obtain at once the weak formulation: find 
$u\in V_g$ such that
\begin{equation}
 \label{eq:cont_direct}
 a(u,v)=F(v) \quad \forall v\in \widetilde H^s(\W),
\end{equation}
where $V_g=\{w\in V \colon w=g \mbox{ in } \W^c\}$. 

The treatment for this formulation is standard. Since $\W^c$ is an extension domain we 
may find $g^E\in V_g$,  $g^E:=E(g)$, such that $\|g^E\|_{V}\le C \|g\|_{H^s(\W^c)}$,
with $C$ depending on $\W$. Using that $a(u,v)$ is continuous and coercive in 
$\widetilde H^s(\W)$ (see Remark \ref{remark:form_a}), existence and uniqueness of a solution
$u_0\in \widetilde H^s(\W)$ of the problem
\[
 a(u_0,v)=F(v)-a(g^E,v) \quad \forall v\in \widetilde H^s(\W),
\]
is guaranteed, thanks to the continuity of the right hand side. Considering $u:=u_0+g^E$ we deduce the following.

\begin{proposition} \label{prop:well_posedness_direct}
Problem \eqref{eq:cont_direct} admits a unique solution $u \in V_g$, and there exists $C>0$ such that the bound
\[
\| u \|_V  \le C \left( \| f \|_{\widetilde{H}^{-s}(\W)} + \| g \|_{H^s(\W^c)} \right)
\]
is satisfied.
\end{proposition}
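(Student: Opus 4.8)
The plan is to lift the boundary datum $g$ and reduce \eqref{eq:cont_direct} to a homogeneous problem on $\widetilde H^s(\W)$ that can be handled by the Lax--Milgram theorem. Since $\W^c$ is an extension domain, Lemma \ref{extension} (applied to $\W^c$, cf. Remark \ref{rem:ext_complemento}) provides $g^E := E(g) \in V$ with $g^E = g$ in $\W^c$ and $\|g^E\|_V \le C \|g\|_{H^s(\W^c)}$; in particular $g^E \in V_g$. Writing the unknown as $u = u_0 + g^E$, the membership $u \in V_g$ is equivalent to $u_0 \in \widetilde H^s(\W)$, and \eqref{eq:cont_direct} becomes the problem of finding $u_0 \in \widetilde H^s(\W)$ such that $a(u_0,v) = F(v) - a(g^E,v)$ for all $v \in \widetilde H^s(\W)$.

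Next I would check the three hypotheses of Lax--Milgram on the Hilbert space $\widetilde H^s(\W)$, which is a closed subspace of $V$. Continuity of $a$ is immediate from Remark \ref{remark:form_a}. For coercivity, the same remark gives $a(v,v) = \tfrac{C(n,s)}{2}\,|v|_{H^s(\rn)}^2$ for $v \in \widetilde H^s(\W)$; combining this with the Poincar\'e inequality \eqref{eq:poincare}, which yields the equivalence of $|\cdot|_{H^s(\rn)}$ and $\|\cdot\|_V$ on $\widetilde H^s(\W)$ (Remark \ref{rem:poincare}), one obtains $a(v,v) \ge c\,\|v\|_V^2$. Finally, the right-hand side $\ell(v) := F(v) - a(g^E,v)$ is a bounded functional: since $\|v\|_{H^s(\W)} \le \|v\|_V$ for $v \in \widetilde H^s(\W)$, the duality pairing gives $|F(v)| \le \|f\|_{\widetilde H^{-s}(\W)}\,\|v\|_V$, while continuity of $a$ together with the bound on $g^E$ gives $|a(g^E,v)| \le C\,\|g\|_{H^s(\W^c)}\,\|v\|_V$; hence $\|\ell\|_{(\widetilde H^s(\W))'} \le C\big(\|f\|_{\widetilde H^{-s}(\W)} + \|g\|_{H^s(\W^c)}\big)$.

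Lax--Milgram then delivers a unique $u_0 \in \widetilde H^s(\W)$ with $\|u_0\|_V \le c^{-1}\|\ell\|_{(\widetilde H^s(\W))'} \le C\big(\|f\|_{\widetilde H^{-s}(\W)} + \|g\|_{H^s(\W^c)}\big)$, and $u = u_0 + g^E \in V_g$ solves \eqref{eq:cont_direct}. Uniqueness of $u$ in $V_g$ follows since the difference of two solutions lies in $\widetilde H^s(\W)$ and is annihilated by the coercive form $a$. The stability bound is then obtained from the triangle inequality $\|u\|_V \le \|u_0\|_V + \|g^E\|_V$ and the two estimates above.

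I do not expect a genuine obstacle here: the statement is a standard lifting-plus-Lax--Milgram argument, and all the analytic substance has already been isolated in the preliminaries, principally the coercivity, which rests on the seminorm/norm equivalence on $\widetilde H^s(\W)$. The only point demanding a little care is that this equivalence (equivalently, Poincar\'e) be available for \emph{every} $s \in (0,1)$, including the borderline $s = 1/2$; fortunately \eqref{eq:poincare} is stated for all $s > 0$, so coercivity holds uniformly and no special treatment of $s = 1/2$ is needed.
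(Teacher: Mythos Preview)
Your proposal is correct and follows essentially the same route as the paper: lift $g$ to $g^E\in V_g$ via the extension operator from Lemma~\ref{extension}, reduce to the homogeneous problem $a(u_0,v)=F(v)-a(g^E,v)$ on $\widetilde H^s(\W)$, and invoke Lax--Milgram using the continuity and coercivity of $a$ from Remark~\ref{remark:form_a} together with the Poincar\'e inequality. The only difference is that you spell out the verification of the Lax--Milgram hypotheses and the stability bound in more detail than the paper, which simply states that the argument is standard.
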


\subsection{Mixed formulation}
The idea behind this formulation dates back to
Babu\v ska's  seminal paper \cite{Babuska}.  
We  define the set $\Lambda = (H^s(\W^c))' = \widetilde{H}^{-s}(\W^c)$, furnished with its usual norm,
and introduce the bilinear and linear forms $ b\colon V\times \Lambda \to \R, G: \Lambda \to \R$,
\begin{equation*}
 b(u, \mu) = \int_{\W^c} u(x) \,  \mu(x) \, dx ,
\end{equation*}
and 
\begin{align*}
 \ G(\lambda) = \int_{\W^c} g(x) \, \lambda(x) \, dx,
\end{align*} 
which are obviously  continuous.

The mixed formulation of \eqref{eq:dirichletnh} reads: find $(u,\lambda) \in V\times\Lambda$ such that
\begin{equation}\label{eq:cont}
\begin{split}
a(u,v) - b(v,\lambda) & = F(v) \quad \forall v \in V , \\
 b(u,\mu) & = G(\mu) \quad \forall \mu \in \Lambda .
\end{split}
\end{equation}

\begin{remark} \label{rem:lambda}
As can be seen from the above considerations, the Lagrange multiplier $\lambda$, which is associated to the restriction $u = g $ in $\W^c$, 
 coincides with the nonlocal derivative $\mathcal{N}_s u$ in that set. In order to simplify the notation, in the following we will refer to it as $\lambda$.
\end{remark}

Notice that the kernel of the bilinear form $b$ agrees with $\widetilde{H}^s(\W)$,
that is,
\begin{equation} \label{eq:kernel}
K = \{ v \in V \colon b(v,\mu) = 0 \ \forall \mu \in \Lambda \} = \widetilde{H}^s(\W).
\end{equation}
Recalling Remarks \ref{rem:poincare} and \ref{remark:form_a}, it follows that 
\begin{equation}
 \| v \|_V^2 \le C | v |_{H^s(\rn)}^2 = 
C a(v,v)  \quad \forall v \in K.
\label{eq:ellipticity}
\end{equation}

We are now in a position to prove the inf-sup condition for the form $b$.

\begin{lemma}
For all $\mu \in \Lambda$, it holds that
\begin{equation}\label{eq:infsup}
\sup_{u \in V} \frac{b(u,\mu)}{\| u \|_V} \ge \frac1C \| \mu \|_{\Lambda},
\end{equation}
where $C> 0$ is the constant from Lemma \ref{extension}.
\end{lemma}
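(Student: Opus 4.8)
The plan is to exploit that the functional $b(\cdot,\mu)$ only sees the restriction to $\W^c$ of its first argument, together with the duality characterization of $\|\mu\|_\Lambda$ and the extension operator of Lemma \ref{extension}. Since $\Lambda = (H^s(\W^c))'$ with $L^2(\W^c)$ as pivot space, its dual norm is
\[
\|\mu\|_\Lambda = \sup_{0 \ne w \in H^s(\W^c)} \frac{\int_{\W^c} w(x)\,\mu(x)\,dx}{\|w\|_{H^s(\W^c)}}.
\]
The point is that, for each admissible $w$, a suitable extension of $w$ to all of $\rn$ realizes the pairing $\int_{\W^c} w\,\mu$ as a value of $b$, while keeping the $V$-norm controlled by $\|w\|_{H^s(\W^c)}$.

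Concretely, I would fix $\mu \in \Lambda$ and an arbitrary $w \in H^s(\W^c)$ with $\int_{\W^c} w\,\mu > 0$. Invoking the extension operator for the complement $\W^c$ (see Remark \ref{rem:ext_complemento}), I set $u = E(w) \in H^s(\rn) = V$, so that $u = w$ on $\W^c$ and, by Lemma \ref{extension}, $\|u\|_V \le C \|w\|_{H^s(\W^c)}$ with $C$ the extension constant. Since $b(u,\mu) = \int_{\W^c} u\,\mu = \int_{\W^c} w\,\mu$ depends only on the values of $u$ on $\W^c$, this yields
\[
\frac{b(u,\mu)}{\|u\|_V} \ge \frac{\int_{\W^c} w(x)\,\mu(x)\,dx}{C\,\|w\|_{H^s(\W^c)}}.
\]

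Finally, I would take the supremum over all such $w$. As every $u = E(w)$ lies in $V$, the supremum over $u \in V$ on the left dominates, while the supremum of the right-hand side equals $\frac1C \|\mu\|_\Lambda$ by the dual-norm formula above; this is exactly \eqref{eq:infsup}. The argument is essentially routine once these ingredients are in place. The only point requiring care is to observe that $b(u,\mu)$ depends solely on the trace $u|_{\W^c}$, so that extending $w$ does not alter the pairing, and that it is the extension operator for $\W^c$ (rather than for $\W$) that must be used. I do not anticipate any serious obstacle beyond this bookkeeping with the duality pairing.
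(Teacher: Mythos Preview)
Your proposal is correct and follows essentially the same route as the paper: both arguments express $\|\mu\|_\Lambda$ via duality over $H^s(\W^c)$, extend each test function $w$ to $Ew\in V$ using the extension operator for $\W^c$ (Remark~\ref{rem:ext_complemento}), observe that $b(Ew,\mu)=b(w,\mu)$, and then compare with the supremum over $V$ using $\|Ew\|_V\le C\|w\|_{H^s(\W^c)}$. The paper compresses this into a single chain of inequalities, but the content is identical.
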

\begin{proof}
Let $\mu \in \Lambda$. Recalling that $\Lambda = (H^s(\W^c))'$ and taking into account the extension operator given by Lemma \ref{extension}, we have
\[
\| \mu \|_\Lambda = \sup_{v\in H^s(\W^c)} \frac{b(v, \mu)}{\| v \|_{H^s(\W^c)}} \le
C \sup_{v\in H^s(\W^c)} \frac{b(Ev, \mu)}{\| Ev \|_{V}} \le 
C \sup_{u \in V} \frac{b(u, \mu)}{\|  u \|_{V}} .
\]
\end{proof}

Due to the ellipticity of $a$ on the kernel of $b$ \eqref{eq:ellipticity} and the inf-sup condition \eqref{eq:infsup}, we deduce the well-posedness of the continuous problem by means of the Babu\v{s}ka-Brezzi theory \cite{BoffiBrezziFortin}.
\begin{proposition} \label{prop:well_posedness}
Problem \eqref{eq:cont} admits a unique solution $(u,\lambda) \in V\times\Lambda$, and there exists $C>0$ such that the bound
\[
\| u \|_V + \| \lambda \|_\Lambda \le C \left( \| f \|_{\widetilde{H}^{-s}(\W)} + \| g \|_{H^s(\W^c)} \right)
\]
is satisfied.
\end{proposition}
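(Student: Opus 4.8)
The plan is to invoke the Babu\v{s}ka-Brezzi theory for saddle-point problems, since the structural hypotheses have essentially all been assembled already. First I would verify continuity of the two right-hand side functionals. Because $f \in \widetilde{H}^{-s}(\W)$ and the duality pairing with $H^s(\W)$ is realized through the $L^2(\W)$ inner product, one has $|F(v)| \le \|f\|_{\widetilde{H}^{-s}(\W)} \, \|v\|_{H^s(\W)} \le \|f\|_{\widetilde{H}^{-s}(\W)} \, \|v\|_V$; similarly, since $g \in H^s(\W^c)$ and $\Lambda = (H^s(\W^c))'$, we get $|G(\mu)| \le \|g\|_{H^s(\W^c)} \, \|\mu\|_\Lambda$. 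The form $b$ is continuous by its definition, and $a$ is continuous on $V \times V$ by Remark \ref{remark:form_a}.

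Next I would check the two genuinely structural conditions of Brezzi's theorem: coercivity of $a$ on the kernel $K$ of $b$, and the inf-sup condition for $b$. Both are in hand. The characterization \eqref{eq:kernel} identifies $K = \widetilde{H}^s(\W)$, and on this set \eqref{eq:ellipticity} yields $a(v,v) \ge \tfrac1C \|v\|_V^2$, so $a$ is elliptic on $K$; meanwhile the inf-sup estimate \eqref{eq:infsup} provides the compatibility of $b$. With these four ingredients, the abstract saddle-point theory \cite{BoffiBrezziFortin} guarantees a unique solution $(u,\lambda) \in V \times \Lambda$ together with an a priori bound of the form $\|u\|_V + \|\lambda\|_\Lambda \le C(\|F\|_{V'} + \|G\|_{\Lambda'})$, where $C$ depends only on the continuity constant of $a$, its coercivity constant on $K$, and the inf-sup constant. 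Combining this with the continuity estimates for $F$ and $G$ recorded above produces exactly the stated bound in terms of $\|f\|_{\widetilde{H}^{-s}(\W)}$ and $\|g\|_{H^s(\W^c)}$.

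The point deserving care — and the reason the Lax-Milgram argument of the direct formulation in Section \ref{ss:direct} does not transfer here — is that $a$ is \emph{not} coercive on all of $V$. Indeed, by Remark \ref{remark:form_a} one has $a(v,v) = \tfrac{C(n,s)}{2}\big(|v|_{H^s(\rn)}^2 - |v|_{H^s(\W^c)}^2\big)$, and the subtracted complement contribution destroys global ellipticity. Thus the full saddle-point framework is genuinely required rather than a single coercive variational problem, and the role of the kernel characterization \eqref{eq:kernel} is precisely to confine the defect to the exterior, where support in $\overline{\W}$ together with the Poincar\'e inequality restores control. Once this subtlety is acknowledged, no serious obstacle remains: the entire proof reduces to citing \cite{BoffiBrezziFortin} with the verified hypotheses, so the work has effectively already been done in the preceding lemmas and remarks.
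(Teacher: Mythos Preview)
Your proposal is correct and follows exactly the approach of the paper: the proposition is stated as an immediate consequence of the Babu\v{s}ka-Brezzi theory \cite{BoffiBrezziFortin}, invoking the ellipticity of $a$ on the kernel \eqref{eq:ellipticity} and the inf-sup condition \eqref{eq:infsup}. Your additional remarks on the continuity of $F$ and $G$ and on the failure of global coercivity of $a$ are accurate elaborations of points the paper leaves implicit.
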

\begin{remark}
\label{rem:igualesu}
Considering test functions $v\in K$, the first equation of \eqref{eq:cont} implies that $u$ solves  \eqref{eq:cont_direct}, while the second equation of  \eqref{eq:cont} enforces the condition $u\in V_g$. 
\end{remark}


\section{Regularity of solutions} \label{sec:regularity}

 Since the maximum gain of regularity for solutions of the homogeneous problem is ``almost'' half a derivative, from this point on we assume 
 $f \in H^{1/2-s}(\W)$. Moreover, we require the Dirichlet condition $g$ to belong to $H^{s+1/2}(\W^c)$.

As described in \S\ref{ss:direct}, we consider an extension $g^E \in {H^{s+1/2}(\rn)}$ 
and consider the homogeneous problem 
\eqref{eq:homogeneous} with right hand side function equal to $f - (-\Delta)^s g^E$:
\begin{equation*}  
\left\lbrace
  \begin{array}{rll}      
(-\Delta)^s u_0  & = f - (-\Delta)^s g^E  & \text{in } \W, \\
{u_0} & =  0  & \text{in } \W^c.
\end{array}
    \right.
\end{equation*}

Due to Proposition \ref{prop:order}, it follows that $(-\Delta)^s g^E \in H^{{1/2-s}}(\rn)$, with
\[
\| (-\Delta)^s g^E \|_{ H^{{1/2-s}}(\rn)} \le C \| g^E \|_{ H^{{s+1/2}}(\rn)} \le C \| g \|_{ H^{{s+1/2}}(\W^c)},
\]
so that the right hand side function $f - (-\Delta)^s g^E$ belongs to $H^{{1/2-s}}(\W)$. Applying Proposition \ref{prop:regHr} (see also \cite{Grubb, VishikEskin}), we obtain that the solution $u_0\in \widetilde H^{s+1/2-\eps}(\W)$ for $\eps>0$, with
\[
\| u_0 \|_{H^{s+1/2-\eps}(\rn)} \le C{(\eps)} \left( \| f \|_{H^{{1/2-s}}(\W)}  + \| (-\Delta)^s g^E \|_{H^{{1/2-s}}(\W)} \right).
\]
Moreover, as the solution of \eqref{eq:dirichletnh} is given by $u = u_0 + g^E$, we deduce that $u \in H^{s+1/2-\eps}(\rn)$, and
\begin{equation}
\| u \|_{H^{s+1/2-\eps}(\rn)} \le C{(\eps)} \left( \| f \|_{H^{{1/2-s}}(\W)}  + \| g \|_{H^{{s+1/2}}(\W^c)} \right).
\label{eq:reg_u}
\end{equation}
We have proved the regularity of solutions of \eqref{eq:dirichletnh}.

\begin{theorem} \label{teo:regularidadDirect} Let $f \in H^{{1/2-s}}(\W)$ and let $g \in H^{{s+1/2}}(\W^c)$.
Let $u \in H^s(\rn)$ be the solution of \eqref{eq:dirichletnh}. Then, for all $\eps>0,$ $u \in H^{s+1/2-\eps}(\rn)$ and 
there exists $C{=C(\eps)}>0$ such that
\begin{equation*} 
\| u \|_{H^{s+1/2-\eps}(\rn)}  \le \\
  C \left( \| f \|_{H^{{1/2-s}}(\W)}  + \| g \|_{H^{{s+1/2}}(\W^c)} \right). 
\end{equation*}
\end{theorem}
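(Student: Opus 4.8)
The plan is to reduce the nonhomogeneous problem \eqref{eq:dirichletnh} to the homogeneous one \eqref{eq:homogeneous}, for which sharp regularity is already available through Proposition \ref{prop:regHr}, by lifting the exterior datum $g$. First I would use that $\W^c$ is an extension domain (Lemma \ref{extension} applied to $\W^c$, cf.\ Remark \ref{rem:ext_complemento}) to choose an extension $g^E = E(g) \in H^{s+1/2}(\rn)$ with $\|g^E\|_{H^{s+1/2}(\rn)} \le C \|g\|_{H^{s+1/2}(\W^c)}$ and, crucially, $g^E = g$ on $\W^c$. Setting $u_0 := u - g^E$, the exterior condition $u = g$ in $\W^c$ forces $u_0 = 0$ there; since also $u_0 \in H^s(\rn)$, we get $u_0 \in \widetilde{H}^s(\W)$, and $u_0$ solves \eqref{eq:homogeneous} with right hand side $f - (-\Delta)^s g^E$.

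Next I would check that this modified right hand side lies in $H^{1/2-s}(\W)$. The term $f$ is there by hypothesis. For the correction term I would invoke Proposition \ref{prop:order} with $\ell = s+1/2$: since $(-\Delta)^s$ maps $H^{s+1/2}(\rn)$ continuously into $H^{(s+1/2)-2s}(\rn) = H^{1/2-s}(\rn)$, the restriction of $(-\Delta)^s g^E$ to $\W$ lies in $H^{1/2-s}(\W)$ with norm bounded by $C\|g^E\|_{H^{s+1/2}(\rn)} \le C\|g\|_{H^{s+1/2}(\W^c)}$. Hence $f - (-\Delta)^s g^E \in H^{1/2-s}(\W)$ with the expected control.

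With this in hand I would apply Proposition \ref{prop:regHr} to $u_0$ with $r = 1/2 - s$. Then $s + r = 1/2 \ge 1/2$, placing us in the borderline regime $\alpha = 1/2 - \eps$, so that $u_0 \in H^{s+1/2-\eps}(\rn)$ for every $\eps > 0$ with the corresponding seminorm bound. A small technical point is that the proposition controls only the seminorm $|u_0|_{H^{s+1/2-\eps}(\rn)}$; since $u_0$ is supported in $\overline{\W}$, the Poincar\'e inequality \eqref{eq:poincare} (together with Corollary \ref{cor:norma}) upgrades this to the full norm $\|u_0\|_{H^{s+1/2-\eps}(\rn)}$. Finally, writing $u = u_0 + g^E$ and using $g^E \in H^{s+1/2}(\rn) \hookrightarrow H^{s+1/2-\eps}(\rn)$, I would add the two estimates to conclude.

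The main obstacle to keep in mind is that the $\eps$-loss is genuinely unavoidable and enters precisely here: because the data force us exactly onto the threshold $s + r = 1/2$, Proposition \ref{prop:regHr} cannot deliver the full $s+1/2$ derivatives, reflecting the intrinsic boundary behaviour of $(-\Delta)^s$ on bounded domains. One should additionally take some care around $s = 1/2$, where seminorm and norm are no longer interchangeable for compactly supported functions (cf.\ Remark \ref{remark:un_medio}); but since $g^E$ is controlled in the full $H^{s+1/2}$ norm, this does not affect the final estimate.
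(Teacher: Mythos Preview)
Your argument is correct and follows essentially the same route as the paper: lift $g$ to $g^E\in H^{s+1/2}(\rn)$ via Lemma~\ref{extension}, reduce to the homogeneous problem for $u_0=u-g^E$ with right hand side $f-(-\Delta)^s g^E\in H^{1/2-s}(\W)$ (using Proposition~\ref{prop:order}), and apply Proposition~\ref{prop:regHr} at the threshold $s+r=1/2$. Your additional care about passing from the seminorm to the full norm of $u_0$ is a detail the paper glosses over; note that the Poincar\'e inequality \eqref{eq:poincare} alone already does this job, while Corollary~\ref{cor:norma} is not quite the relevant statement here.
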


Regularity of the nonlocal normal derivative of the solution is deduced under an additional compatibility hypothesis on the Dirichlet condition. Namely, we assume that $(-\Delta)^s_{\W^c} g \in H^{{1/2-s}}(\W^c)$, where $(-\Delta)^s_{\W^c}$ denotes the regional fractional Laplacian operator \eqref{eq:regional} in $\W^c$. 

\begin{theorem} \label{teo:regularidad} Assume the hypotheses of Theorem \ref{teo:regularidadDirect}, and in addition let $g$ be such that $(-\Delta)^s_{\W^c} g \in H^{{1/2-s}}(\W^c)$. Then, for all $\eps>0,$ $u \in H^{s+1/2-\eps}(\rn)$,  and its nonlocal normal derivative $\lambda \in H^{-s+1/2-\eps}(\W^c)$. Moreover,  there exists $C{=C(\eps)}>0$ such that
\begin{equation*} \begin{split}
\| u \|_{H^{s+1/2-\eps}(\rn)} & + \| \lambda \|_{H^{-s+1/2-\eps}(\W^c)} \le 
C \, \Sigma_{f,g},
\end{split} \end{equation*}
where
\begin{equation} \label{eq:def_sigma}
\Sigma_{f,g} =   \| f \|_{H^{{1/2-s}}(\W)}  + \| g \|_{H^{{s+1/2}}(\W^c)} + \| (-\Delta)_{\W^c}^s g \|_{H^{{1/2-s}}(\W^c)}.
\end{equation}
\end{theorem}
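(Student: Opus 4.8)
The plan is to observe that the regularity of $u$, together with its estimate, is exactly the content of Theorem~\ref{teo:regularidadDirect}, so the entire task reduces to controlling the nonlocal normal derivative $\lambda = \mathcal{N}_s u$ on $\W^c$. The guiding idea is to express $\lambda$ in terms of quantities whose regularity we already command, by decomposing the \emph{global} fractional Laplacian of $u$ into its contributions coming from $\W$ and from $\W^c$.

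First I would record a pointwise decomposition. For $x \in \W^c$ the singular point $y = x$ lies in $\W^c$, so starting from $(-\Delta)^s u(x) = C(n,s)\,\mathrm{P.V.}\!\int_\rn \frac{u(x)-u(y)}{|x-y|^{n+2s}}\,dy$ and splitting the domain of integration into $\W$ and $\W^c$, the integral over $\W$ is nonsingular (since $d(x,\W)>0$ for every $x\in\W^c$) and coincides with $\mathcal{N}_s u(x)$, while the integral over $\W^c$ requires the principal value and, because $u=g$ there, equals a constant multiple of the regional fractional Laplacian \eqref{eq:regional} applied to $g$. This yields, first for smooth data and then by density in the Sobolev classes at hand,
\[
\lambda(x) = (-\Delta)^s u(x) - \frac{C(n,s)}{C(n,s,\W^c)}\,(-\Delta)^s_{\W^c} g(x) \qquad \text{in } \W^c.
\]

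With this identity in hand, I would estimate the two terms separately in $H^{\regf-\eps}(\W^c)$. For the first term, Theorem~\ref{teo:regularidadDirect} gives $u \in H^{\regg-\eps}(\rn)$ with $\|u\|_{H^{\regg-\eps}(\rn)} \le C\,\Sigma_{f,g}$; since $(-\Delta)^s$ is of order $2s$ by Proposition~\ref{prop:order}, we obtain $(-\Delta)^s u \in H^{\regf-\eps}(\rn)$ with the same bound, and restriction to $\W^c$ is continuous, so $\|(-\Delta)^s u\|_{H^{\regf-\eps}(\W^c)} \le C\,\Sigma_{f,g}$. For the second term, the compatibility hypothesis states precisely that $(-\Delta)^s_{\W^c} g \in H^{\regf}(\W^c) \hookrightarrow H^{\regf-\eps}(\W^c)$, with norm controlled by $\Sigma_{f,g}$. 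Adding these via the triangle inequality gives $\|\lambda\|_{H^{\regf-\eps}(\W^c)} \le C\,\Sigma_{f,g}$, which together with the bound for $u$ completes the proof.

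The step I expect to cause the most trouble is the rigorous justification of the decomposition identity and of the ensuing restriction. Making the splitting of the principal-value integral precise requires approximating $u$ (equivalently $g$ on $\W^c$) by smooth functions and passing to the limit in the appropriate norms, which is delicate because the regional term need not be classically defined at the regularity available for $g$ — this is exactly why the compatibility assumption on $(-\Delta)^s_{\W^c} g$ is imposed. A related subtlety is that for $s>1/2$ the target space $H^{\regf-\eps}(\W^c)$ has negative order, so the continuity of the restriction $H^{\regf-\eps}(\rn)\to H^{\regf-\eps}(\W^c)$ has to be read as the dual of the continuous zero-extension $\widetilde{H}^{-(\regf-\eps)}(\W^c) \to H^{-(\regf-\eps)}(\rn)$.
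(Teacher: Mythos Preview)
Your proposal is correct and follows essentially the same route as the paper: both rely on the decomposition $\lambda = (-\Delta)^s u - c\,(-\Delta)^s_{\W^c} g$ in $\W^c$, bound the first term via Proposition~\ref{prop:order} applied to the $H^{s+1/2-\eps}$-regular $u$, and invoke the compatibility hypothesis for the second. Your version is slightly more careful about the normalization constant and about reading the restriction to $\W^c$ by duality when $s>1/2$; the paper makes the latter explicit by testing against $v\in\widetilde H^{s-1/2+\eps}(\W^c)$, which amounts to the same thing.
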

\begin{proof}
We only need to prove that $\lambda \in {H}^{-s+1/2-\eps}(\W^c)$. 
Let $v \in \widetilde{H}^{s-1/2+\eps}(\W^c)$. Since $\lambda = (-\Delta)^s u - (-\Delta)_{\W^c}^s g$ in $\W^c$, we write
\[
\left| \int_{\W^c} \lambda v \right| \le \left( \| (-\Delta)^s u \|_{H^{-s+1/2-\eps}(\W^c)}  + \| (-\Delta)_{\W^c}^s g \|_{H^{-s+1/2-\eps}(\W^c)} \right) \| v \|_{\widetilde H^{s-1/2+\eps}(\W^c)} . 
\]
 
Using Proposition \ref{prop:order}, we deduce
$$
\left| \int_{\W^c} \lambda v \right| \le C  \left( \| u \|_{H^{s+1/2-\eps}(\rn)}  + \| (-\Delta)_{\W^c}^s g \|_{H^{-s+1/2-\eps}(\W^c)} \right) \| v \|_{\widetilde H^{s-1/2+\eps}(\W^c)}
$$
and taking supremum in $v$ we conclude that $\lambda \in H^{-s+1/2-\eps}(\W^c)$,
with
\begin{equation*} \begin{split}
 \| \lambda & \|_{H^{-s+1/2-\eps}(\W^c)} \le 
C \, \Sigma_{f,g},
\end{split} \end{equation*}
where we have used \eqref{eq:reg_u} in the last inequality and the notation \eqref{eq:def_sigma}.
\end{proof}

\begin{remark}
In view of Proposition \ref{prop:order}, it might seem true that for every $\ell \in \R$ and $g \in H^\ell(\W^c)$ it holds that $(-\Delta)_{\W^c}^s g \in H^{\ell-2s}(\W^c)$, which in turn would imply that the hypothesis $(-\Delta)^s_{\W^c} g \in H^{{1/2-s}}(\W^c)$ is superfluous. However, we have not been able neither to prove nor to disprove this claim. As an illustration on what type of additional hypotheses are utilized to ensure this type of behavior of the restricted fractional Laplacian, we refer the reader to \cite[Lemma 5.6]{warma2015}.
\end{remark}

Naturally, the homogeneous case $g\equiv0$ satisfies the assumptions of Theorem \ref{teo:regularidad}. 

\begin{corollary} \label{cor:regularidad}
Let $\W$ be a smooth domain and $f \in H^{{1/2-s}}(\W)$. Let $u \in \widetilde H^s(\W)$ be the solution of \eqref{eq:homogeneous} 
and $\lambda$ be its nonlocal normal derivative. Then, for all $\eps>0,$ it holds that $\lambda \in H^{-s+1/2-\eps}(\W^c)$ and
\begin{equation*}
\| \lambda \|_{H^{-s+1/2-\eps}(\W^c)} \le C(n,s,\W,\eps)  \| f \|_{H^{{1/2-s}}(\W)} . 
\end{equation*}
\end{corollary}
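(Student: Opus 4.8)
The plan is to recognize that the homogeneous problem \eqref{eq:homogeneous} is precisely the nonhomogeneous problem \eqref{eq:dirichletnh} with $g \equiv 0$, and then to invoke Theorem \ref{teo:regularidad}. First I would check that $g \equiv 0$ satisfies all of its hypotheses. The assumption $f \in H^{1/2-s}(\W)$ is already in force, and $0 \in H^{s+1/2}(\W^c)$ trivially. For the extra compatibility condition, note that the regional fractional Laplacian \eqref{eq:regional} is defined through a principal-value integral of differences $u(x)-u(y)$; applied to the zero function it returns $(-\Delta)_{\W^c}^s 0 = 0$, which certainly lies in $H^{1/2-s}(\W^c)$. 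Thus the compatibility hypothesis is met automatically, and this is in fact the content of the remark preceding the corollary.

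Having verified the hypotheses, I would apply Theorem \ref{teo:regularidad} with this choice of data and track how the quantity $\Sigma_{f,g}$ from \eqref{eq:def_sigma} simplifies. The two $g$-dependent contributions, $\| g \|_{H^{s+1/2}(\W^c)}$ and $\| (-\Delta)_{\W^c}^s g \|_{H^{1/2-s}(\W^c)}$, both vanish because $g \equiv 0$, leaving $\Sigma_{f,0} = \| f \|_{H^{1/2-s}(\W)}$. The conclusion of the theorem then reads $\lambda \in H^{-s+1/2-\eps}(\W^c)$ together with the bound $\| \lambda \|_{H^{-s+1/2-\eps}(\W^c)} \le C \| f \|_{H^{1/2-s}(\W)}$, which is exactly the claimed estimate.

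As an alternative that avoids citing the general theorem, one could reproduce its argument in the simplified setting. When $g \equiv 0$ the identity $\lambda = (-\Delta)^s u - (-\Delta)_{\W^c}^s g$ used in that proof reduces to $\lambda = (-\Delta)^s u$ on $\W^c$. A duality argument testing against $v \in \widetilde H^{s-1/2+\eps}(\W^c)$, combined with the mapping property of Proposition \ref{prop:order} and the regularity estimate $\| u \|_{H^{s+1/2-\eps}(\rn)} \le C \| f \|_{H^{1/2-s}(\W)}$ supplied by Theorem \ref{teo:regularidadDirect} (itself a consequence of Proposition \ref{prop:regHr}), yields the bound directly.

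Because the statement is a direct specialization of an already-established result, there is essentially no genuine obstacle here; the single point deserving (trivial) attention is confirming that the compatibility hypothesis on $g$ is satisfied by the zero function, which holds since the regional operator annihilates it.
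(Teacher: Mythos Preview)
Your proposal is correct and matches the paper's approach exactly: the paper simply observes that the homogeneous case $g\equiv 0$ satisfies the hypotheses of Theorem~\ref{teo:regularidad} (in particular the compatibility condition, since $(-\Delta)^s_{\W^c}0=0$) and states the corollary without further proof. Your additional remark about reproducing the argument directly via $\lambda = (-\Delta)^s u$ on $\W^c$ is also correct but unnecessary.
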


\begin{remark} \label{rem:singular} We illustrate the sharpness of the regularity estimate for the nonlocal derivative from Theorem \ref{teo:regularidad} (or from Corollary \ref{cor:regularidad}) with the following simple example. Let $\W = (-1,1)$ and consider the problem 
\[
\left \lbrace
\begin{array}{rl}
(-\Delta)^s u = 1 & \text{ in } (-1,1), \\
u = 0 & \text{ in } \R\setminus(-1,1),
\end{array}
\right.
\]
whose solution is given by $u(x) = c(s) (1 - x^2)_+^s$ for some constant $c(s) > 0$ (see, for example, \cite{Getoor}). 
We focus on the behavior of $\mathcal{N}_s u$ near the boundary of $\W$; for instance, let $x \in (1,2)$. Basic manipulations allow to derive the bound
\[ 
\left| \mathcal{N}_s u (x) \right| > \frac{C(s)}{(x-1)^s}.
\]
Next, given $\alpha \in (0,1)$, observe that $(x-1)^\alpha \in H^\ell(1,2)$ if and only if $\ell < \alpha + 1/2.$ Thus, by duality, we conclude that $\mathcal N_su \notin H^{{1/2-s}}(1,2)$.
 
The reduced regularity of the nonlocal normal derivative near the boundary does not happen as an exception but is what should be expected in general. Indeed, following \cite{ABBM}, let $f:(-1,1)\to\R$ be a function such that its coefficients $f_j$ (in the expansion with respect to the basis of the so-called Gegenbauer polynomials $\left\{ C^{(s+1/2)}_j \right\}$) satisfy either
\[\sum_{j=0}^\infty \frac{f_j \, j !}{\Gamma(2s+j+1)} \, C_j^{(s+1/2)}(-1) \neq 0 \, 
\text{ or } \sum_{j=0}^\infty \frac{f_j  \,  j !}{\Gamma(2s+j+1)} \, C_j^{(s+1/2)}(1) \neq 0. 
\]
Then, the solution to \eqref{eq:homogeneous} is given by $u(x) = (1-x^2)_+^s \phi(x)$, where $\phi$ is a smooth function that does not vanish as $|x| \to 1$ (cf. \cite[Theorem 3.14]{ABBM}). Therefore, the same argument as above applies: the nonlocal derivative of the solution of the homogeneous Dirichlet problem belongs to $H^{-s+1/2-\eps}(\R \setminus (-1,1))$, and the $\eps > 0$ cannot be removed.

We remark that in the limit $s\to 1$ the nonlocal normal derivatives concentrate mass towards the boundary of the domain, so that \cite{dipierro2014nonlocal}
\[
\lim_{s\to 1} \int_{\W^c} \mathcal{N}_s u \, v = \int_{\pp\W} \frac{\pp u}{\pp n} \, v \quad \forall u,v \in C^2_0(\rn).
\]
This estimate also illustrates the singular behavior of $\mathcal{N}_s u$ near the boundary of $\W$.
\end{remark}

\section{Finite Element approximations} \label{sec:fe_approximations}
In this section we begin the study of finite element approximations to problem \eqref{eq:cont}. Here we assume the Dirichlet datum $g$ to have bounded support.
This assumption allows to simplify the error analysis of the numerical method we propose in this work, but it is not necessary.  In the next section, estimates for data not satisfying such hypothesis are deduced.

\subsection{Finite element spaces}
Given ${H>1}$ big enough, we denote by $\W_H$ a domain containing $\W$ and such that
\begin{equation}
c H \le \min_{x\in \partial \W, \, y \in \partial \W_H} d(x,y) \le \max_{x\in \partial \W, \, y \in \partial \W_H} d(x,y) \le C H, 
\label{eq:def_OmegaH}
\end{equation}
where $c, C$ are constants independent of $H$. 
We set conforming simplicial meshes on $\W$ and $\W_H \setminus \W$, in such a way that the resulting partition of $\W_H$ remains admissible.  Moreover, to simplify our analysis, we assume the family of meshes to be globally quasi-uniform.

\begin{remark}
The parameter $H$ depends on the mesh size $h$ in such a way that as $h$ goes to $0$, $H$ tends to infinity.  
The purpose of $\W_H$ is twofold: in first place, to provide a domain in which to implement the finite element approximations. 
In second place, the behavior of solutions may be controlled in the complement of $\W_H$. 
Assuming $g$ to have bounded support implies that, for $h$ small enough, the domain $\W_H$ contains the support of the Dirichlet datum $g$.
Moreover, since there is no reason to expect $\lambda$ to be compactly supported, taking $H$ depending adequately on $h$ ensures that 
the decay of the nonlocal derivative in $\W^c_H$ is of the same order as the approximation error of $u$ and $\lambda$ within $\W_H$. 
\end{remark}

We consider nodal basis functions
\[
\phii_1, \ldots, \phii_{N_{int}}, \phii_{N_{int}+1}, \ldots , \phii_{N_{int}+N_{ext}} ,
\]
where the first $N_{int}$ nodes belong to the interior of $\W$ and the last $N_{ext}$ to ${\W_H \setminus \W}$.
The discrete spaces we consider consist of continuous, piecewise linear functions:
\begin{align*}
& V_h = \text{span } \{\phii_1, \ldots, \phii_{N_{int}+N_{ext}}  \}, \\
& K_h = \text{span } \{\phii_1, \ldots, \phii_{N_{int}}  \}, \\
& \Lambda_h = \text{span } \{ \phii_{N_{int}+1}, \ldots, \phii_{N_{int}+N_{ext}}  \}. 
\end{align*}
The spaces $V_h$ and $\Lambda_h$ are endowed with the $\| \cdot \|_V$ and $\| \cdot \|_\Lambda$ norms, respectively.
We set the discrete functions to vanish on $\partial \W_H$, so that $V_h \subset \widetilde{H}^{3/2-\eps}(\W_H)$.

\subsection{The mixed formulation with a Lagrangian multiplier}
The discrete problem reads: find $(u_h, \lambda_h) \in V_h \times \Lambda_h$ such that
\begin{equation}\label{eq:discrete}
\begin{split}
a(u_h, v_h) - b(v_h, \lambda_h) = F(v_h) \ & \forall v_h \in V_h, \\
b(u_h, \mu_h) = G(\mu_h) \ & \forall \mu_h \in \Lambda_h .
\end{split}
\end{equation}
Notice that the space $K_h$ coincides with the kernel of the restriction of $b$ to $\Lambda_h$
and consists of piecewise linear functions  over the triangulation of $\W$ that vanish on $\partial \W$

To verify the well-posedness of the discrete problem \eqref{eq:discrete}, we need to show that the bilinear form $a$ is coercive on $K_h$ and that the discrete inf-sup condition for the bilinear form $b$ holds.

\begin{lemma} There exists a constant $C > 0$, independent of $h$ {and $H$}, such that for all $v_h \in K_h$,
\begin{equation} \label{eq:disc_coercivity}
a(v_h, v_h) \ge C \| v_h \|_{V}^2 . 
\end{equation}
\end{lemma}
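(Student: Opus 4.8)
The plan is to reduce the discrete coercivity to the continuous coercivity already recorded in \eqref{eq:ellipticity}, by observing that $K_h$ is a \emph{conforming} subspace of the continuous kernel $K=\widetilde{H}^s(\W)$. First I would verify the inclusion $K_h \subset \widetilde{H}^s(\W)$. By definition $K_h = \text{span}\{\phii_1,\dots,\phii_{N_{int}}\}$ consists of continuous, piecewise linear functions over the triangulation of $\W$ that vanish on $\partial\W$. The zero-extension to $\rn$ of any such function is globally continuous and piecewise affine, hence it lies in $H^1_{\mathrm{loc}}(\rn)\cap L^2(\rn)$ and therefore in $H^s(\rn)$ for every $s\in(0,1)$; since it is supported in $\overline{\W}$ by construction, it belongs to $\widetilde{H}^s(\W)$. (For $s>1/2$ the vanishing trace on $\partial\W$ is precisely what guarantees that the zero-extension does not leave $H^s$.)

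Once the inclusion $K_h\subset K=\widetilde{H}^s(\W)$ is in place, the estimate is immediate: inequality \eqref{eq:ellipticity}, which is itself a consequence of the Poincar\'e inequality \eqref{eq:poincare} together with the identity $a(v,v)=\tfrac{C(n,s)}{2}|v|_{H^s(\rn)}^2$ valid on $\widetilde{H}^s(\W)$ (see Remark \ref{remark:form_a}), applies verbatim to every $v_h\in K_h$. This yields
\[
\|v_h\|_V^2 \le C\, a(v_h,v_h)\qquad \forall\, v_h\in K_h,
\]
which is \eqref{eq:disc_coercivity} after renaming the constant. The crucial point for the statement is that the constant $C$ in \eqref{eq:ellipticity} is the Poincar\'e constant of $\W$, depending only on $\W$, $n$ and $s$, and in particular independent of the mesh size $h$ and of the auxiliary parameter $H$ governing $\W_H$.

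I do not anticipate a genuine obstacle here, since the discrete coercivity is inherited from the continuous one through conformity; the only step requiring a word of justification is the inclusion $K_h\subset\widetilde{H}^s(\W)$, and specifically the observation that a piecewise linear function vanishing on $\partial\W$, extended by zero, has no spurious jump across $\partial\W$ and hence retains $H^s$-regularity across the interface. This is exactly why the space $K_h$ was built from the interior nodes with the boundary condition enforced. Everything else is a direct appeal to \eqref{eq:ellipticity} and Remark \ref{remark:form_a}.
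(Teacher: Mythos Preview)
Your proposal is correct and follows essentially the same approach as the paper: observe that $K_h$ is a subspace of the continuous kernel $K=\widetilde{H}^s(\W)$ and then invoke the continuous coercivity \eqref{eq:ellipticity}. The paper's proof is just those two sentences; your additional justification of the inclusion $K_h\subset\widetilde{H}^s(\W)$ is a welcome elaboration but not a different argument.
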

\begin{proof}
Observe that $K_h$ is a subspace of the continuous kernel $K$ given by \eqref{eq:kernel}.
The lemma follows by the coercivity of $a$ on $K$.
\end{proof}

In order to prove the discrete inf-sup condition, we utilize a projection over the discrete space. Since $V_h \subset \widetilde H^{3/2-\eps}(\W_H)$ for all $\eps > 0$, it is possible to define the $L^2$-projection of functions in the dual space of ${\widetilde H^{3/2-\eps}}(\W_H).$
Namely, we consider $P_h : H^{-\sigma}(\W_H) \to V_h$ for $0\le \sigma\le1$, the operator characterized by
\[
\int_{\W_H} (w - P_h w) \, v_h = 0 \quad \forall v_h \in V_h . 
\]

The following property will be useful in the sequel.

\begin{lemma} \label{projection} 
Let $0<\sigma<1$, and assume the family of meshes to be quasi-uniform. Then, there exists a constant $C$, independent of $h$ {and $H$}, such that 
\[
\| P_h w \|_{ H^\sigma(\W_H)} \le C \| w \|_{H^\sigma(\W_H)}
\]
for all $w \in  H^\sigma(\W_H)$.
\end{lemma}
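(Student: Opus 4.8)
The plan is to establish stability at the two endpoints $\sigma=0$ and $\sigma=1$ and then fill in the intermediate values by interpolation. At $\sigma=0$ the bound is immediate: since $P_h$ is the $L^2(\W_H)$-orthogonal projection onto $V_h$, we have $\|P_h w\|_{L^2(\W_H)} \le \|w\|_{L^2(\W_H)}$ with constant one. The substance of the lemma therefore lies in the case $\sigma = 1$, together with an operator-interpolation argument.

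For the $H^1$-bound I would compare $P_h w$ against a stable quasi-interpolant. Let $I_h$ denote the Scott--Zhang (or Cl\'ement) operator onto $V_h$, which enjoys the local estimates $\|w - I_h w\|_{L^2(\W_H)} \le C h \, |w|_{H^1(\W_H)}$ and $\|I_h w\|_{H^1(\W_H)} \le C \|w\|_{H^1(\W_H)}$, with $C$ depending only on the shape-regularity of the meshes. Invoking the inverse inequality $\|v_h\|_{H^1(\W_H)} \le C h^{-1} \|v_h\|_{L^2(\W_H)}$, valid on quasi-uniform meshes, on the discrete function $P_h w - I_h w$ gives
$$\|P_h w\|_{H^1(\W_H)} \le \|P_h w - I_h w\|_{H^1(\W_H)} + \|I_h w\|_{H^1(\W_H)} \le C h^{-1} \|P_h w - I_h w\|_{L^2(\W_H)} + C \|w\|_{H^1(\W_H)}.$$
Using the $L^2$-optimality of $P_h$ one bounds $\|P_h w - I_h w\|_{L^2(\W_H)} \le \|w - P_h w\|_{L^2(\W_H)} + \|w - I_h w\|_{L^2(\W_H)} \le 2\|w - I_h w\|_{L^2(\W_H)} \le C h\, |w|_{H^1(\W_H)}$, so the powers of $h$ cancel and $\|P_h w\|_{H^1(\W_H)} \le C \|w\|_{H^1(\W_H)}$. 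The crucial observation is that every constant in this chain arises from element-wise, hence $h$-scale-invariant, estimates whose only global ingredient is the quasi-uniformity ratio, so the resulting $C$ is independent of both $h$ and $H$.

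With stability in hand at $\sigma=0$ and $\sigma=1$, I would close by operator interpolation: since $P_h$ maps $L^2(\W_H)$ and $H^1(\W_H)$ boundedly into themselves, it maps the interpolation space $[L^2(\W_H), H^1(\W_H)]_\sigma$ into itself with norm controlled by a power of the two endpoint constants, and it then remains to identify this interpolation space with $H^\sigma(\W_H)$.

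The main obstacle I anticipate is exactly this last identification with constants independent of $H$. For a fixed Lipschitz domain the equivalence $[L^2(\W), H^1(\W)]_\sigma = H^\sigma(\W)$ is classical, but here $\W_H$ grows without bound, so one must verify that the equivalence constants do not degenerate with $H$. I would secure this through a uniform extension operator of the type provided by Lemma \ref{extension}: perform the interpolation on $\rn$, where the identification $[L^2(\rn), H^1(\rn)]_\sigma = H^\sigma(\rn)$ holds with absolute constants, and transfer back via an extension whose norm is uniform in $H$. Such uniformity is legitimate because the domains $\W_H$ can be taken with uniformly Lipschitz boundaries, differing from $\W$ by an annular layer of controlled geometry; once it is established, the proof is complete.
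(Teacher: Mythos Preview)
Your approach coincides with the paper's in outline: both interpolate between the $L^2$- and $H^1$-stability of $P_h$. For the $H^1$ bound the paper also invokes a global inverse inequality, but secures the $H$-independence of the constant by a different device: assuming (after translation) that $\W_H$ is a ball $B_R$ with $R\ge 1$, it uses the dilation $T(\hat x)=R\hat x$ to pull the estimate back to the fixed unit ball $B_1$, exploiting that $P_h$ commutes with dilations; this yields $\|\nabla P_h w\|_{L^2(B_R)}\le C\big(\|\nabla w\|_{L^2(B_R)}+R^{-1}\|w\|_{L^2(B_R)}\big)$ with the constant inherited from $B_1$. Your locality argument (Scott--Zhang and inverse estimates depend only on shape regularity and the quasi-uniformity ratio) reaches the same conclusion more directly and without assuming $\W_H$ is a ball. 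Your worry about the identification $[L^2(\W_H),H^1(\W_H)]_\sigma=H^\sigma(\W_H)$ with $H$-uniform equivalence constants is a genuine technical point that the paper passes over in silence; the extension-operator fix you propose is correct, and alternatively the paper's dilation argument could itself be applied directly at the $H^\sigma$ level, reducing to the fixed $B_1$ where the interpolation identification is harmless.
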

\begin{proof}
The proof follows by interpolation. On the one hand, the $L^2$-stability estimate 
\[
\| P_h w \|_{L^2(\W_H)} \le \| w \|_{L^2(\W_H)}
\]
is obvious.
On the other hand, the $H^1$ bound
\begin{equation}
\label{eq:estH1}
\| P_h w \|_{H^1(\W_H)} \le C \| w \|_{H^1(\W_H)} 
\end{equation}
is a consequence of a global inverse inequality (see, for example \cite{Auricchio}). 
\new{Because $P_h$ commutes with dilations, a scaling argument allows to show that $C$ can be taken independent of $H$. Indeed, we can assume --after a translation, if needed-- that $\W_H$ is a ball $B_R$ of radius $R\ge 1$ centered at the origin.
Denote by  $\hat{P}_h$ the $L^2$-projections over meshes in $B_1$. Then, for every $\hat w\in H^1(B_1)$ and every quasi-uniform mesh it holds that
\[
\| \hat{P}_h \hat w \|_{H^1(B_1)} \le C_1 \| \hat w \|_{H^1(B_1)} ,
\]
where $C_1$ is a \emph{fixed} constant. 
Next, define $T \colon B_1\to B_R$ by $T(\hat x)=R\hat x$, and, for each $ w\in H^1(B_R)$, the function $ w \circ T=\hat w\in H^1(B_1)$. 
Every quasi-uniform mesh $\T$ on $B_R$ with mesh size $h$ is in correspondence with a quasi-uniform mesh on $B_1$ with mesh size $\frac{h}{R}$ through the obvious identification $\T = T (\hat \T)$. For these meshes we have the identity $\widehat{P_h w}=\hat{P}_{\frac{h}{R}} \hat{w}$ and hence, changing variables,
\[
\| \nabla P_h w\|_{L^2(B_R)}= R^{\frac{n}{2}-1}
\| \nabla \hat{P}_{\frac{h}{R}} \hat w\|_{L^2(B_1)}\le 
C_1 R^{\frac{n}{2}-1} \left( \| \nabla \hat w\|_{L^2(B_1)} + \|\hat w\|_{L^2(B_1)} \right).
\]
Therefore,
\[
\| \nabla P_h w\|_{L^2(B_R)}\le C_1
\left( \| \nabla w\|_{L^2(B_R)} +\frac{1}{R} \| w\|_{L^2(B_R)} \right),
\]
and then
\[
\| \nabla P_h w\|_{L^2(B_R)}\le 2C_1 \|  w \|_{H^1(B_R)}.
\]
Since bounds for $ \|  P_h w\|_{L^2(B_R)}$ are immediate, \eqref{eq:estH1} follows.}
\end{proof}

\begin{remark}
The global quasi-uniformity hypothesis could actually be weakened and substituted by the ones from \cite{BramblePasciakSteinbach, Carstensen, CrouzeixThomee}. In these works, meshes are required to be just locally quasi-uniform, but some extra control on the change in measures of neighboring elements is needed as well.
\end{remark}

Stability estimates in negative-order norms are obtained by duality.
\begin{lemma}
Let $0 \le \sigma \le 1$, and assume the family of meshes to be quasi-uniform. Then, there is a constant $C$, independent of $h$ and $H$, such that 
\begin{equation*} 
\| P_h w \|_{\widetilde H^{-\sigma}(\W_H)} \le C \| w \|_{\widetilde H^{-\sigma}(\W_H)}
\end{equation*}
for all $w \in \widetilde H^{-\sigma}(\W_H)$.
\end{lemma}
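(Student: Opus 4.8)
The plan is to derive the negative-order estimate from the positive-order stability of Lemma \ref{projection} by a duality argument, exploiting that $P_h$ is the $L^2(\W_H)$-orthogonal projection onto $V_h$ and is therefore self-adjoint. Since $\widetilde H^{-\sigma}(\W_H) = (H^\sigma(\W_H))'$, I would start from the dual characterization
\[
\| P_h w \|_{\widetilde H^{-\sigma}(\W_H)} = \sup_{0 \ne v \in H^\sigma(\W_H)} \frac{\int_{\W_H} P_h w \, v}{\| v \|_{H^\sigma(\W_H)}},
\]
where, because $P_h w \in V_h \subset L^2(\W_H)$, the numerator is a genuine $L^2$ pairing. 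The goal is then to bound the numerator by $\| w \|_{\widetilde H^{-\sigma}(\W_H)} \| v \|_{H^\sigma(\W_H)}$ up to a constant independent of $h$ and $H$.

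The key step is to transfer the projection from $w$ onto $v$. Since $P_h w \in V_h$ and $v - P_h v$ is $L^2$-orthogonal to $V_h$ by the defining property of $P_h$, I would first write $\int_{\W_H} P_h w \, v = \int_{\W_H} P_h w \, P_h v$. Then, applying the characterization of $P_h$ with the admissible test function $v_h = P_h v \in V_h$, namely $\int_{\W_H} P_h w \, v_h = \int_{\W_H} w \, v_h$, I obtain
\[
\int_{\W_H} P_h w \, v = \int_{\W_H} w \, P_h v .
\]
Here the right-hand side is interpreted as the duality pairing between $w \in \widetilde H^{-\sigma}(\W_H)$ and $P_h v \in \widetilde H^\sigma(\W_H) \subset H^\sigma(\W_H)$ (cf. Remark \ref{rem:dualitypair}).

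With this identity in hand the estimate follows immediately: bounding the pairing and invoking the positive-order stability gives
\[
\int_{\W_H} w \, P_h v \le \| w \|_{\widetilde H^{-\sigma}(\W_H)} \, \| P_h v \|_{H^\sigma(\W_H)} \le C \, \| w \|_{\widetilde H^{-\sigma}(\W_H)} \, \| v \|_{H^\sigma(\W_H)},
\]
where the last inequality is Lemma \ref{projection}; dividing by $\| v \|_{H^\sigma(\W_H)}$ and taking the supremum concludes. The endpoint cases are covered because $H^\sigma$-stability holds on the full closed range: $\sigma = 0$ is the trivial $L^2$ bound and $\sigma = 1$ is \eqref{eq:estH1}, so the duality argument yields all of $0 \le \sigma \le 1$. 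The $H$-independence of the final constant is inherited verbatim from that of Lemma \ref{projection}, so no new scaling argument is needed.

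I expect the only genuine subtlety — rather than a real obstacle — to be the bookkeeping between the two dual spaces $\widetilde H^{-\sigma}(\W_H)$ and $H^{-\sigma}(\W_H)$. The operator $P_h$ is defined on the larger space $H^{-\sigma}(\W_H)$, and one must verify that restricting to $w \in \widetilde H^{-\sigma}(\W_H)$ makes the pairing $\int_{\W_H} w \, P_h v$ coincide with the value used in the characterization of $P_h$; this is exactly the consistency of the $\widetilde H^{-\sigma},H^\sigma$ duality with the defining $L^2$ relation on $V_h$, and once it is recorded the remainder of the argument is routine.
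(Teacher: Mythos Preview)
Your proposal is correct and follows essentially the same route as the paper's proof: the paper also writes $\int_{\W_H} P_h w \, v = \int_{\W_H} P_h w \, P_h v = \int_{\W_H} w \, P_h v$, bounds this by $\| w \|_{\widetilde H^{-\sigma}(\W_H)} \| P_h v \|_{H^\sigma(\W_H)}$, and concludes by the $H^\sigma$-stability of $P_h$ from Lemma~\ref{projection}. Your write-up simply makes the duality bookkeeping more explicit.
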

\begin{proof}
Consider $v \in H^\sigma(\W_H)$. We have 
\[
\int_{\W_H} P_hw \, v \, = \new{\int_{\W_H} P_h w \, P_h v} \,=\int_{\W_H} w \, P_hv \, \le \| w \|_{\widetilde  H^{-\sigma}(\W_H)} \| P_h v \|_{ H^{\sigma}(\W_H)} .
\]
The proof follows by the $ H^\sigma$-stability of $P_h$.
\end{proof}

\begin{remark}
For simplicity, the previous lemma was stated for functions defined in $\W_H$, but clearly it is also valid over $\W_H \setminus \W$:
\begin{equation} \label{eq:estab_neg}
\| P_h w \|_{\widetilde H^{-\sigma}(\W_H\setminus\W)} \le C \| w \|_{\widetilde H^{-\sigma}(\W_H\setminus\W)} \quad \forall w \in \widetilde H^{-\sigma}(\W_H\setminus\W).
\end{equation}
 
\new{For the sake of completeness, since the scaling argument does not carry over straightforwardly, we sketch a proof of the stability estimate 
$$\| P_h w \|_{ H^\sigma(\W_H\setminus\W)} \le C \| w \|_{H^\sigma(\W_H\setminus\W)}.$$ As in the proof of Lemma \ref{projection}, it suffices to show
\begin{equation}
\label{eq:estH1diff}
\| P_h w \|_{H^1(\W_H\setminus \W)} \le C \| w \|_{H^1(\W_H\setminus \W)} 
\end{equation}
with a fixed $C$, and then conclude by interpolation with the $L^2$ estimate.}

\new{ Consider a smooth truncation function $0\le \psi \le 1$, such that $\psi=1$ in $\W_1:=\{x\in \R^n \colon d(x,\W)<1\}$. 
Assume the support of $\psi$ is contained in a fixed open ball $B_r$ with radius $r$.
Thus,  for $H$ large enough (namely, for $h$ small enough), $B_r\subset \W_H$. 
Given $w\in H^1(\W_H\setminus \W)$, we write $w=w\psi + w(1-\psi)$ and therefore
we just need to bound 
\[
\| P_h (w \psi) \|_{H^1(\W_H\setminus \W)} \ \mbox{ and } \ \| P_h [w(1- \psi)] \|_{H^1(\W_H\setminus \W)}.
\] }

\new{ Because $r$ is fixed, if $h$ is small enough the former norm coincides with the norm over $B_r\setminus \W$, since
$B_r$ is open and contains the support of $\psi$. Moreover, since $\psi$ is smooth, we bound  
\[ 
\begin{aligned}
\| P_h (w \psi) \|_{H^1(\W_H\setminus \W)} & = \| P_h (w \psi) \|_{H^1(B_r\setminus \W)} \le C(r,\W) \| w \psi \|_{H^1(B_r\setminus \W)} \\
& \le C(r, \W, \psi) \| w \|_{H^1(\W_H\setminus \W)}.
\end{aligned} 
\]
On the other hand, considering a zero-extension within $\W$ and using \eqref{eq:estH1} and the smoothness of $\psi$ we deduce
\[ 
\begin{aligned}
\| P_h [w(1- \psi)] \|_{H^1(\W_H\setminus \W)} & = \| P_h [w(1- \psi)] \|_{H^1(\W_H)} \le C\| w(1- \psi) \|_{H^1(\W_H)} \\  
& \le C \| w(1- \psi) \|_{H^1(\W_H\setminus \W)} \le C\| w \|_{H^1(\W_H\setminus \W)},
\end{aligned}
\]
 with a final constant $C$ depending only on $r$ and $\psi$. 
From these estimates, \eqref{eq:estH1diff} follows immediately, and in consequence, we obtain the bound \eqref{eq:estab_neg}. }
\end{remark}

\begin{proposition}
Let $s \ne \frac12$. Then, there exists a constant $C$, independent of $h$ {and $H$}, such that the following discrete inf-sup condition holds:
\begin{equation}
\sup_{v_h \in V_h} \frac{b(v_h, \mu_h)}{\| v_h \|_{V}} \ge
C \| \mu_h \|_\Lambda \quad \forall \mu_h \in \Lambda_h.
\label{eq:infsup_disc}
\end{equation}
\end{proposition}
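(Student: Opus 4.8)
The plan is to prove \eqref{eq:infsup_disc} by exhibiting a \emph{Fortin operator} $\Pi_h\colon V\to V_h$ enjoying two properties: it commutes with the pairing against the multipliers, $b(\Pi_h v,\mu_h)=b(v,\mu_h)$ for every $\mu_h\in\Lambda_h$, and it is uniformly stable, $\|\Pi_h v\|_V\le C\|v\|_V$ with $C$ independent of $h$ and $H$. Once such an operator is available, the discrete inf-sup condition reduces to the continuous one \eqref{eq:infsup}: for a fixed $\mu_h\in\Lambda_h$ and any $v\in V$ we have $b(\Pi_h v,\mu_h)=b(v,\mu_h)$ and $\|\Pi_h v\|_V\le C\|v\|_V$, whence
\begin{equation*}
\sup_{v_h\in V_h}\frac{b(v_h,\mu_h)}{\|v_h\|_V}\ge\frac1C\sup_{v\in V}\frac{b(v,\mu_h)}{\|v\|_V}\ge\frac{1}{C\,C'}\|\mu_h\|_\Lambda,
\end{equation*}
with $C'$ the constant of \eqref{eq:infsup}.

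To construct $\Pi_h$ I would use the $L^2$-projection onto the finite element space over the exterior region $\W_H\setminus\W$. Because the triangulations of $\W$ and of $\W_H\setminus\W$ together form an admissible partition of $\W_H$, the boundary $\pp\W$ is resolved by the mesh; hence each $\mu_h\in\Lambda_h$ restricts to a finite element function on $\W_H\setminus\W$, and since $\mu_h$ vanishes outside $\W_H$ the pairing collapses to $b(w,\mu_h)=\int_{\W_H\setminus\W}w\,\mu_h$. Given $v\in V$, I would therefore set $\Pi_h v:=P_h v$, the $L^2(\W_H\setminus\W)$-projection onto $\Lambda_h$, regarded as an element of $V_h$ through its nodal values. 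The commuting property is then immediate from the defining orthogonality of $P_h$, namely $b(\Pi_h v,\mu_h)=\int_{\W_H\setminus\W}P_h v\,\mu_h=\int_{\W_H\setminus\W}v\,\mu_h=b(v,\mu_h)$.

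The substance of the argument is the uniform stability. Writing $\|\Pi_h v\|_V=\|\Pi_h v\|_{H^s(\rn)}$ and using that $\Pi_h v\in V_h\subset\widetilde H^s(\W_H)$, Corollary \ref{cor:norma} applied to the domain $\W_H$ gives $\|\Pi_h v\|_{H^s(\rn)}\le C\|\Pi_h v\|_{H^s(\W_H)}$; this is exactly where the hypothesis $s\neq\frac12$ is used, since that equivalence rests on the Hardy inequality, which is unavailable at $s=\frac12$ (cf. Remark \ref{remark:un_medio}). I would then bound $\|\Pi_h v\|_{H^s(\W_H)}$ by the $H^s$-stability of the projection (Lemma \ref{projection} and the companion estimate for $\W_H\setminus\W$ accompanying \eqref{eq:estab_neg}), and finally invoke the extension operator of Lemma \ref{extension} for $\W^c$ (Remark \ref{rem:ext_complemento}) together with the dual characterization of $\|\mu_h\|_\Lambda$: choosing $v=E w$ with $w\in H^s(\W^c)$ a near-maximizer of $b(\cdot,\mu_h)/\|\cdot\|_{H^s(\W^c)}$ yields $\|\Pi_h v\|_V\le C\|w\|_{H^s(\W^c)}$ while $b(\Pi_h v,\mu_h)=b(w,\mu_h)$ remains comparable to $\|\mu_h\|_\Lambda$.

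I expect the main obstacle to be keeping every constant independent of $H$ as $\W_H\to\rn$, which is delicate precisely because the $V$-norm is the nonlocal norm on all of $\rn$. This forces one to rely on the $H$-uniform stability of $P_h$ (the scaling invariance established in Lemma \ref{projection} and in \eqref{eq:estab_neg}) and to check that the norm-equivalence constant of Corollary \ref{cor:norma} does not degenerate along the family $\{\W_H\}$. A secondary technical point is the interface: since the nodes on $\pp\W$ belong to $\Lambda_h$, the function $\Pi_h v$ need not vanish on a thin layer inside $\W$, and one must verify that the contribution of this layer to $\|\Pi_h v\|_{H^s(\W_H)}$ is controlled by the exterior data, using a discrete (inverse and trace) estimate for the piecewise linear functions in $\Lambda_h$.
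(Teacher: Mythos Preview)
Your approach is essentially the same as the paper's: both construct a Fortin operator by composing the extension $E\colon H^s(\W^c)\to V$ with an $L^2$-projection onto the discrete space, use the $H^s$-stability of the projection (Lemma~\ref{projection}) together with Corollary~\ref{cor:norma} (whence the restriction $s\neq\frac12$) to obtain uniform stability, and then reduce to the continuous inf-sup~\eqref{eq:infsup}. The only minor difference is that the paper projects onto $V_h$ over all of $\W_H$ rather than onto $\Lambda_h$ over $\W_H\setminus\W$, which sidesteps the interface issue you flag at the end.
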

\begin{proof}
In first place, let $E$ be the extension operator given by Lemma \ref{extension}
{(replacing $\W$ with $\W^c$ there)} and $P_h$ the $L^2$-projection considered 
in this section. For simplicity of notation, we write, for $v \in H^s(\W^c)$, $P_h(Ev) = P_h \left((Ev) \big|_{\W_H} \right).$ Taking into account 
the fact that $P_h(Ev) \in \widetilde{H}^s(\W_H)$ and the continuity of these operators, it is clear that
\[
\| P_h (Ev) \|_V = \| P_h (Ev) \|_{\widetilde H^s(\W_H)} \le C \| v \|_{H^s(\W^c)} \quad \forall v \in H^s(\W^c), 
\]
which in turn allows us to use $P_h(Ev)$ as a Fortin operator.

Indeed, let $\mu_h \in \Lambda_h$, $v \in H^s(\W^c)$ and write 
\[ \begin{aligned}
\sup_{v_h \in V_h} \frac{b(v_h, \mu_h)}{\| v_h \|_{V}}  & \ge
\frac{b(P_h(Ev), \mu_h)}{ \| P_h(Ev) \|_V} \ge C \frac{b(v, \mu_h)}{ \| v \|_{H^s(\W^c)}}.
\end{aligned} \]
Using the fact that $v$ is arbitrary together with \eqref{eq:infsup}, we deduce \eqref{eq:infsup_disc}.
\end{proof}

\begin{remark} \label{rem:s_not_1/2}
{The previous proposition is the basis for the stability of the mixed numerical method we propose in this paper. The proof works only for $s\neq \frac12$ and thus from this point on we asume that to be the case. 
However, we remark that the experimental orders of convergence we have obtained for $s = \frac12$ agree with those expected by the theory by taking the limit $s\to \frac12$, supporting the fact that this drawback is a mere limitation of our proof.}
\end{remark}

Due to the standard theory of finite element approximations of saddle point problems \cite{BoffiBrezziFortin}, we deduce the following estimate.
\begin{proposition} \label{prop:cea}
Let $(u,\lambda) \in V\times \Lambda$ and $(u_h,\lambda_h) \in V_h \times \Lambda_h$ be the respective solutions of problems \eqref{eq:cont} and \eqref{eq:discrete}. Then there exists a constant $C$, independent of $h$ {and $H$}, such that
\begin{equation} \label{eq:cea}
\| u - u_h \|_V + \| \lambda - \lambda_h \|_\Lambda \le 
C \left( \inf_{v_h \in V_h} \| u - v_h \|_V +  \inf_{\mu_h \in \Lambda_h} \| \lambda - \mu_h \|_\Lambda \right) . 
\end{equation}
\end{proposition}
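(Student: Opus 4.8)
The plan is to recognize that \eqref{eq:cont} and \eqref{eq:discrete} form a conforming pair of continuous and discrete saddle-point problems, so that the abstract Babu\v ska--Brezzi quasi-optimality theory applies verbatim. First I would assemble the four structural ingredients the theory requires, all of which are already available: continuity of $a$ on $V\times V$ and of $b$ on $V\times\Lambda$ (Remark \ref{remark:form_a} and the definition of $b$); coercivity of $a$ on the discrete kernel $K_h$ with a constant independent of $h$ and $H$ \eqref{eq:disc_coercivity}; and the discrete inf-sup condition \eqref{eq:infsup_disc} for $b$, again with a constant independent of $h$ and $H$. Since $V_h\subset V$ and $\Lambda_h\subset\Lambda$, the discrete forms are genuine restrictions of the continuous ones, and subtracting \eqref{eq:discrete} from \eqref{eq:cont} (tested against $v_h\in V_h$, $\mu_h\in\Lambda_h$) yields the Galerkin orthogonality relations $a(u-u_h,v_h)-b(v_h,\lambda-\lambda_h)=0$ and $b(u-u_h,\mu_h)=0$.

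The core of the argument is the standard two-stage estimate. For the primal error I would fix an arbitrary $v_h\in V_h$ and set $w_h=u_h-v_h$. Using $b(u-u_h,\mu_h)=0$ one obtains $b(w_h,\mu_h)=b(u-v_h,\mu_h)$, so that $w_h$ fails to lie in $K_h$ only by the approximation error of the constraint; the discrete inf-sup condition \eqref{eq:infsup_disc} then produces a discrete lifting $z_h\in V_h$ with $\|z_h\|_V\le C\,\|u-v_h\|_V$ and $w_h-z_h\in K_h$. Coercivity on $K_h$ \eqref{eq:disc_coercivity} together with the first orthogonality relation bounds $\|w_h-z_h\|_V$, and the triangle inequality gives $\|u-u_h\|_V\le C\inf_{v_h}\|u-v_h\|_V+C\inf_{\mu_h}\|\lambda-\mu_h\|_\Lambda$. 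For the multiplier error I would write, for arbitrary $\mu_h\in\Lambda_h$, $b(v_h,\lambda_h-\mu_h)=a(u_h,v_h)-F(v_h)-b(v_h,\mu_h)$ and use the first equation of \eqref{eq:cont} to rewrite $F(v_h)$; dividing by $\|v_h\|_V$, taking the supremum over $v_h\in V_h$ and invoking \eqref{eq:infsup_disc} bounds $\|\lambda_h-\mu_h\|_\Lambda$ in terms of $\|u-u_h\|_V$ and $\|\lambda-\mu_h\|_\Lambda$; a final triangle inequality produces \eqref{eq:cea}.

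The only point requiring care is the uniformity of the final constant in both $h$ and $H$. The Brezzi constant is an explicit expression in the continuity norm of $a$, the coercivity constant of $a$ on $K_h$, and the inf-sup constant of $b$; since the latter two have already been shown to be independent of $h$ and $H$ in the preceding lemmas, the assembled constant inherits this uniformity and no new estimate is needed. Thus the proof is essentially bookkeeping: verify the hypotheses, record that their constants are uniform, and quote \cite{BoffiBrezziFortin}. I do not expect a genuine obstacle here, only the need to track that every constant entering the abstract bound is one of the already-uniform quantities.
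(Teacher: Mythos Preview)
Your proposal is correct and matches the paper's own treatment: the paper does not give a detailed proof but simply invokes the standard Babu\v ska--Brezzi quasi-optimality theory, citing \cite{BoffiBrezziFortin}, after having established the uniform continuity, discrete kernel coercivity \eqref{eq:disc_coercivity}, and discrete inf-sup \eqref{eq:infsup_disc}. Your outline faithfully unpacks exactly what that citation entails, including the observation that the assembled constant is uniform in $h$ and $H$ because each of its ingredients is.
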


In order to obtain convergence order estimates for the finite element approximations under consideration, it remains to estimate the infima on the right hand side of \eqref{eq:cea}.
Within $\W_H$, this is achieved by means of a quasi-interpolation operator \cite{Clement, ScottZhang}.
We denote such an operator by $\Pi_h$; depending on whether discrete functions are required to have zero trace or not, $\Pi_h$ could be either the Cl\'ement or the Scott-Zhang operator. For these operators, it holds that (see, for example, \cite{Ciarlet}) 
\begin{equation}
 \label{eq:sz_estimate}
\| v -\Pi_h v \|_{H^{t}(\W)} \le C h^{\sigma - t} \| v  \|_{H^{\sigma}(\W)}
\quad \forall v \in H^{\sigma}(\W), \ 0 \le t \le \sigma\le 2.
\end{equation}

{Since this estimate is applied later to $\W_H\setminus\W$ it is important to stress that the 
constant can be taken independent of the diameter of $\W$. This is indeed the case due to the fact
that \eqref{eq:sz_estimate} is obtained   by summing \emph{local} estimates on stars (see e.g., \cite{AB, Ciarlet}). }

\begin{lemma} Given $v \in L^2(\W_H \setminus \W)$ and $0\le \sigma \le 1$, the following estimate holds:
\begin{equation}
\| v - P_h v \|_{\widetilde H^{-\sigma}(\W_H \setminus \W)} \le C h^{\sigma} \| v \|_{L^2(\W_H \setminus \W)}.
\label{eq:aprox_neg}
\end{equation}
{The constant $C$ is independent of $h$ and $H$.}
\end{lemma}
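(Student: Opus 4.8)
The plan is to argue by duality, following the standard route for negative-order estimates of the $L^2$-projection. Since $\widetilde H^{-\sigma}(\W_H\setminus\W)$ is by definition the dual of $H^\sigma(\W_H\setminus\W)$, I would begin by writing
\[
\| v - P_h v \|_{\widetilde H^{-\sigma}(\W_H\setminus\W)} = \sup_{0\neq w\in H^\sigma(\W_H\setminus\W)} \frac{\int_{\W_H\setminus\W}(v-P_hv)\,w}{\|w\|_{H^\sigma(\W_H\setminus\W)}}.
\]
The crucial structural feature is the defining Galerkin orthogonality of the $L^2$-projection: $\int_{\W_H\setminus\W}(v-P_hv)\,w_h=0$ for every $w_h$ in the (continuous, piecewise linear) discrete space over $\W_H\setminus\W$. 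Hence, for any fixed $w$ in the numerator, I may subtract a discrete function, the natural choice being the quasi-interpolant $\Pi_h w$ (the Cl\'ement operator, which needs no trace compatibility), so that $\int_{\W_H\setminus\W}(v-P_hv)\,w=\int_{\W_H\setminus\W}(v-P_hv)(w-\Pi_h w)$.

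From here the estimate closes by a Cauchy--Schwarz splitting into two $L^2$ factors. The first factor is controlled by the best-approximation property of $P_h$: since $0$ belongs to the discrete space, $\|v-P_hv\|_{L^2(\W_H\setminus\W)}\le\|v\|_{L^2(\W_H\setminus\W)}$. The second factor is controlled by the quasi-interpolation estimate \eqref{eq:sz_estimate} with $t=0$ on $\W_H\setminus\W$, namely $\|w-\Pi_h w\|_{L^2(\W_H\setminus\W)}\le C h^{\sigma}\|w\|_{H^\sigma(\W_H\setminus\W)}$, which is valid for the whole range $0\le\sigma\le 1$. Combining these and dividing by $\|w\|_{H^\sigma(\W_H\setminus\W)}$ yields
\[
\| v - P_h v \|_{\widetilde H^{-\sigma}(\W_H\setminus\W)} \le C h^{\sigma}\|v\|_{L^2(\W_H\setminus\W)}
\]
directly, with no need to interpolate between endpoints; alternatively one could prove the cases $\sigma=0$ (trivial from $L^2$-stability) and $\sigma=1$ and interpolate, but the direct argument is cleaner.

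The step requiring the most care is the $H$-independence of the final constant, since $\W_H\setminus\W$ grows in diameter as $h\to 0$. Neither factor degrades: the inequality $\|v-P_hv\|_{L^2}\le\|v\|_{L^2}$ is automatically insensitive to the size of the domain, and, as already emphasized after \eqref{eq:sz_estimate}, the quasi-interpolation estimate is assembled from \emph{local} estimates on stars, so its constant depends only on shape-regularity and not on the diameter of the underlying domain. Thus the product constant is independent of both $h$ and $H$, which is exactly the assertion of the lemma.
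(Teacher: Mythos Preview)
Your argument is correct and matches the paper's proof essentially line for line: duality, Galerkin orthogonality of $P_h$ to insert $\Pi_h w$, Cauchy--Schwarz, the trivial bound $\|v-P_hv\|_{L^2}\le\|v\|_{L^2}$, and the quasi-interpolation estimate \eqref{eq:sz_estimate}. Your explicit remark on why the constant is $H$-independent is, if anything, more detailed than the paper's own presentation.
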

\begin{proof}
Let $v \in L^2({\W_H}\setminus \W)$. Given $\phii \in H^\sigma(\W_H\setminus\W)$, considering the quasi-interpolation operator $\Pi_h$ and taking into account that 
$(v-P_h v) \perp V_h$,
\begin{align*}
\frac{\int_{\W_H\setminus \W} (v - P_h v) \phii}{ \| \phii \|_{ H^\sigma(\W_H\setminus\W)}} & = 
\frac{\int_{\W_H\setminus \W} (v - P_h v) (\phii - \Pi_h \phii)}{ \| \phii \|_{H^\sigma(\W_H\setminus\W)}} \le \\
& \le \|v - P_h v \|_{L^2(\W_H \setminus \W)} \frac{\| \phii - \Pi_h \phii \|_{L^2(\W_H \setminus \W)}}{ \| \phii \|_{H^\sigma(\W_H\setminus\W)}} .
\end{align*}
Combining well-known approximation properties of $\Pi_h$ with the trivial estimate $\|v - P_h v \|_{L^2(\W_H \setminus \W)} \le \|v \|_{L^2(\W_H \setminus \W)}$, we conclude the proof.
\end{proof} 

For the following we need to define restrictions in negative order spaces. 
Let $\sigma \in (0,1)$ and choose a fixed cutoff 
function $\eta \in C^\infty(\W^c)$ such that
\begin{equation}
 \label{eq:cutoff}
0\le \eta\le 1, \quad  \text{supp}(\eta)\subset \overline{\W}_H \setminus \W,  \quad \eta(x)=1 \quad \mbox{in} \quad \W_{H-1} \setminus \W   .
\end{equation}

Define the operator {$T_\eta:\; H^\sigma(\W_H\setminus \W) \to H^\sigma(\W^c)$} that multiplies by $\eta$ 
{any extension to $\W^c$ of functions in $H^\sigma(\W_H\setminus \W)$},
that is, $T_\eta(\psi):=\eta \psi$. We have {$\|T_\eta(\psi)\|_{H^\sigma(\W^c)}\le C \|\psi\|_{H^\sigma(\W_H\setminus \W)}$}, with a constant that does not depend on $H$  (use interpolation from the obvious cases $\sigma=0$ and $\sigma=1$).

Then, $T_\eta$ can be extended to negative-order spaces, $T_\eta :\widetilde H^{-\sigma}(\W^c)\to \widetilde H^{-\sigma}(\W_H \setminus \W)$.
Consider an element $\mu\in \widetilde H^{-\sigma}(\W^c)$, and define $T_\eta$ by means of 
\[
\langle T_\eta(\mu), \psi \rangle=\langle \mu , \eta \psi \rangle. 
\]
The continuity $\|T_\eta(\mu)\|_{\widetilde H^{-\sigma}(\W_H\setminus \W)}\le C \|\mu\|_{\widetilde H^{-\sigma}(\W^c)},$
follows easily from the continuity in positive spaces. Notice that similar considerations hold for $T_{1-\eta} : \widetilde H^{-\sigma}(\W^c)\to \widetilde H^{-\sigma}(\W_{H-1}^c)$. A localization estimate for negative-order norms using these maps reads as follows.

\begin{lemma} \label{lemma:triangular}
The following identity holds for all $\mu \in \widetilde H^{-\sigma}(\W^c)$:
\[
\| \mu \|_{\widetilde H^{-\sigma}(\W^c)} \le \| T_\eta (\mu) \|_{\widetilde H^{-\sigma}(\W_H\setminus\W)} + \| T_{1-\eta} (\mu) \|_{\widetilde H^{-\sigma}(\W_{H-1}^c)}.
\]
\end{lemma}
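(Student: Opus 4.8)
The plan is to argue directly from the definition of the dual norm together with the partition of unity $1=\eta+(1-\eta)$ on $\W^c$. Recall that $\|\mu\|_{\widetilde H^{-\sigma}(\W^c)}=\sup_{\psi}\langle\mu,\psi\rangle/\|\psi\|_{H^\sigma(\W^c)}$, the supremum being taken over $\psi\in H^\sigma(\W^c)\setminus\{0\}$. I would fix such a $\psi$ and split the pairing as $\langle\mu,\psi\rangle=\langle\mu,\eta\psi\rangle+\langle\mu,(1-\eta)\psi\rangle$.

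First I would reinterpret each summand as a pairing over the appropriate subdomain. By the transpose definitions of $T_\eta$ and $T_{1-\eta}$, we have $\langle\mu,\eta\psi\rangle=\langle T_\eta(\mu),\psi|_{\W_H\setminus\W}\rangle$ and $\langle\mu,(1-\eta)\psi\rangle=\langle T_{1-\eta}(\mu),\psi|_{\W_{H-1}^c}\rangle$. This step is legitimate precisely because of the support properties \eqref{eq:cutoff} of $\eta$: since $\mathrm{supp}(\eta)\subset\overline\W_H\setminus\W$, the product $\eta\psi$ depends only on $\psi|_{\W_H\setminus\W}$, and since $\eta\equiv1$ on $\W_{H-1}\setminus\W$, the function $(1-\eta)\psi$ is supported in $\W_{H-1}^c$ and depends only on $\psi|_{\W_{H-1}^c}$.

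Next I would estimate each pairing by the duality (Cauchy--Schwarz) inequality, obtaining $|\langle T_\eta(\mu),\psi|_{\W_H\setminus\W}\rangle|\le\|T_\eta(\mu)\|_{\widetilde H^{-\sigma}(\W_H\setminus\W)}\,\|\psi|_{\W_H\setminus\W}\|_{H^\sigma(\W_H\setminus\W)}$ and the analogous bound for the second term over $\W_{H-1}^c$. The observation that closes the argument is that restriction to a subdomain cannot increase the $H^\sigma$-norm: since $\W_H\setminus\W\subset\W^c$ and $\W_{H-1}^c\subset\W^c$, both the $L^2$ contribution and the Gagliardo double integral only shrink, so $\|\psi|_{\W_H\setminus\W}\|_{H^\sigma(\W_H\setminus\W)}\le\|\psi\|_{H^\sigma(\W^c)}$ and likewise over $\W_{H-1}^c$. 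Summing the two bounds, dividing by $\|\psi\|_{H^\sigma(\W^c)}$, and taking the supremum over $\psi$ delivers the stated inequality.

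The whole proof is a short triangle-inequality/duality argument, so the only genuinely delicate point is the bookkeeping of domains in the second step --- confirming that $\eta\psi$ and $(1-\eta)\psi$ are supported in $\overline\W_H\setminus\W$ and $\W_{H-1}^c$ respectively, so that the intermediate pairings are well-posed over exactly those sets and convert back into $\langle\mu,\cdot\rangle$. Because these facts are immediate consequences of \eqref{eq:cutoff} and of the continuity of $T_\eta$ and $T_{1-\eta}$ established above, no additional hypotheses on the mesh or on $\sigma$ beyond $\sigma\in(0,1)$ are needed.
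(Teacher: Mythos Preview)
Your argument is correct and follows essentially the same route as the paper's proof: both split the pairing $\langle\mu,\psi\rangle$ via the partition of unity $\eta+(1-\eta)=1$, use the transpose definition of $T_\eta$ and $T_{1-\eta}$ to rewrite the two pieces as pairings over $\W_H\setminus\W$ and $\W_{H-1}^c$, and then invoke the monotonicity of the $H^\sigma$-norm under restriction to replace $\|\psi\|_{H^\sigma(\W^c)}$ by the norms on the subdomains before taking the supremum. The only cosmetic difference is that the paper first records the decomposition $\psi=T_\eta(\psi|_{\W_H\setminus\W})+T_{1-\eta}(\psi|_{\W_{H-1}^c})$ and then passes to the pairing, whereas you split the pairing directly; the content is the same.
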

\begin{proof}
We first notice that, for every $\psi \in H^{\sigma}(\W^c)$, it holds that
\[
\psi = T_\eta \left( \psi \big|_{\W_H\setminus\W} \right) + T_{1-\eta} \left( \psi \big|_{\W_{H-1}^c} \right),
\]
and that 
\[ \begin{aligned}
\| T_\eta \left( \psi \big|_{\W_H\setminus\W} \right)\|_{H^{\sigma}(\W^c)} & \le \|  \psi \big|_{\W_H\setminus\W} \|_{H^{\sigma}(\W_H\setminus\W)}, \\
\| T_{1-\eta} \left( \psi \big|_{\W_{H-1}^c} \right)\|_{H^{\sigma}(\W^c)} & \le \|  \psi \big|_{\W_{H-1}^c} \|_{H^{\sigma}(\W_{H-1}^c)} . 
\end{aligned} \]
So, given $\mu \in \widetilde H^{-\sigma}(\W^c)$, it follows that
\begin{equation} \label{eq:linearity_mu}
\frac{\langle \mu, \psi \rangle}{\| \psi \|_{H^{\sigma}(\W^c)}} \le 
\frac{\langle T_\eta (\mu), \psi \big|_{\W_H\setminus\W} \rangle}{\| \psi\big|_{\W_H\setminus\W} \|_{H^{\sigma}(\W_H\setminus\W)}}  
+ \frac{\langle T_{1-\eta} (\mu), \psi\big|_{\W_{H-1}^c}  \rangle}{\| \psi\big|_{\W_{H-1}^c}  \|_{H^{\sigma}(\W_{H-1}^c)}} 
\end{equation}
for all $\psi \in H^{\sigma}(\W^c)$. The proof follows by taking suprema in both sides of the inequality above.
\end{proof}

\begin{remark} \label{remark:triangular}
 From \eqref{eq:linearity_mu}, it is apparent that, if $\mu \in \widetilde H^{-\sigma}(\W^c)$ and  $\nu \in \widetilde H^{-\sigma}(\W_H \setminus \W)$, then 
\[
\| \mu - \nu \|_{\widetilde H^{-\sigma}(\W^c)} \le \| T_\eta (\mu) - \nu  \|_{\widetilde H^{-\sigma}(\W_H\setminus\W)} + \| T_{1-\eta} (\mu) \|_{\widetilde H^{-\sigma}(\W_{H-1}^c)}.
\]
\end{remark}

In order to simplify notation, in the sequel we just write $\eta\mu $ and $(1-\eta)\mu$ for $T_\eta(\mu)$ and $T_{1-\eta}(\mu)$, respectively.

Next, we estimate the approximation errors within the meshed domain.

\begin{proposition} \label{prop:interpolation}
The following estimates hold:
\begin{align}
\inf_{v_h \in V_h} \| u - v_h \|_{H^s(\W_H)}  & \le 
C \,{h^{1/2-\eps}} \Sigma_{f,g},
\label{eq:interpolation_H_u} \\
  \inf_{\mu_h \in \Lambda_h} \| \eta \lambda - \mu_h \|_{\widetilde H^{-s}(\W_H\setminus\W)} & \le
C \,{h^{1/2-\eps}} \Sigma_{f,g}, \label{eq:interpolation_H_nsu}
\end{align}
where $\Sigma_{f,g}$ is given by \eqref{eq:def_sigma} and $\eta$ is the cutoff function in \eqref{eq:cutoff}.
\end{proposition}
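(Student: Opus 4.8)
The plan is to bound the two infima by explicit approximants and to convert the regularity supplied by Theorems~\ref{teo:regularidadDirect} and~\ref{teo:regularidad} into the common rate $h^{1/2-\eps}$, keeping every constant independent of $H$.

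For \eqref{eq:interpolation_H_u} I would take $v_h=\Pi_h u$, with $\Pi_h$ the Scott-Zhang quasi-interpolant onto $V_h$ that preserves the homogeneous trace on $\partial\W_H$. Since $g$ has bounded support, for $h$ small enough $\W_H\supset\mathrm{supp}\,g$, so $u=g=0$ near $\partial\W_H$ and $\Pi_h u\in V_h$. By Theorem~\ref{teo:regularidadDirect}, $u\in H^{s+1/2-\eps}(\rn)$ with $\|u\|_{H^{s+1/2-\eps}(\rn)}\le C\Sigma_{f,g}$; restriction to $\W_H$ does not increase this norm, so \eqref{eq:sz_estimate} with $\sigma=s+1/2-\eps$ and $t=s$ yields $\|u-\Pi_h u\|_{H^s(\W_H)}\le C h^{1/2-\eps}\Sigma_{f,g}$. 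The constant in \eqref{eq:sz_estimate} being independent of the diameter (as stressed after that estimate) is precisely what makes the bound uniform in $H$.

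For \eqref{eq:interpolation_H_nsu} I would choose $\mu_h=P_h(\eta\lambda)$, the $L^2$-projection onto $\Lambda_h$ (through its extension to negative-order data). The rate must arise from trading the regularity of $\eta\lambda$ against the negative target index $-s$, the total gain being exactly $(-s+1/2-\eps)-(-s)=1/2-\eps$, so it is natural to split at $s=1/2$. When $s<1/2$, Theorem~\ref{teo:regularidad} and the continuity of $T_\eta$ give $\eta\lambda\in H^{1/2-s-\eps}(\W_H\setminus\W)\subset L^2$ with norm $\le C\Sigma_{f,g}$; testing against $\phi\in H^s(\W_H\setminus\W)$, the orthogonality of $\eta\lambda-P_h(\eta\lambda)$ to $\Lambda_h$ lets me insert a quasi-interpolant of $\phi$ into $\Lambda_h$ and estimate
\[
\int_{\W_H\setminus\W}(\eta\lambda-P_h(\eta\lambda))(\phi-\Pi_h\phi)
\le \|\eta\lambda-P_h(\eta\lambda)\|_{L^2}\,\|\phi-\Pi_h\phi\|_{L^2}
\le C h^{1/2-s-\eps}\,h^{s}\,\|\eta\lambda\|_{H^{1/2-s-\eps}}\|\phi\|_{H^s},
\]
where I used that $P_h$ is the $L^2$-best approximation together with \eqref{eq:sz_estimate}; dividing by $\|\phi\|_{H^s}$ and taking the supremum gives $h^{1/2-\eps}\Sigma_{f,g}$. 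When $s>1/2$, the multiplier $\eta\lambda$ is only of negative order $\widetilde H^{-(s-1/2+\eps)}(\W_H\setminus\W)$, and here I would interpolate the operator $I-P_h$ over $\W_H\setminus\W$ between its two established endpoints: from \eqref{eq:aprox_neg}, $\|w-P_hw\|_{\widetilde H^{-s}}\le Ch^{s}\|w\|_{L^2}$, and from the stability \eqref{eq:estab_neg}, $\|w-P_hw\|_{\widetilde H^{-s}}\le C\|w\|_{\widetilde H^{-s}}$. Identifying $[\widetilde H^{-s},L^2]_\theta=\widetilde H^{-s(1-\theta)}$, interpolation yields $\|w-P_hw\|_{\widetilde H^{-s}}\le Ch^{s-\sigma}\|w\|_{\widetilde H^{-\sigma}}$ for $0\le\sigma\le s$; the choice $\sigma=s-1/2+\eps$ and $w=\eta\lambda$ delivers the rate $h^{1/2-\eps}$.

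The step I expect to be the real obstacle is the case $s>1/2$, where $\lambda$ is not square integrable, so one cannot argue with $L^2$ approximation and must rely on interpolating $I-P_h$ in negative norms. The bookkeeping to watch is that Theorem~\ref{teo:regularidad} controls $\lambda$ in $H^{-s+1/2-\eps}(\W^c)=(\widetilde H^{s-1/2+\eps}(\W^c))'$, whereas the interpolated estimate is applied in the supported scale $\widetilde H^{-\sigma}(\W_H\setminus\W)$; reconciling the two relies on the continuity of $T_\eta$ and on the fact that for $s$ away from $1$ one has $\sigma=s-1/2+\eps<1/2$, so that the supported and unsupported negative scales coincide, while the cutoff $\eta$ (which vanishes near $\partial\W_H$) disposes of the outer boundary. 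Since all the constants produced by $\Pi_h$, $P_h$ and $T_\eta$ have already been shown to be independent of $H$, the final bounds inherit this uniformity, which is essential because $\mathrm{diam}(\W_H)\to\infty$ as $h\to0$.
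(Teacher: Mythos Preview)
Your argument is correct and follows essentially the same route as the paper: quasi-interpolation plus the regularity of Theorem~\ref{teo:regularidadDirect} for \eqref{eq:interpolation_H_u}, and $\mu_h=P_h(\eta\lambda)$ for \eqref{eq:interpolation_H_nsu}, handling $s<1/2$ by a duality argument that exploits the positive regularity of $\eta\lambda$ and $s>1/2$ by interpolating between \eqref{eq:estab_neg} and \eqref{eq:aprox_neg}. Your treatment of the case $s<1/2$ is in fact more explicit than the paper's (which simply invokes ``\eqref{eq:aprox_neg} and approximation properties of $P_h$''), and your closing remarks about matching the $H^{-s+1/2-\eps}(\W^c)$ regularity of $\lambda$ with the supported scale on $\W_H\setminus\W$ via $T_\eta$ correctly identify the only delicate bookkeeping.
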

\begin{proof}
Estimate \eqref{eq:interpolation_H_u} is easily attained by taking into account that $u$ vanishes on $\W_H^c$ (because we are assuming that the support of $g$ is bounded), and applying the regularity estimate \eqref{eq:reg_u} jointly with approximation identities for quasi-interpolation operators.

In order to prove \eqref{eq:interpolation_H_nsu},  in first place we assume $s< 1/2$, 
so that $\eta \lambda \in  L^2(\W_H \setminus \W)$ {by Theorem~\ref{teo:regularidad}}. Set $\mu_h = P_h (\eta \lambda)$, then  applying \eqref{eq:aprox_neg}, approximation properties of $P_h$ and the continuity of $T_\eta:H^{-s+1/2-\eps}(\W_H\setminus \W) \to  H^{-s+1/2-\eps}(\W^c)$, we obtain \eqref{eq:interpolation_H_nsu} immediately.

Meanwhile, if $s > 1/2$, considering $\sigma = s$ in \eqref{eq:estab_neg} and \eqref{eq:aprox_neg}, we obtain:
\begin{align*}
\| w - P_h w \|_{\widetilde H^{-s}(\W_H \setminus \W)} &  \le C \| w \|_{\widetilde H^{-s}(\W_H \setminus \W)} \\
\| w - P_h w \|_{\widetilde H^{-s}(\W_H \setminus \W)} & \le C h^{s} \| w \|_{L^2(\W_H \setminus \W)} .
\end{align*}
Interpolating these two identities, recalling the regularity of $\lambda$ given by Theorem \ref{teo:regularidad} 
and the continuity of $T_\eta$, we deduce that 
\begin{align*}
\| \eta \lambda - P_h (\eta \lambda) \|_{\widetilde H^{-s}(\W_H \setminus \W)} &\le C h^{1/2 - \eps}\| \lambda \|_{H^{-s+1/2-\eps}({\W^c})} \le C  h^{1/2 - \eps} \,  \Sigma_{f,g}.
 \end{align*}
\end{proof}

As the norms in both $V$ and $\Lambda$ involve integration on unbounded domains and the discrete functions vanish outside $\W_H$, in order to estimate the infima in \eqref{eq:cea}, we need to rely on identities that do not depend on the discrete approximation but on the behavior of $u$ and $\lambda$. For the term corresponding to the norm of $u$, Corollary \ref{cor:norma} suffices (as long as $\text{supp}(g) \subset \W_H$), whereas for the nonlocal derivative contribution it is necessary to formulate an explicit decay estimate.
\begin{proposition} \label{prop:dec_nsu} Let $\W_H$ be such that $\text{supp}(g) \subset \W_H$. 
Then, there exists a constant $C$, independent of $f$, $g$ {and $H$}, such that the estimate
\[ \begin{aligned}
\| (1-\eta) \lambda \|_{\widetilde H^{-s}(\W_{H-1}^c)} & \le  \| \lambda \|_{L^2(\W_{H-1}^c)} \\\
& \le C H^{-(n/2 + 2s)}  \left( \| f \|_{H^{\regf}(\W)}  + \| g \|_{H^{\regg}(\W^c)} \right)
\end{aligned} \]
holds, where $\eta$ is the cutoff function from \eqref{eq:cutoff}.
\end{proposition}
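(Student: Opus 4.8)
The plan is to obtain a pointwise decay estimate for $\lambda=\mathcal N_s u$ at points far from $\W$ and then integrate it. The first inequality is immediate: since $0\le 1-\eta\le 1$ with $1-\eta$ supported in $\W_{H-1}^c$, and since for $s>0$ the $\widetilde H^{-s}$-norm is dominated by the $L^2$-norm (because $\|\varphi\|_{L^2}\le\|\varphi\|_{H^s}$), we get $\|(1-\eta)\lambda\|_{\widetilde H^{-s}(\W_{H-1}^c)}\le\|(1-\eta)\lambda\|_{L^2(\W_{H-1}^c)}\le\|\lambda\|_{L^2(\W_{H-1}^c)}$.

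For the second, and main, inequality I would first fix the geometry. Assuming without loss of generality that $0\in\W$ and $\W\subset B_{R_0}$, the distance bounds \eqref{eq:def_OmegaH} imply that any $x\in\W_{H-1}^c$ satisfies $d(x,\W)\ge cH$, since any segment joining $x$ to $\W$ must cross $\partial\W_{H-1}$. In particular $|x|\ge cH$, and for $H$ large and $y\in\W$ one has $|x-y|\ge|x|-R_0\ge|x|/2$. Starting from $\lambda(x)=C(n,s)\int_\W\frac{u(x)-u(y)}{|x-y|^{n+2s}}\,dy$ and recalling that $u=g$ on $\W^c$ and $|\W|<\infty$, these bounds yield the pointwise estimate
\[
|\lambda(x)|\le\frac{C}{|x|^{n+2s}}\Big(|g(x)|+\|u\|_{L^2(\W)}\Big),\qquad x\in\W_{H-1}^c,
\]
where $\|u\|_{L^1(\W)}\le|\W|^{1/2}\|u\|_{L^2(\W)}$ was used.

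Next I would square this bound and integrate over $\W_{H-1}^c$, splitting the two contributions. For the $g$-term, the hypothesis $\text{supp}(g)\subset\W_H$ together with $|x|\ge cH$ gives a factor $H^{-2(n+2s)}\|g\|_{L^2(\W^c)}^2$. For the remaining term, a computation in polar coordinates over $\{|x|\ge cH\}$,
\[
\int_{\{|x|\ge cH\}}\frac{dx}{|x|^{2(n+2s)}}=C\,H^{-(n+4s)},
\]
produces $C\,H^{-(n+4s)}\|u\|_{L^2(\W)}^2$. Since $n+4s\le 2(n+2s)$, the second contribution dominates for $H\ge1$, so after taking square roots
\[
\|\lambda\|_{L^2(\W_{H-1}^c)}\le C\,H^{-(n/2+2s)}\Big(\|g\|_{L^2(\W^c)}+\|u\|_{L^2(\W)}\Big).
\]
Finally, $\|g\|_{L^2(\W^c)}\le\|g\|_{H^{\regg}(\W^c)}$ and, by Theorem~\ref{teo:regularidadDirect}, $\|u\|_{L^2(\W)}\le\|u\|_{H^{s+1/2-\eps}(\rn)}\le C\big(\|f\|_{H^{\regf}(\W)}+\|g\|_{H^{\regg}(\W^c)}\big)$, which closes the estimate.

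The main obstacle is to retain the genuine spatial decay in $|x|$ rather than the merely uniform bound $d(x,\W)\gtrsim H$: it is this extra decay, obtained via $|x-y|\gtrsim|x|\gtrsim H$ and the finite volume of $\W$, that makes the $L^2$-integral over the unbounded set $\W_{H-1}^c$ converge and yields precisely the rate $H^{-(n/2+2s)}$. Some care is also needed to exploit the support hypothesis on $g$, ensuring the boundary contribution does not spoil the decay.
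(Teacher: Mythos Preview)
Your proof is correct and follows essentially the same approach as the paper: a pointwise decay bound for $\lambda(x)$ at points $x$ far from $\W$, followed by an $L^2$-integration over $\W_{H-1}^c$ and an appeal to the regularity estimate for $\|u\|_{L^2(\W)}$. The only cosmetic difference is that you simplify $|x-y|\gtrsim|x|$ first and then integrate $|x|^{-2(n+2s)}$ directly, whereas the paper keeps the kernel $|x-y|^{-(n+2s)}$, applies Minkowski's integral inequality and Cauchy--Schwarz, and estimates the auxiliary function $\omega(y)=\big(\int_{\W_{H-1}^c}|x-y|^{-2(n+2s)}dx\big)^{1/2}\le CH^{-(n/2+2s)}$; both routes yield the same rate.
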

\begin{proof}
It is evident that 
\[ 
\| (1-\eta)\lambda \|_{\widetilde H^{-s}(\W_{H-1}^c)} \le \| (1-\eta)\lambda \|_{L^2(\W_{H-1}^c)} \le \| \lambda \|_{L^2(\W_{H-1}^c)}.
\]
Given $x \in \W_{H-1}^c$, it holds that
\[
|\lambda(x)| \le C(n,s) \left[ \int_\W \frac{|u(y)|}{|x-y|^{n+2s}} \, dy 
+  |g(x)| \int_\W \frac{1}{|x-y|^{n+2s}} \, dy \right],
\]
and therefore
\begin{equation} \label{eq:estimacion_lambda} \begin{aligned}
\| \lambda \|_{L^2(\W_{H-1}^c)}^2 \le C \bigg[ & \int_{\W_{H-1}^c} \left(\int_\W \frac{|u(y)|}{|x-y|^{n+2s}} \, dy \right)^2  dx
 \\
& + \int_{\W_{H-1}^c}  |g(x)|^2 \left( \int_\W \frac{1}{|x-y|^{n+2s}} \, dy \right)^2 dx \bigg] .
\end{aligned} 
\end{equation}
We estimate the two integrals in the right hand side above separately.
As for the first one, consider the auxiliary function $\w: \W \to \mathbb{R}$, 
\[
\w(y) = \left(  \int_{\W_{H-1}^c} \frac{1}{|x-y|^{2(n+2s)}} \right)^{1/2};
\]
integrating in polar coordinates and noticing that $(H-1)^{-(n/2+2s)} \le C H^{-(n/2+2s)}$, we deduce
\[
| \w (y) | \le C H^{-(n/2+2s)} \quad \forall y \in \W ,
\]
and so, $ \| \w \|_{L^2(\W)} \le C H^{-(n/2+2s)}$.
As a consequence, 
applying Minkowski's integral inequality, the Cauchy-Schwarz inequality and the previous estimate for $ \| \w \|_{L^2(\W)}$, we obtain
\[ \begin{aligned}
\int_{\W_{H-1}^c} \left(\int_\W \frac{|u(y)|}{|x-y|^{n+2s}} \, dy \right)^2  dx & \le C \left( \int_\W |u(y)| \, |\w(y)| \, dy \right)^2  \\
& \le  C H^{-2(n/2+2s)} \| u \|_{L^2(\W)}^2. 
\end{aligned} \]
Finally, the $L^2$-norm of $u$ is controlled in terms of the data (see, for example, \eqref{eq:reg_u}).

As for the second term in the right hand side in \eqref{eq:estimacion_lambda}, it suffices to notice that for $x \in \W_{H-1}^c$, it holds
\[
\int_\W \frac{1}{|x-y|^{n+2s}} \, dy \le C H^{-(n+2s)}.
\]
This implies that
\[
\int_{\W_{H-1}^c}  |g(x)|^2 \left( \int_\W \frac{1}{|x-y|^{n+2s}} \, dy \right)^2 dx \le
C H^{-2(n+2s)} \| g \|_{L^2(\W_{H-1}^c)}^2 ,
\]
and concludes the proof.
\end{proof}

\begin{remark} \label{rem:orden_H}
As the finite element approximation $u_h$ to $u$ in $\W_H$ has an $H^s$-error of order $h^{1/2-\eps},$ we need the previous estimate for the nonlocal derivative to be at least of the same order. Thus, we require $H^{-(n/2+2s)} \le C h^{1/2}$, that is, $H\ge C h^{-1/(n+4s)}.$ 
\end{remark}

Collecting the estimates we have developed so far, we are ready to prove the following.

\begin{theorem} \label{teo:convergencia_bounded}
Let $\W$ be a bounded, smooth domain, $f\in H^{{1/2-s}}(\W)$ and $g \in H^{{s+1/2}}(\W^c)$. Moreover, assume that $g$ has bounded support and consider $\W_H$ according to \eqref{eq:def_OmegaH}, with $H \gtrsim h^{-1/(n+4s)}.$ For the finite element approximations considered in this work and $h$ small enough, the following a priori estimates hold:
\begin{align}
& \| u - u_h \|_{V}  \le C h^{1/2 - \eps} \Sigma_{f,g},  \label{eq:aprox_u} \\ 
& \| \lambda - \lambda_h \|_{\Lambda} \le C h^{1/2-\eps} \Sigma_{f,g}. \label{eq:aprox_nsu}
\end{align}
for a constant $C$ depending on $\eps$ but independent of $h$, $H$, $f$ and $g$, and $\Sigma_{f,g}$ defined by \eqref{eq:def_sigma}.
\end{theorem}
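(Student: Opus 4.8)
The plan is to start from the quasi-optimality estimate of Proposition~\ref{prop:cea}, which reduces both bounds \eqref{eq:aprox_u} and \eqref{eq:aprox_nsu} to controlling the two best-approximation errors
\[
\inf_{v_h \in V_h} \| u - v_h \|_V \quad\text{and}\quad \inf_{\mu_h \in \Lambda_h} \| \lambda - \mu_h \|_\Lambda ,
\]
since the left-hand side of \eqref{eq:cea} dominates each summand separately. The crux is that the norms in $V = H^s(\rn)$ and $\Lambda = \widetilde{H}^{-s}(\W^c)$ are taken over unbounded sets, whereas every discrete function is supported in $\overline{\W}_H$; each infimum must therefore be split into a contribution inside $\W_H$, handled by the interpolation estimates of Proposition~\ref{prop:interpolation}, and a tail contribution outside, where the coupling $H \gtrsim h^{-1/(n+4s)}$ enters.

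For the first infimum I would exploit that $u$ is compactly supported: since $\mathrm{supp}(g)\subset\W_H$ for $h$ small enough and $u=g$ on $\W^c$, the solution vanishes on $\W_H^c$, so $u\in\widetilde{H}^s(\W_H)$. For any $v_h\in V_h$ the difference $u-v_h$ also lies in $\widetilde{H}^s(\W_H)$, whence Corollary~\ref{cor:norma} applied to $\W_H$ gives $\|u-v_h\|_V=\|u-v_h\|_{H^s(\rn)}\le C\|u-v_h\|_{H^s(\W_H)}$ for $s<1/2$, and the analogous seminorm bound for $s>1/2$. Taking the infimum over $v_h$ and invoking \eqref{eq:interpolation_H_u} yields $\inf_{v_h}\|u-v_h\|_V\le C h^{1/2-\eps}\Sigma_{f,g}$. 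A point to be careful about is that the constant in Corollary~\ref{cor:norma} must be uniform in $H$; for the ball-like domains $\W_H$ this follows by a scaling argument, the $s>1/2$ seminorm inequality being scale invariant and the $s<1/2$ full-norm case only improving as the diameter grows.

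For the second infimum the difficulty is that $\lambda$ is \emph{not} compactly supported. Here I would fix the cutoff $\eta$ of \eqref{eq:cutoff} and choose $\mu_h=P_h(\eta\lambda)$, so that Remark~\ref{remark:triangular} splits the error as
\[
\|\lambda-\mu_h\|_{\widetilde{H}^{-s}(\W^c)} \le \|\eta\lambda-\mu_h\|_{\widetilde{H}^{-s}(\W_H\setminus\W)} + \|(1-\eta)\lambda\|_{\widetilde{H}^{-s}(\W_{H-1}^c)} .
\]
The near term is $C h^{1/2-\eps}\Sigma_{f,g}$ by \eqref{eq:interpolation_H_nsu}, while the far term is controlled by the decay estimate of Proposition~\ref{prop:dec_nsu}, giving $\|(1-\eta)\lambda\|_{\widetilde{H}^{-s}(\W_{H-1}^c)}\le C H^{-(n/2+2s)}\Sigma_{f,g}$. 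The hypothesis $H\gtrsim h^{-1/(n+4s)}$ (cf.\ Remark~\ref{rem:orden_H}) is precisely what forces $H^{-(n/2+2s)}\le C h^{1/2}$, so both contributions are of order $h^{1/2-\eps}$.

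Finally I would feed the two bounds into Proposition~\ref{prop:cea} to obtain $\|u-u_h\|_V+\|\lambda-\lambda_h\|_\Lambda\le C h^{1/2-\eps}\Sigma_{f,g}$, reading off \eqref{eq:aprox_u} and \eqref{eq:aprox_nsu} separately. The main obstacle throughout is the bookkeeping of uniformity in the growing parameter $H$: it is essential that Corollary~\ref{cor:norma}, Proposition~\ref{prop:interpolation} and Proposition~\ref{prop:dec_nsu} all carry $H$-independent constants, and that the tail-decay rate is correctly matched to the interpolation rate through the choice of $H$. Once these ingredients are in place the argument is a direct assembly of the preceding results, with $s\neq\tfrac12$ assumed throughout so that the discrete inf-sup condition underlying Proposition~\ref{prop:cea} holds.
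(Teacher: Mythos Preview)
Your proposal is correct and follows essentially the same route as the paper: start from the quasi-optimality of Proposition~\ref{prop:cea}, control the first infimum by passing from $V$ to $H^s(\W_H)$ via Corollary~\ref{cor:norma} (using that $\mathrm{supp}(g)\subset\W_H$ for $h$ small) and then invoking \eqref{eq:interpolation_H_u}, and split the second infimum through the cutoff $\eta$ and Remark~\ref{remark:triangular} into a near part handled by \eqref{eq:interpolation_H_nsu} and a tail handled by Proposition~\ref{prop:dec_nsu} together with the scaling $H\gtrsim h^{-1/(n+4s)}$. Your explicit remark about the $H$-uniformity of the constant in Corollary~\ref{cor:norma} is a point the paper passes over silently, so your additional scaling comment there is a welcome clarification rather than a deviation.
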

\begin{proof}
In order to obtain the above two inequalities, it is enough to estimate the infima in \eqref{eq:cea}. 
Since $g$ is boundedly supported and $H \to \infty$ as $h \to 0$, if $h$ is small enough then $\mbox{supp}(g) \subset \W_H$. So, $ u - v_h \in \widetilde{H}^{s}(\W_H)$ for all $v_h \in V_h$ and thus we may apply Corollary \ref{cor:norma} (or Remark \ref{remark:un_medio} if $s=1/2$)
together with \eqref{eq:interpolation_H_u}:
\begin{align*}
\inf_{v_h \in V_h} \| u - v_h \|_V & \le C \inf_{v_h \in V_h} \| u - v_h \|_{H^s(\W_H)} \le 
 C h^{1/2 - \eps } \, \Sigma_{f,g}.
\end{align*}

The infimum involving the nonlocal derivative is estimated as follows. Consider the cutoff function $\eta$ from \eqref{eq:cutoff}. Since $\mu_h$ vanishes in $\W_H^c$, using Remark \ref{remark:triangular}, we have 
\[
\inf_{\mu_h \in \Lambda_h} \| \lambda - \mu_h \|_{\Lambda} \le \inf_{\mu_h \in \Lambda_h} \|\eta \lambda - \mu_h \|_{\widetilde H^{-s}(\W_H \setminus \W)} + \| (1-\eta)\lambda \|_{\widetilde H^{-s}(\W_{H-1}^c)} .
\]
The first term on the right hand side is bounded by means of estimate \eqref{eq:interpolation_H_nsu}, whereas for the second one we apply Proposition \ref{prop:dec_nsu} and notice that the choice of $H$ implies that $H^{-(n/2+2s)} \le C h^{1/2}.$
It follows that 
\[
\inf_{\mu_h \in \Lambda_h} \| \lambda - \mu_h \|_{\Lambda}  \le C h^{1/2 - \eps} \Sigma_{f,g},
\]
and the proof is completed.
\end{proof}

\subsection{The Direct Method}
As it is already mentioned in the introduction, in this work we mainly focus on the
mixed formulation. Nonetheless, here we provide some details regarding the 
direct discrete formulation. 
We consider the discrete problem: find $u_h\in V_{h,g_h}$ such that
\begin{equation}
 \label{eq:discreteDirect}
 a(u_h, v_h)  = F(v_h) \quad \forall v_h \in K_h, 
\end{equation}
where $V_{h,g_h}$ is the subset of $V_h$ of functions that agree with $g_h$
in $\W_H\setminus\W$. 

The function $g_h$ is chosen as an approximation of $g$; for instance, we may consider $g_h=\Pi_h(g)$. As a consequence,  it holds that $\|g-g_h\|_{H^s(\W^c)}\le Ch^{1/2-\eps} \|g\|_{H^{\regg}(\W^c)}$. Let $u$ and $u^{(h)}$ be the solutions of the continuous problem with right hand side $f$ and Dirichlet conditions  $g$ and $g_h$, respectively. Using Proposition \ref{prop:well_posedness_direct}, we deduce that 
\[
\|u- u^{(h)}\|_V\le C h^{1/2-\eps}\|g\|_{H^{\regg}(\W^c)}.
\]
Therefore, in order to bound $\|u- u_h\|_V$ it is enough to bound 
$\| u^{(h)}-u_h\|_V$. However, if $\text{supp}(g)\subset \W_H$, then $u^{(h)}-u_h\in K=\widetilde H^s(\W)$ and 
due to the continuity and coercivity of $a$ in $K$ we deduce the best approximation property, 
$$\|u^{(h)}-u_h\|_V\le C \inf_{v_h\in V_{g_h}}\|u^{(h)}-v_h\|_V.$$
Taking $v_h=\Pi_h(u)$  and using the triangle inequality we are led to bound
$\|u^{(h)}-u\|_V$ and $\| u-\Pi_h(u)\|_V$. A further use of interpolation estimates allows
to  conclude
\begin{theorem} \label{teo:convergencia_bounded_direct}
Let $\W$ be a bounded, smooth domain, $f\in H^{\regf}(\W)$, $g \in H^{\regg}(\W^c)$ for some $\eps >0$, and assume that $\text{supp}(g) \subset {\W_H}$. 
For the finite element approximations considered in this subsection, it holds that
\[
\| u - u_h \|_{V}  \le C h^{1/2 - \eps} \left(  \| f \|_{H^{\regf}(\W)}  + \| g \|_{H^{\regg}(\W^c)} \right),
\]
for a constant $C$ depending on $\eps$ but independent of $h$, $H$, $f$ and $g$.
 \end{theorem}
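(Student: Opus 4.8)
The plan is to formalize the comparison argument sketched above, splitting the error $u-u_h$ through an auxiliary \emph{continuous} problem carrying the discrete Dirichlet datum. First I would set $g_h=\Pi_h(g)$ and let $u^{(h)}\in V_{g_h}$ be the exact solution of \eqref{eq:cont_direct} with data $f$ and $g_h$. Since $\Pi_h$ obeys \eqref{eq:sz_estimate} and $g\in H^{\regg}(\W^c)$, interpolation gives $\|g-g_h\|_{H^s(\W^c)}\le C h^{1/2-\eps}\|g\|_{H^{\regg}(\W^c)}$. The difference $u-u^{(h)}$ solves the direct problem with right-hand side $f$ replaced by $0$ and Dirichlet datum $g-g_h$, so the linear stability bound of Proposition \ref{prop:well_posedness_direct} yields $\|u-u^{(h)}\|_V\le C h^{1/2-\eps}\|g\|_{H^{\regg}(\W^c)}$. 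By the triangle inequality it then suffices to control $\|u^{(h)}-u_h\|_V$.

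Next I would exploit Galerkin orthogonality on the kernel. Because $\text{supp}(g)\subset\W_H$, both $u^{(h)}$ and $u_h$ coincide with $g_h$ on $\W_H\setminus\W$ and vanish on $\W_H^c$, so their difference lies in $K=\widetilde H^s(\W)$. Moreover $u^{(h)}$ satisfies $a(u^{(h)},v)=F(v)$ for every $v\in\widetilde H^s(\W)\supset K_h$, while $u_h$ solves \eqref{eq:discreteDirect}; hence $a(u^{(h)}-u_h,w_h)=0$ for all $w_h\in K_h$. Combining this orthogonality with the coercivity \eqref{eq:ellipticity} and the continuity of $a$ on $K$ (Remark \ref{remark:form_a}) produces, for any $v_h\in V_{h,g_h}$ (so that $v_h-u_h\in K_h$), the C\'ea-type estimate $\|u^{(h)}-u_h\|_V\le C\inf_{v_h\in V_{h,g_h}}\|u^{(h)}-v_h\|_V$.

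It remains to choose a good competitor. I would take $v_h=\Pi_h u$, which belongs to $V_{h,g_h}$ because $u=g$ on $\W_H\setminus\W$ and $\Pi_h$ reproduces $g_h$ there. The triangle inequality reduces the infimum to $\|u^{(h)}-u\|_V+\|u-\Pi_h u\|_V$, the first summand being already controlled by the data-perturbation estimate above. For the second, $\text{supp}(g)\subset\W_H$ forces $u-\Pi_h u\in\widetilde H^s(\W_H)$, so I would pass from the $H^s(\rn)$-norm to the $H^s(\W_H)$-norm via Corollary \ref{cor:norma} (or Remark \ref{remark:un_medio} when $s=\tfrac12$) and then invoke the regularity $u\in H^{s+1/2-\eps}(\rn)$ of Theorem \ref{teo:regularidadDirect} together with \eqref{eq:sz_estimate} (with $\sigma=s+\tfrac12-\eps$, $t=s$) to get $\|u-\Pi_h u\|_{H^s(\W_H)}\le C h^{1/2-\eps}\|u\|_{H^{s+1/2-\eps}(\rn)}\le C h^{1/2-\eps}\bigl(\|f\|_{H^{\regf}(\W)}+\|g\|_{H^{\regg}(\W^c)}\bigr)$. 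Collecting the three contributions completes the proof.

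The main obstacle, exactly as in the mixed case, is keeping every constant independent of the growing domain $\W_H$: I must use the diameter-independent form of \eqref{eq:sz_estimate} (guaranteed by its local, star-based derivation) and an $H$-uniform version of the norm equivalence behind Corollary \ref{cor:norma} for functions supported in $\W_H$. A secondary technical point is checking that $\Pi_h u$ really matches $g_h$ on $\W_H\setminus\W$, which hinges on the locality of the Scott--Zhang functionals, so that the interpolant on exterior stars depends only on $u|_{\W^c}=g$.
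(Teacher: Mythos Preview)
Your argument is correct and follows essentially the same route as the paper: introduce $g_h=\Pi_h g$ and the auxiliary continuous solution $u^{(h)}$, bound $\|u-u^{(h)}\|_V$ via Proposition~\ref{prop:well_posedness_direct}, derive a C\'ea estimate for $u^{(h)}-u_h\in K$, choose $v_h=\Pi_h u$, and finish with the quasi-interpolation bound together with the regularity from Theorem~\ref{teo:regularidadDirect}. Your additional remarks on the $H$-independence of constants and on $\Pi_h u|_{\W_H\setminus\W}=g_h$ spell out points the paper leaves implicit.
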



\section{Volume constraint truncation error}\label{sec:bdry}
The finite element approximations performed in the previous section refer to a problem in which the Dirichlet condition $g$ has bounded support. Here, we develop error estimates without this restriction on the volume constraints.
However, as it is not possible to mesh the whole support of $g$, we are going to take into account the Dirichlet condition in the set $\W_H$ considered in the previous section. We compare $u$, the solution to \eqref{eq:dirichletnh} to $\tilde u$, the solution to
\begin{equation} \label{eq:dirichlet_tilde}
\left\lbrace
  \begin{array}{rl}
      (-\Delta)^s \tilde u = f & \mbox{ in }\W, \\
      \tilde u = \tilde g & \mbox{ in }\W^c , \\
      \end{array}
    \right.
\end{equation}
where  $\tilde{g} = \eta g$, and $\eta$ is the cutoff function \eqref{eq:cutoff}.
This allows to apply the finite element estimates developed in Section \ref{sec:fe_approximations} to problem \eqref{eq:dirichlet_tilde}, because $\text{supp}(\tilde g) \subset \overline{\W_H}$.
The objective of this section is to show that choosing $H$ in the same fashion as there, namely $H \ge C h^{-1/(n+4s)}$, leads to the same order of error between the continuous truncated problem and the original one.

Since the problems under consideration are linear, without loss of generality we may assume that $g\ge 0$ (otherwise split $g = g_+ - g_-$ and work with the two problems separately). 

\begin{proposition} \label{est_truncado}
The following estimate holds:
\begin{equation}
| u - \tilde u |_{H^s(\W)} \le C H^{-(n/2+2s)} \| g \|_{L^2(\W_H^c)} ,
\label{eq:est_truncado}
\end{equation}
for a constant $C$ independent of $H$ and $g$.
\end{proposition}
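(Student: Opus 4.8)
The plan is to estimate $w := u - \tilde u$, which satisfies the homogeneous interior equation $(-\Delta)^s w = 0$ in $\W$ together with the exterior datum $w = g - \tilde g = (1-\eta) g$ in $\W^c$. Since $\eta \equiv 1$ on $\W_{H-1}\setminus\W$ and $\mathrm{supp}(g)$ is unbounded but $\tilde g = \eta g$ vanishes outside $\overline{\W}_H$, the exterior datum $(1-\eta)g$ is supported in $\W_{H-1}^c$ (in fact it equals $g$ on $\W_H^c$ and interpolates in the transition region). The point is that $w$ solves a nonhomogeneous Dirichlet problem whose exterior data lives far away from $\W$, at distance comparable to $H$.

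**The energy identity**

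First I would test the weak formulation \eqref{eq:debil} for $w$ against $w$ itself, or rather use the coercivity structure from Remark \ref{remark:form_a}. Writing $w = w_0 + w^E$ where $w_0 \in \widetilde H^s(\W)$ and $w^E$ is any extension of the exterior datum, the natural route is to use that over $\widetilde H^s(\W)$ the form $a$ reduces to $\tfrac{C(n,s)}{2}|\cdot|_{H^s(\rn)}^2$. Concretely, since $(-\Delta)^s w = 0$ in $\W$ and $w \in \widetilde H^s(\W)^c$ only through its exterior values, the energy $|w|_{H^s(\W)}$ should be controllable by a pairing of the interior part of $w$ against the exterior datum. The key mechanism is that the interaction kernel $|x-y|^{-n-2s}$ couples the interior of $\W$ to the region $\W_H^c$ only through the very small factor coming from $\ws$-type integrals over a far-away set, exactly as quantified in Proposition \ref{prop:dec_nsu}.

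**Extracting the decay**

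The central estimate to exploit is the same polar-coordinate computation used in Proposition \ref{prop:dec_nsu}: for $y \in \W$ and the far region $\W_H^c$ (or $\W_{H-1}^c$), one has $\int_{\W_H^c} |x-y|^{-2(n+2s)}\,dx \le C H^{-(n+4s)}$, whence the relevant $L^2(\W)$-type weight is bounded by $C H^{-(n/2+2s)}$. I would bound $|w|_{H^s(\W)}^2$ by a bilinear pairing of the form $\iint \frac{(w(x)-w(y))\,g(x)}{|x-y|^{n+2s}}$ where one argument ranges over $\W$ and the other over $\W_H^c$; applying Cauchy--Schwarz together with the Poincar\'e inequality \eqref{eq:poincare} and the Minkowski integral inequality (again as in Proposition \ref{prop:dec_nsu}) converts this into $C H^{-(n/2+2s)}\|g\|_{L^2(\W_H^c)}\,|w|_{H^s(\W)}$. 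Dividing through by $|w|_{H^s(\W)}$ yields \eqref{eq:est_truncado}.

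**Main obstacle**

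The hard part will be setting up the energy identity cleanly so that only the cross-interaction between $\W$ and the far region $\W_H^c$ survives, rather than a contribution from the transition zone $\W_H\setminus\W_{H-1}$ where $1-\eta$ is nonzero but $g$ need not be small. I expect one must verify that the datum $(1-\eta)g$ genuinely sees only the kernel weight over sets at distance $\gtrsim H$ from $\W$, so that the decay factor $H^{-(n/2+2s)}$ is not spoiled; since $\mathrm{supp}(1-\eta)\subset\W_{H-1}^c$ and $\W_{H-1}^c$ is at distance $\gtrsim H$ from $\W$, this is consistent, and the $\|g\|_{L^2(\W_H^c)}$ on the right (rather than $\|g\|_{L^2(\W_{H-1}^c)}$) should follow because the transition annulus contributes a term of the same or smaller order absorbed into $C$. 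The remaining care is purely in tracking which exterior norm of $g$ appears; everything else is a direct reprise of the estimates already established for $\lambda$ in Proposition \ref{prop:dec_nsu}.
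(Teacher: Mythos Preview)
Your overall strategy---test the $s$-harmonic equation for $w=u-\tilde u$ against its truncation $\tilde w := w\chi_\W\in\widetilde H^s(\W)$, obtain an energy identity whose only cross term couples $\W$ with $\W_{H-1}^c$, and bound that term via the polar-coordinate decay $H^{-(n/2+2s)}$---matches the paper. Where you diverge is in the closing step: the paper does \emph{not} use Poincar\'e. Instead it invokes the comparison principle (reducing to $g\ge 0$ so that $w\ge 0$), and then tests a second time with a function equal to $1$ on $\W$ to derive
\[
\|w\|_{L^1(\W)}\le C H^{-(n/2+2s)}\|g\|_{L^2(\W_{H-1}^c)},
\]
which is fed back into the energy estimate. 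Your route via Poincar\'e is more direct and dispenses with both the sign assumption and the auxiliary test.

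There is, however, a small gap in your write-up. Poincar\'e \eqref{eq:poincare} applies to $\tilde w\in\widetilde H^s(\W)$ and yields $\|w\|_{L^2(\W)}\le C|\tilde w|_{H^s(\rn)}$, \emph{not} $\le C|w|_{H^s(\W)}$; the latter is false in general. Since you phrase both sides of your inequality in terms of $|w|_{H^s(\W)}$, the division step does not close. The remedy is to keep the weighted term that the paper discards: testing with $\tilde w$ gives exactly
\[
|w|_{H^s(\W)}^2 + 2\int_\W w^2\,\ws
= 2\int_\W w(x)\int_{\W_{H-1}^c}\frac{(g-\tilde g)(y)}{|x-y|^{n+2s}}\,dy\,dx,
\]
and the left side equals $|\tilde w|_{H^s(\rn)}^2$. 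Bounding the right side by $C H^{-(n/2+2s)}\|g\|_{L^2}\,\|w\|_{L^2(\W)}$ and then using either Poincar\'e on $\tilde w$ or simply the lower bound $\ws\ge c>0$ (Definition~\ref{def:w}) to absorb $\|w\|_{L^2(\W)}$ into the left side, you obtain $|\tilde w|_{H^s(\rn)}\le C H^{-(n/2+2s)}\|g\|_{L^2}$, and hence \eqref{eq:est_truncado}. With this adjustment your argument is complete and in fact cleaner than the paper's.
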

\begin{proof}
Denote $\phii = u - \tilde{u}$ the difference between the solutions to equations \eqref{eq:dirichletnh} and \eqref{eq:dirichlet_tilde}.
{Then,
\[ \left\lbrace
  \begin{array}{rll}
      (-\Delta)^s \phii & = 0 & \mbox{ in }\W, \\
      \phii & =  g - \tilde g  \ge 0 & \mbox{ in }\W^c . \\
      \end{array}
    \right.\]
 }
We emphasize that $\phii$ is nonnegative (because of the comparison principle), $s-$harmonic in $\W$ and vanishes in $\W_{H-1}  \setminus \W$. 

Moreover, let us consider $\tilde \phii = \phii \chi_\W$.  As $\phii \in H^{s+1/2-\eps}(\rn)$ vanishes in $\W_{H-1} \setminus \W$, it is clear that $\tilde \phii \in \widetilde H^{s+1/2-\eps}(\W)$, and applying the integration by parts formula \eqref{eq:parts}:
\[
a(\phii, \tilde\phii) = \int_\W \tilde{\phii} (-\Delta)^s \phii = 0.
\]
The nonlocal derivative term in last equation is null because $\tilde \phii$ vanishes in $\W^c$. Splitting the integrand appearing in the form $a$ and recalling the definition of $\ws$ \eqref{eq:def_w}, we obtain
\begin{equation} \label{eq:clave} \begin{split}
|\phii|_{H^s(\W)}^2 & = - 2 \int_\W \phii^2(x) \, \ws(x) \, dx
 + 2 \int_\W \phii(x)  \left( \int_{\W_{H-1}^c} \frac{g(y)-\tilde g (y) }{|x-y|^{n+2s}} \, dy \right) dx \\
& \le 2 \int_\W \phii(x)  \left( \int_{\W_{H-1}^c} \frac{g(y)-\tilde g (y) }{|x-y|^{n+2s}} \, dy \right) dx .
\end{split}
 \end{equation} 
Applying the Cauchy-Schwarz inequality in the integral over $\W_{H-1}^c$ and taking into account that $g - \tilde g \le g$ and that $(H-1)^{-(n/2+2s)} \le C H^{-(n/2+2s)}$, it follows immediately that
\begin{equation} \label{eq:est_hs}
|\phii|_{H^s(\W)}^2 \le C(n,s) H^{-(n/2+2s)}  \| \phii \|_{L^1(\W)} \| g \|_{L^2(\W_{H-1}^c)} .
\end{equation}
We need to bound $\|\phii\|_{L^1(\W)}$ adequately.
Let $\psi \in H^s(\rn)$ be a function that equals $1$ over $\W$. Multiplying $(-\Delta)^s\phii$ by $\psi$, integrating on $\W$ and applying \eqref{eq:parts}, since $\phii$ is $s$-harmonic in $\W$, we obtain
\[
0 = a(\phii, \psi) - \int_{\W^c} \mathcal{N}_s\phi(y) \, \psi(y) \, dy,
\]
or equivalently,
\begin{equation*} 
\begin{split}
0 & = C(n,s) \int_\W \int_{\W^c} \frac{(\phii(x) - \phii(y))(1-\psi(y))}{|x-y|^{n+2s}} \, dy \, dx \\
& - C(n,s) \int_{\W^c} \left(\int_\W \frac{\phii(y)- \phii(x)}{|x-y|^{n+2s}} \, dx \right) \psi(y)  \, dy.
\end{split} 
\end{equation*}
This implies that
\[
\int_\W \int_{\W^c} \frac{\phii(x) - \phii(y)}{|x-y|^{n+2s}} \, dy \, dx = 0 .
\]
Recalling that $\phii$ is zero in $\W_{H-1} \setminus \W$ and that $g - \tilde g \le g$, from the previous identity it follows that
\[
\int_\W \phii(x) \, \ws(x) \, dx =  \int_\W \int_{\W_{H-1}^c} \frac{g(y) - \tilde{g}(y)}{|x-y|^{n+2s}} dy dx \le C H^{-(n/2+2s)}  \| g \|_{L^2(\W_{H-1}^c)} .
\]
Recall that the function $\ws$ is uniformly bounded  in $\W$ and that $\phii\ge 0$. We deduce
\begin{equation} \label{eq:cota_L1}
\| \phii \|_{L^1(\W)} \le C H^{-(n/2+2s)} \| g \|_{L^2(\W_{H-1}^c)} ,
\end{equation}
and combining this bound with \eqref{eq:est_hs} yields \eqref{eq:est_truncado}.
\end{proof}

As a byproduct of the proof of the previous proposition, we obtain the following
\begin{lemma} \label{est_l2} There is a constant $C$ such that the bound
\begin{equation}\label{eq:est_l2}
\| u - \tilde u \|_{L^2(\W)} \le C H^{-(n/2+2s)} \| g \|_{L^2(\W_{H-1}^c)} 
\end{equation}
holds, for a constant $C$ independent of $H$ and $g$.
\end{lemma}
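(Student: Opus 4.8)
The plan is to reuse verbatim the objects and estimates assembled in the proof of Proposition~\ref{est_truncado}, and to upgrade the $L^1(\W)$ control of $\phii = u - \tilde u$ into an $L^2(\W)$ control by means of the Poincar\'e inequality \eqref{eq:poincare}. Recall that $\phii$ is nonnegative, $s$-harmonic in $\W$ and vanishes in $\W_{H-1}\setminus\W$; since moreover $\phii\in H^{s+1/2-\eps}(\rn)$, the truncation $\tilde\phii = \phii\,\chi_\W$ belongs to $\widetilde H^s(\W)$. Therefore $\|\phii\|_{L^2(\W)} = \|\tilde\phii\|_{L^2(\W)} \le C\,|\tilde\phii|_{H^s(\rn)}$, and the whole task reduces to bounding the full-space seminorm of $\tilde\phii$.

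The key observation is that this seminorm has \emph{already} been estimated. By Remark~\ref{rem:poincare},
\[
|\tilde\phii|_{H^s(\rn)}^2 = |\phii|_{H^s(\W)}^2 + 2\int_\W \phii^2(x)\,\ws(x)\,dx,
\]
and identity \eqref{eq:clave} shows that precisely this sum equals the mixed term
\[
2\int_\W\phii(x)\left(\int_{\W_{H-1}^c}\frac{g(y)-\tilde g(y)}{|x-y|^{n+2s}}\,dy\right)dx,
\]
because the negative contribution $-2\int_\W\phii^2\,\ws$ cancels. In other words, $|\tilde\phii|_{H^s(\rn)}^2$ coincides exactly with the quantity bounded in the derivation of \eqref{eq:est_hs}.

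Hence I would invoke \eqref{eq:est_hs} to control this term by $C\,H^{-(n/2+2s)}\|\phii\|_{L^1(\W)}\|g\|_{L^2(\W_{H-1}^c)}$ and then insert the $L^1$ estimate \eqref{eq:cota_L1}, namely $\|\phii\|_{L^1(\W)}\le C\,H^{-(n/2+2s)}\|g\|_{L^2(\W_{H-1}^c)}$. This yields $|\tilde\phii|_{H^s(\rn)}^2 \le C\,H^{-(n+4s)}\|g\|_{L^2(\W_{H-1}^c)}^2$; taking square roots and combining with the Poincar\'e step above gives \eqref{eq:est_l2}. I do not expect a genuine obstacle here, as every ingredient has already been produced in the previous proof; the one point deserving care is the membership $\tilde\phii\in\widetilde H^s(\W)$, which legitimizes the use of \eqref{eq:poincare}, and the \emph{conceptual} step is recognizing the cancellation that turns $|\tilde\phii|_{H^s(\rn)}^2$ into an already-controlled quantity.
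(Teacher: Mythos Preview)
Your argument is correct and follows essentially the same route as the paper: both proofs extract from the identity in \eqref{eq:clave} the equality
\[
|\phii|_{H^s(\W)}^2 + 2\int_\W \phii^2\,\ws = 2\int_\W \phii(x)\int_{\W_{H-1}^c}\frac{g(y)-\tilde g(y)}{|x-y|^{n+2s}}\,dy\,dx,
\]
bound the right-hand side via Cauchy--Schwarz and \eqref{eq:cota_L1}, and then pass to $\|\phii\|_{L^2(\W)}$. The only (cosmetic) difference is the last step: the paper drops the seminorm term and uses the uniform lower bound on $\ws$ from Definition~\ref{def:w} to control $\|\phii\|_{L^2(\W)}^2$ directly from the weighted integral, whereas you keep both terms, recognize their sum as $|\tilde\phii|_{H^s(\rn)}^2$, and invoke the Poincar\'e inequality~\eqref{eq:poincare}.
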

\begin{proof}
As before, we write $\phii = u - \tilde u$.
From the first line in \eqref{eq:clave}, 
\begin{align*}
 2 \int_\W \phii^2(x) \, \ws(x) \, dx
& \le \int_\W \phii(x)  \left( \int_{\W_{H-1}^c} \frac{g(y)-\tilde g (y) }{|x-y|^{n+2s}} \, dy \right) dx \le \\
& \le C H^{-(n/2+2s)} \| \phii \|_{L^1(\W)} \| g \|_{L^2(\W_{H-1}^c)}.
\end{align*}
Combining this estimate with \eqref{eq:cota_L1}, we deduce
\[
\int_\W \phii^2(x) \ws(x) \, dx \le C H^{-(n+4s)} \| g \|_{L^2(\W_{H-1}^c)}^2 ,
\]
where the function $\ws$ is given by Definition \ref{def:w}. 
The lower uniform boundedness of $\ws$ implies \eqref{eq:est_l2} immediately.
\end{proof}

Given $\tilde u$, the solution to \eqref{eq:dirichlet_tilde}, let us denote $\tilde \lambda = \mathcal{N}_s \tilde u$ its nonlocal normal derivative.

\begin{proposition}\label{prop:est_lambda}
There is a constant $C$ such that
\begin{equation} \label{eq:est_lambda}
\| \lambda - \tilde \lambda \|_{\Lambda} \le C H^{-(n/2+2s)}  \| g \|_{L^2(\W_{H-1}^c)} ,
\end{equation}
for a constant $C$ independent of $H$ and $g$.
\end{proposition}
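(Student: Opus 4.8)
The plan is to exploit that $\phii:=u-\tilde u$ is $s$-harmonic in $\W$, vanishes in $\W_{H-1}\setminus\W$, and reduces, after one decomposition, to quantities already controlled in Proposition~\ref{est_truncado} and Lemma~\ref{est_l2}. Since $\lambda-\tilde\lambda=\mathcal{N}_s\phii$ on $\W^c$ and $\Lambda=(H^s(\W^c))'$, I would estimate the norm by duality,
\[
\|\lambda-\tilde\lambda\|_\Lambda=\sup_{v\in H^s(\W^c)}\frac{\langle\mathcal{N}_s\phii,v\rangle}{\|v\|_{H^s(\W^c)}},
\]
testing against the extension $Ev\in H^s(\rn)$ from Lemma~\ref{extension} (applied to $\W^c$), which satisfies $\|Ev\|_{H^s(\rn)}\le C\|v\|_{H^s(\W^c)}$.

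The key preliminary step is a splitting of the nonlocal derivative. With $\tilde\phii=\phii\chi_\W\in\widetilde H^s(\W)$ (as in Proposition~\ref{est_truncado}; membership holds because $\phii$ vanishes in a neighbourhood of $\partial\W$ inside $\W_{H-1}\setminus\W$) and $m(x)=\int_\W|x-y|^{-(n+2s)}\,dy$, a direct computation gives on $\W^c$
\[
\mathcal{N}_s\phii=\mathcal{N}_s\tilde\phii+C(n,s)\,(1-\eta)g\,m .
\]
The last term is harmless: $(1-\eta)g$ is supported in $\W_{H-1}^c$, where $m(x)\le CH^{-(n+2s)}$, so, using $\|\cdot\|_\Lambda\le\|\cdot\|_{L^2(\W^c)}$, its $\Lambda$-norm is $\le CH^{-(n+2s)}\|g\|_{L^2(\W_{H-1}^c)}$.

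For the main term I would use that $\tilde\phii\in\widetilde H^s(\W)$ implies $\mathcal{N}_s\tilde\phii=(-\Delta)^s\tilde\phii$ on $\W^c$, so that \eqref{eq:parts} applied to $\tilde\phii$ and $Ev$ yields $\langle\mathcal{N}_s\tilde\phii,v\rangle=a(\tilde\phii,Ev)-\int_\W Ev\,(-\Delta)^s\tilde\phii$. The form term is bounded by continuity of $a$ through $|\tilde\phii|_{H^s(\rn)}$, and Remark~\ref{rem:poincare} gives $|\tilde\phii|_{H^s(\rn)}^2=|\phii|_{H^s(\W)}^2+2\int_\W\phii^2\,\ws$, both summands being $\le CH^{-(n+4s)}\|g\|_{L^2(\W_{H-1}^c)}^2$ by Proposition~\ref{est_truncado} and the proof of Lemma~\ref{est_l2}. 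For the correction term I would use the $s$-harmonicity of $\phii$ to obtain the explicit formula $(-\Delta)^s\tilde\phii(x)=C(n,s)\int_{\W_{H-1}^c}(1-\eta)g(y)\,|x-y|^{-(n+2s)}\,dy$ for $x\in\W$; since $|x-y|\gtrsim H$ in this integral, the Cauchy--Schwarz estimate employed in Proposition~\ref{prop:dec_nsu} yields $\|(-\Delta)^s\tilde\phii\|_{L^2(\W)}\le CH^{-(n/2+2s)}\|g\|_{L^2(\W_{H-1}^c)}$, and hence $|\int_\W Ev\,(-\Delta)^s\tilde\phii|\le CH^{-(n/2+2s)}\|g\|_{L^2(\W_{H-1}^c)}\|v\|_{H^s(\W^c)}$. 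Collecting the three bounds and taking the supremum over $v$ gives \eqref{eq:est_lambda}.

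The delicate point is recovering the sharp exponent $n/2+2s$ rather than $n/2+s$. The shortest route, namely writing $\langle\mathcal{N}_s\phii,v\rangle=a(\phii,Ev)$ (legitimate because $\phii$ is $s$-harmonic) and applying Cauchy--Schwarz directly to the form over $Q$, is \emph{not} sharp: it retains the interaction between $\W$ and the far-field data $g$ on $\W_H^c$, which decays only like $H^{-(n/2+s)}$. That interaction does not truly enter $\mathcal{N}_s\phii$---it contributes solely through the negligible term $(1-\eta)g\,m$---so isolating the interior part $\tilde\phii$ and carrying the $s$-harmonicity into the formula for $(-\Delta)^s\tilde\phii$ on $\W$ is precisely what restores the missing half-power.
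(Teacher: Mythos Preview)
Your argument is correct and reaches the sharp exponent $n/2+2s$. The organization, however, differs from the paper's. The paper does not split $\mathcal N_s\phii$ at the level of the operator; instead it writes $\langle\mathcal N_s\phii,v\rangle=a(\phii,E v)$ directly (using $s$-harmonicity of $\phii$), and then splits the double integral defining $a$ by region: the $\W\times\W$ contribution is $\langle\phii,E\phi\rangle_{H^s(\W)}$, controlled by $|\phii|_{H^s(\W)}$ via Proposition~\ref{est_truncado}, while the $\W\times\W_{H-1}^c$ cross term is expanded into the four products $\phii(x)E\phi(x)$, $\phii(x)\phi(y)$, $\phii(y)E\phi(x)$, $\phii(y)\phi(y)$ and each is bounded separately using Lemma~\ref{est_l2}, \eqref{eq:cota_L1}, and elementary estimates on $|x-y|^{-(n+2s)}$ for $|x-y|\gtrsim H$.

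Your route---peeling off $\tilde\phii=\phii\chi_\W$, handling $\mathcal N_s\tilde\phii$ through $a(\tilde\phii,Ev)$ and the explicit formula for $(-\Delta)^s\tilde\phii$ on $\W$---is more conceptual: it explains transparently \emph{why} the naive bound loses a half-power (the far-field part of $\phii$ contributes only the innocuous term $(1-\eta)g\,m$) and it lets you invoke the full $|\tilde\phii|_{H^s(\rn)}$ bound, which packages together the $H^s(\W)$-seminorm and the weighted $L^2$ estimate from the proof of Lemma~\ref{est_l2}. The paper's four-term expansion is more hands-on but avoids having to justify the integration-by-parts identity for $\tilde\phii$ (which you do need, and which is legitimate because $\tilde\phii\in\widetilde H^{s+1/2-\eps}(\W)$ and $(-\Delta)^s\tilde\phii$ is smooth on $\W$). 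Both arguments ultimately rest on the same three inputs: Proposition~\ref{est_truncado}, Lemma~\ref{est_l2} (including the intermediate weighted bound), and the far-field decay $|x-y|^{-(n+2s)}\lesssim H^{-(n+2s)}$.
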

\begin{proof}
Let $\phi \in H^s(\W^c)$, according to Lemma \ref{extension} we consider an extension $E\phi \in H^s(\rn)$ such that $\| E\phi \|_{H^s(\rn)} \le C \| \phi \|_{H^s(\W^c)}.$
By linearity, it is clear that $\lambda - \tilde \lambda = \mathcal{N}_s \phii$, where $\phii = u - \tilde u$. Applying the integration by parts formula \eqref{eq:parts} and recalling that $\phii$ is $s-$harmonic in $\W$,
\[
\int_{\W^c} (\lambda -\tilde \lambda) \phi = \frac{C(n,s)}{2} \iint_Q \frac{(\phii(x)-\phii(y))  (E\phi(x)-E\phi(y))}{|x-y|^{n+2s}} \, dx \, dy.
\]
Since $\phii$ vanishes in $\W_{H-1} \setminus \W$, it is simple to bound
\begin{equation*}\begin{split}
\int_{\W^c} & (\lambda -\tilde \lambda) \phi \le \\
& C \left( \left| \langle \phii, E\phi \rangle_{H^s(\W)} \right| + \left| \int_\W \int_{\W_{H-1}^c}  \frac{(\phii(x)-\phii(y))  (E\phi(x)-\phi(y))}{|x-y|^{n+2s}} \, dx \, dy   \right| \right).
\end{split}\end{equation*}
The first term on the right hand side above is bounded by $C |\phii|_{H^s(\W)} \|\phi\|_{H^s(\W^c)}$, and Proposition \ref{est_truncado} provides the bound $|\phii|_{H^s(\W)} \le C H^{-(n/2+2s)} \| g \|_{L^2(\W_{H-1}^c)}$.
For the second term, splitting the integrand it is simple to obtain the estimates:
\begin{align*}
& \left| \int_\W \phii(x) E\phi(x) \left( \int_{\W_{H-1}^c} \frac{1}{|x-y|^{n+2s}} \, dy \right) dx \right| \le C \| \phii \|_{L^2(\W)} \| E\phi \|_{L^2(\W)}, \\
& \left| \int_\W \phii(x) \left( \int_{\W_{H-1}^c} \frac{\phi(y)}{|x-y|^{n+2s}} \, dy \right) dx \right| \le C H^{-(n/2+2s)} \| \phii \|_{L^1(\W)} \| \phi \|_{L^2(\W_{H-1}^c)}, \\
& \left| \int_\W E\phi(x)\left(\int_{\W_{H-1}^c} \frac{\phii(y)}{|x-y|^{n+2s}} \, dy \right) dx \right| \le  C H^{-(n/2+2s)} \| E \phi \|_{L^1(\W)} \| \phii \|_{L^2(\W_{H-1}^c)}, \\
& \left| \int_\W  \left( \int_{\W_{H-1}^c} \frac{\phii(y) \phi(y)}{|x-y|^{n+2s}} \, dy \right) dx \right| \le C H^{-(n+2s)} \| \phii \|_{L^2(\W_H^c)} \| \phi \|_{L^2(\W_{H-1}^c)}. 
\end{align*}
The terms on the right hand sides of the inequalities above are estimated applying Lemma \ref{est_l2} and Proposition \ref{est_truncado}, as well as recalling the continuity of the extension operator and of the inclusion $L^2(\W) \subset L^1(\W)$. We obtain
\[
\frac{\int_{\W^c} (\lambda -\tilde \lambda) \phi } { \| \phi \|_{H^s(\W^c)} } \le C H^{-(n/2+2s)}  \| g \|_{L^2(\W_{H-1}^c)} \quad \forall \phi \in H^s(\W^c).
\]
Taking supremum in $\phi$, estimate \eqref{eq:est_lambda} follows.
\end{proof}

Combining the estimates obtained in this section, we immediately prove the following result.
\begin{theorem}\label{teo:principal}
Let $(u,\lambda)$ be the solution of problem \eqref{eq:cont}, and consider $\tilde g$ as in the beginning of this section. Moreover, let $(u_h, \lambda_h)$ be the finite element approximations of the truncated problem \eqref{eq:dirichlet_tilde}, defined on $\W_H$, where $H$ behaves as $h^{-1/(n+4s)}$. Then,
\begin{equation}
\| u - u_h \|_{H^s(\W_{H-1})} \le C h^{1/2-\eps}
\, \Sigma_{f,g} \label{eq:aprox_u1}
\end{equation}
and
\begin{equation}
\| \lambda - \lambda_h \|_{\Lambda} \le C h^{1/2-\eps} 
\, \Sigma_{f,g}, \label{eq:aprox_lambda}
\end{equation}
for a constant $C$ depending on $\eps$ but independent of $h$, $H$, $f$ and $g$, and $\Sigma_{f,g}$ defined by \eqref{eq:def_sigma}.
\end{theorem}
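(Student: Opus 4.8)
The plan is to split each error via the triangle inequality into a \emph{truncation} contribution, comparing the exact solution $(u,\lambda)$ of \eqref{eq:cont} with the solution $(\tilde u,\tilde\lambda)$ of the truncated problem \eqref{eq:dirichlet_tilde}, and a \emph{discretization} contribution, comparing $(\tilde u,\tilde\lambda)$ with its finite element approximation $(u_h,\lambda_h)$. The first contribution is exactly what Propositions~\ref{est_truncado} and \ref{prop:est_lambda} and Lemma~\ref{est_l2} control. For the second, the crucial point is that $\tilde g=\eta g$ has support contained in $\overline{\W}_H$, so that Theorem~\ref{teo:convergencia_bounded} applies verbatim to the pair $(\tilde u,\tilde\lambda)$ and gives
\[
\|\tilde u-u_h\|_V+\|\tilde\lambda-\lambda_h\|_\Lambda\le C\,h^{1/2-\eps}\,\Sigma_{f,\tilde g}.
\]

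For the variable $u$ I would write $\|u-u_h\|_{H^s(\W_{H-1})}\le\|u-\tilde u\|_{H^s(\W_{H-1})}+\|\tilde u-u_h\|_{H^s(\W_{H-1})}$. The second term is bounded by $\|\tilde u-u_h\|_V$, since restricting the $H^s(\rn)$-norm to $\W_{H-1}$ only decreases it. For the first term, set $\phii=u-\tilde u$ and observe that $g-\tilde g=(1-\eta)g$ vanishes in $\W_{H-1}\setminus\W$; hence $\phii$ is supported in $\overline\W$ throughout $\W_{H-1}$, so that
\[
\|\phii\|_{H^s(\W_{H-1})}^2\le C\Big(|\phii|_{H^s(\W)}^2+\int_\W|\phii(x)|^2\,\ws(x)\,dx+\|\phii\|_{L^2(\W)}^2\Big),
\]
where the middle term accounts for the seminorm interactions between $\W$ and $\W_{H-1}\setminus\W$ and is absorbed using the uniform boundedness of $\ws$. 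Proposition~\ref{est_truncado} and Lemma~\ref{est_l2} then bound this by $C\,H^{-(n/2+2s)}\|g\|_{L^2(\W_{H-1}^c)}$. The error in $\lambda$ is handled in the same way, writing $\|\lambda-\lambda_h\|_\Lambda\le\|\lambda-\tilde\lambda\|_\Lambda+\|\tilde\lambda-\lambda_h\|_\Lambda$ and invoking Proposition~\ref{prop:est_lambda} for the truncation term.

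It then remains to match orders. The choice $H\gtrsim h^{-1/(n+4s)}$ forces $H^{-(n/2+2s)}\le C\,h^{1/2}$, so that every truncation term is $O(h^{1/2})$, while $\|g\|_{L^2(\W_{H-1}^c)}\le\|g\|_{H^{s+1/2}(\W^c)}\le\Sigma_{f,g}$. I expect the only genuinely delicate step to be replacing $\Sigma_{f,\tilde g}$ by $\Sigma_{f,g}$ on the right-hand side of the discretization estimate. The contributions $\|f\|_{H^{1/2-s}(\W)}$ and $\|\tilde g\|_{H^{s+1/2}(\W^c)}\le C\|g\|_{H^{s+1/2}(\W^c)}$ are immediate, the latter because multiplication by the smooth cutoff $\eta$, whose derivatives are bounded uniformly in $H$, is continuous on $H^{s+1/2}$. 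The hard part is the term $\|(-\Delta)^s_{\W^c}\tilde g\|_{H^{1/2-s}(\W^c)}$: one must first check that $\tilde g$ is an admissible datum in the sense of Theorem~\ref{teo:regularidad} and then bound this norm by $\Sigma_{f,g}$ uniformly in $H$. Since the regional fractional Laplacian does not commute with multiplication by $\eta$, this requires estimating the commutator $[(-\Delta)^s_{\W^c},\eta]\,g$ in addition to $\eta\,(-\Delta)^s_{\W^c}g$, and is the point where the smoothness of $\eta$ and the $H$-independent bounds on its derivatives must be exploited with care. Once $\Sigma_{f,\tilde g}\le C\,\Sigma_{f,g}$ is in hand, collecting the bounds yields \eqref{eq:aprox_u1} and \eqref{eq:aprox_lambda}.
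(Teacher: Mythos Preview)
Your overall strategy---splitting via the triangle inequality into a truncation part handled by Propositions~\ref{est_truncado}, \ref{prop:est_lambda} and Lemma~\ref{est_l2}, and a discretization part handled by Theorem~\ref{teo:convergencia_bounded} applied to $(\tilde u,\tilde\lambda)$---is exactly the paper's approach, and your treatment of the $\lambda$ estimate is correct.

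There is, however, a genuine error in your bound for $\|u-\tilde u\|_{H^s(\W_{H-1})}$. You write the cross term as $\int_\W|\phii(x)|^2\,\ws(x)\,dx$ and then claim it ``is absorbed using the uniform boundedness of $\ws$''. But $\ws$ is \emph{not} uniformly bounded above on $\W$: by Definition~\ref{def:w} it satisfies $\ws(x)\asymp\delta(x)^{-2s}$ and therefore blows up at $\partial\W$. (The phrase ``uniformly bounded'' appearing in the proofs of Proposition~\ref{est_truncado} and Lemma~\ref{est_l2} refers to the \emph{lower} bound $\ws\ge c>0$, which goes in the opposite direction.) The paper handles this term differently: since $(u-\tilde u)\chi_\W\in\widetilde H^s(\W)$, the Hardy-type inequalities of Proposition~\ref{prop:hardy} (or Remark~\ref{remark:un_medio} when $s=1/2$) give
\[
\int_\W|\phii(x)|^2\int_{\W_{H-1}\setminus\W}\frac{dy}{|x-y|^{n+2s}}\,dx
\le C\int_\W\frac{|\phii(x)|^2}{\delta(x)^{2s}}\,dx
\le C\,\|\phii\|_{H^s(\W)}^2,
\]
after which Proposition~\ref{est_truncado} and Lemma~\ref{est_l2} close the estimate. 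Once you replace your ``uniform boundedness'' step by this Hardy argument, your proof goes through.

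Your observation about $\Sigma_{f,\tilde g}$ versus $\Sigma_{f,g}$ is well taken: the paper silently invokes \eqref{eq:aprox_u} and \eqref{eq:aprox_nsu} with the original $\Sigma_{f,g}$ on the right, which amounts to assuming $\Sigma_{f,\tilde g}\le C\,\Sigma_{f,g}$ uniformly in $H$. Your identification of the commutator $[(-\Delta)^s_{\W^c},\eta]\,g$ as the nontrivial ingredient is correct, and this point is not spelled out in the paper.
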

\begin{proof}
Applying the triangle inequality, we write
\[
\| u - u_h \|_{H^s(\W_{H-1})} \le \| u - \tilde{u} \|_{H^s(\W_{H-1})} + \| \tilde{u} - u_h \|_{H^s(\W_{H-1})} .
\]
The second term above is bounded by $\| \tilde{u} - u_h \|_V$, which is controlled by \eqref{eq:aprox_u}. As for the first one, recall that $u = \tilde u$ in $\W_{H-1} \setminus \W$, so that 
\begin{equation*}\begin{split}
 \| u - & \tilde{u} \|_{H^s(\W_{H-1})}^2 =  \\
 & \| u - \tilde{u} \|_{H^s(\W)}^2 + 2 \int_\W |u(x) - \tilde{u}(x)|^2 \left( \int_{\W_{H-1}\setminus \W} \frac{1}{|x-y|^{n+2s}} \, dy \right) dx.
\end{split}\end{equation*}
The integral above is bounded by means of Hardy-type inequalities from Proposition \ref{prop:hardy}, (or by Remark \ref{remark:un_medio} if $s=1/2$)
because $(u-\tilde u)\chi_\W$ belongs to $\widetilde H^s(\W)$. So, resorting to Proposition \ref{est_truncado} and Lemma \ref{est_l2},
\[
 \| u - \tilde{u} \|_{H^s(\W_{H-1})} \le  C \| u - \tilde{u} \|_{H^s(\W)} \le C H^{-(n/2+2s)} ,
\]
which --taking into account the behavior of $H$-- is just \eqref{eq:est_truncado} and \eqref{eq:est_l2}.

Estimate \eqref{eq:aprox_lambda} is an immediate consequence of the triangle inequality, the dependence of $H$ on $h$ and equations \eqref{eq:est_lambda} and \eqref{eq:aprox_nsu}. Indeed,
\begin{align*}
\| \lambda - \lambda_h \|_{\Lambda} & \le \| \lambda - \tilde \lambda \|_{\Lambda} + \| \tilde \lambda - \lambda_h \|_{\Lambda} \le 
C h^{1/2-\eps} \, \Sigma_{f,g} .
\end{align*}
\end{proof}

\begin{remark}
We point out that \eqref{eq:aprox_u1} estimates the error in the $H^s(\W_{H-1})$-norm. Since it is only possible to mesh a bounded domain, there is no hope in general to obtain convergence estimates for $\| u - u_h \|_V$, unless some extra hypothesis on the decay of the volume constraint is included.
\end{remark}

\section{Numerical experiments} 
We display the results of the computational experiments performed for the mixed formulation of \eqref{eq:dirichletnh}. The scheme utilized for these two-dimensional examples is based on the code introduced in \cite{ABB}, where details about the computation of the matrix having entries $a(\phii_i, \phii_j)$ can be found.

The examples we provide give evidence of the convergence of the scheme towards the solution $u$ both for Dirichlet data with bounded and unbounded support.  We point out that our convergence estimates (Theorems \ref{teo:convergencia_bounded} and \ref{teo:principal}) are expressed in terms of fractional-order norms, and thus their computation is, in general, out of reach.  Whenever not possible, we compute orders of convergence in $L^2$-norms.

Also, as stated in Remark \ref{rem:s_not_1/2}, although the possibility $s = \frac12$ was excluded from our analysis, the numerical evidence we present here indicates that the same estimates hold in such a case as for $s \ne \frac12$.

Our first example is closely related to Remark \ref{rem:singular}.  Indeed, the solution considered there gives a function with constant fractional Laplacian and supported in the $n$-dimensional unit ball. In this example, however, we shrink the domain so that we produce a nonhomogeneous volume constraint with bounded support. Namely, for $\W = B(0,1/2) \subset \R^2$ we study
\begin{equation}  \label{ex:bounded_support}
\left\lbrace \begin{array}{rl l}
(-\Delta)^s u & = 2 & \text{ in } \W,\\
u & = \frac{1}{2^{2s} \Gamma(1+s)^2} (1 - |\cdot|^2)_+^s & \text{ in } \W^c.
\end{array} \right.
\end{equation} 
By linearity, the exact solution of this problem can be expressed as the sum of the solutions to problems
\begin{equation} \label{eq:auxiliary_1}
\left\lbrace \begin{array}{rl l}
(-\Delta)^s u_1 & = 1 & \text{ in } \W,\\
u_1 & = \frac{1}{2^{2s} \Gamma(1+s)^2} (1 - |\cdot|^2)_+^s & \text{ in } \W^c,
\end{array} \right.
\end{equation}
and
\begin{equation} \label{eq:auxiliary_2}
\left\lbrace \begin{array}{rl l}
(-\Delta)^s u_2 & = 1 & \text{ in } \W,\\
u_2 & = 0 & \text{ in } \W^c.
\end{array} \right.
\end{equation}
The first problem above has a smooth solution within $\overline\W$, whereas the latter has the minimal regularity guaranteed by Proposition \ref{prop:regHr}.
Explicitly, by Remark \ref{rem:singular}, the exact solution is given by
\[
u(x) = u_1(x) + u_2(x) = \frac{1}{2^{2s} \Gamma(1+s)^2} \left[ \left(1 - |x|^2\right)_+^s + \left(\frac14 - |x|^2\right)_+^s \right].
\] 

Moreover, finite element solutions to  \eqref{ex:bounded_support} can also be represented as the sum of the corresponding solutions to \eqref{eq:auxiliary_1} and \eqref{eq:auxiliary_2}.  In practice, we consider the two problems separately and add up their discrete solutions.

The error in the $H^s(\W)$-norm is estimated as follows. In first place, we write
\[
\| u - u_h \|_{H^s(\W)} \le \| u_1 - u_{1,h} \|_{H^s(\W)} +  \| u_2 - u_{2,h}\|_{H^s(\W)}.
\]
As for the first term in the right hand side above, since $u_1$ is smooth in $\W$, we may bound it by interpolation, 
\[
 \|u_1 - u_{1,h} \|_{H^s(\W)} \le  \| u_1 - u_{1,h} \|_{L^2(\W)}^{1-s}  \| u_1 - u_{1,h} \|_{H^1(\W)}^s.
\]
The second term can be computed by using the same trick as in \cite[Lemma 5.1]{AB} because it corresponds to a problem with homogeneous Dirichlet conditions,
\[
 | u_2 - u_{2,h} |_{H^s(\W)} \le  | u_2 - u_{2,h} |_{H^s(\rn)} = \left(\int_\W u_2 (x) - u_{2,h} (x) \right)^{1/2}.
\]

We carried out computations for $s \in \{0.1, \ldots , 0.9\}$ on meshes with size $h \in \{0.045, 0.037, 0.03, 0.025\}$. The auxiliary domains considered were $\W_H = B(0,H+1/2)$ with $H = C h^{-1/(2+4s)}$ and $C = C(s)$ was such that $H$ would equal $1$ if $h$ was set to $0.15$. Therefore, the support of the volume constraint was contained in every auxiliary domain $\W_H$.

Our results are summarized in Tables \ref{tab:bounded_support_s05} and \ref{tab:bounded_support}. In spite of only having upper bounds for the errors, the experimental order of convergence
E.O.C. is in good agreement with the theory. In Table \ref{tab:bounded_support_s05} it is also noticeable that the error is driven by the contribution of the nonsmooth component $u_2$, that is two orders of magnitude larger than the error of the smooth component. The observed order of convergence of the latter is in good agreement with the fact that $u_1 \in H^2(\W)$.

\begin{table}[ht] \centering
\begin{tabular}{| c | c | c | c |} \hline
$h$ & $ \|u_1 - u_{1,h} \|_{H^s(\W)} $ & $ \|u_2 - u_{2,h} \|_{H^s(\W)} $ & $\| u - u_h\|_{H^s(\W)}$ \\ \hline
$0.045$ & $7.593\times10^{-4}$ & $6.423\times10^{-2}$ & $6.499\times10^{-4}$ \\       
$0.037$ & $4.629\times10^{-4}$ & $5.742\times10^{-2}$ & $5.789\times10^{-4}$ \\
$0.030$ & $3.187\times10^{-4}$ & $5.196\times10^{-2}$ & $5.228\times10^{-4}$ \\
$0.025$ & $3.168\times10^{-4}$ & $4.799\times10^{-2}$ & $4.831\times10^{-4}$ \\
\hline \hline
E.O.C. & $1.53$ & $0.49$ & $0.50$ \\
\hline
\end{tabular}
\caption{Upper bounds for the errors in Example \ref{ex:bounded_support} with  $s = 0.5$.
}
\label{tab:bounded_support_s05} 
\end{table}

\begin{table}[ht] \centering
\begin{tabular}{| c | | c | c | c | c | c | c | c | c | c | c |} \hline
$s$ & $0.1$ & $0.2$ & $0.3$ & $0.4$ & $0.5$ & $0.6$ & $0.7$ & $0.8$ & $0.9$ \\ \hline
E.O.C. & $0.48$ & $0.48$ & $0.49$ & $0.49$ & $0.50$ & $0.53$ & $0.56$ & $0.59$ & $0.62$ \\
\hline
\end{tabular}
\caption{Experimental orders of convergence in $H^s(\W)$ for Example \ref{ex:bounded_support}, for $s \in \{0.1, \ldots , 0.9\}$. 
}
\label{tab:bounded_support} 
\end{table}
\medskip

We next display two examples where the Dirichlet condition has unbounded support, posed in the two-dimensional unit ball. The Poisson kernel for this domain is known \cite[Chapter 1]{Landkof}, and thus it is simple to obtain an explicit expression for the solutions of problems as the two we analyze next. More precisely, let $\W = B(0,r) \subset \rn$ for some $r>0$ and let $g:\W^c \to \R$. Then, a solution to 
\begin{equation} \label{eq:balayage} 
\left\lbrace \begin{array}{rl}
(-\Delta)^s u  = 0 & \text{in } \W, \\
u  = g & \text{in } \W^c, 
\end{array}
\right.
\end{equation}
is given by
\begin{equation} \label{eq:sol_balayage}
u(x) = \int_{\W^c} g(y) \, P(x,y) \, dy,
\end{equation}
where 
\[
P(x,y) = \frac{\Gamma(n/2) \sin (\pi s)}{\pi^{n/2 + 1}} \left(\frac{r^2 - |x|^2}{|y|^2 - r^2}\right)^s  \frac1{|x-y|^2}, \ x \in \W, \ y \in \W^c .
\]

We compute numerical solutions to \eqref{eq:balayage} in the two-dimensional unit ball with two different
functions
\[
g(x) = \exp(-|x|^2) \quad  \text{and} \quad  g(x) = \frac1{|x|^4} .
\]

In the experiments performed, we set $\W_H = B(0,H+1)$ with $H = C h^{-1/(2+4s)}$ and $C = C(s)$ such that $H = 1$ for $h=0.1$. We considered discretizations for 
$s \in \{0.1, \ldots , 0.9\}$ on meshes with size $h \in \{0.1, 0.082, 0.067, 0.055, 0.045\}$. 
Table \ref{tab:balayage} shows the computed orders of convergence in $L^2(\W)$ for these two problems, and Figure \ref{fig:fitting_pol} displays the computed $L^2$-errors for some values of $s$ and $g(x) = \frac1{|x|^4}$. In this example solutions are not smooth up to the boundary of $\W$. Thus, the observed convergence with orders approximately $s+1/2$ is expected.   

\begin{table}[ht]\centering
\begin{tabular}{| c | c | c |} \hline
$s$ & $g(x) = \exp(-|x|^2)$ & $g(x) = \frac1{|x|^4}$ \\ \hline
$0.1$ & $0.64$ & $0.55$ \\
$0.2$ & $0.78$ & $0.64$ \\
$0.3$ & $0.86$ & $0.74$ \\
$0.4$ & $0.90$ & $0.89$ \\
$0.5$ & $0.97$ & $1.03$ \\
$0.6$ & $1.15$ & $1.14$ \\
$0.7$ & $1.27$ & $1.16$ \\
$0.8$ & $1.32$ & $1.26$ \\
$0.9$ & $1.37$ & $1.40$ \\
\hline
\end{tabular}
\caption{Experimental orders of convergence in $L^2(\W)$ for \eqref{eq:balayage} with Dirichlet data with unbounded support.}
 \label{tab:balayage} 
\end{table}

\begin{figure}[ht]
	\centering
	\includegraphics[width=0.6\textwidth]{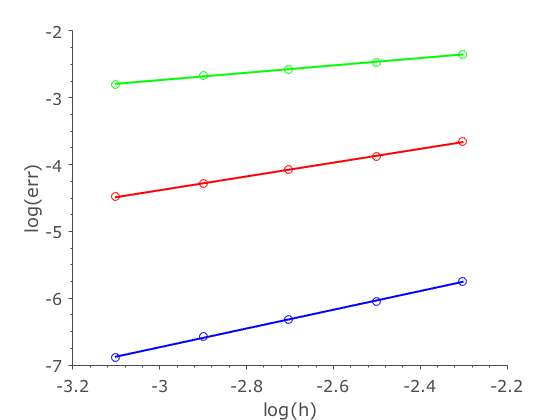} 
	\caption{Computed $L^2$-errors for $s = 0.1$ (green), $s=0.5$ (red) and $s=0.9$ (blue) for problem \eqref{eq:balayage} with $g(x) = \frac1{|x|^4}$.}	\label{fig:fitting_pol}
\end{figure}

Moreover, since we cannot mesh the support of the volume constraint, as $h$ decreases the actual region where we measure the error is expanded. According to Remark \ref{rem:orden_H}, in these experiments we have considered $H = C h^{-1/(4+2s)}$. Nevertheless, the computational cost of solving \eqref{eq:discrete} for $H$ large is extremely high. In practice, we have worked with small values of the constant $C$ that relates $H$ with $h$, especially for $s$ small.  

\medskip

Finally, since it is expected that increasing the truncation parameter $H$ leads to a better approximation, we analyze the dependence on $H$ in the previous example with $g(x) = \frac1{|x|^4}$. We compare convergence rates both in $L^2(\W_H)$ and $L^2(\rn)$. Because
\[ \| g  \|_{L^2(B(0,R)^c)} = \sqrt{\frac{\pi}{3}} \,  R^{-3} , \]
the decay of the error in $\W_H^c$ is algebraic in $h$,
\[
\| g \|_{L^2(\W_H^c)} \le C  h^{\frac{3}{2+4s}}. 
\]

Thus, if we utilize a sequence of domains $\{ \W_H \}$ with $H$ not large enough, the tail of the $L^2$-norm of the volume constraint has a large impact on the $L^2(\rn)$-error.  
In Figure \ref{fig:erres} we compare the effect of increasing the constant in the identity $H = C h^{-1/(2+4s)}.$ Errors are observed to diminish considerably, and there is a slight improvement in the orders of convergence as well. Notice also that the errors in $L^2(\rn)$ are one order of magnitude larger than errors in $L^2(\W_H)$.

\begin{figure}[ht]
	\centering
	\includegraphics[width=0.48\textwidth]{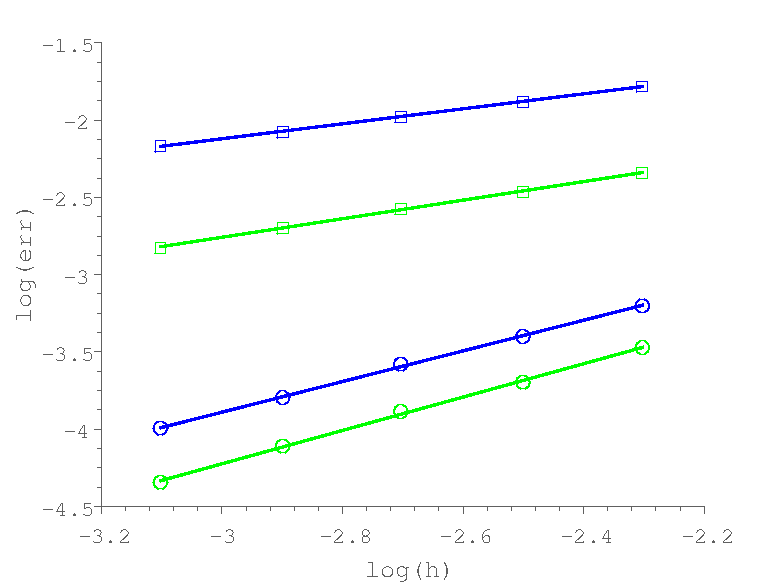} 
	\includegraphics[width=0.48\textwidth]{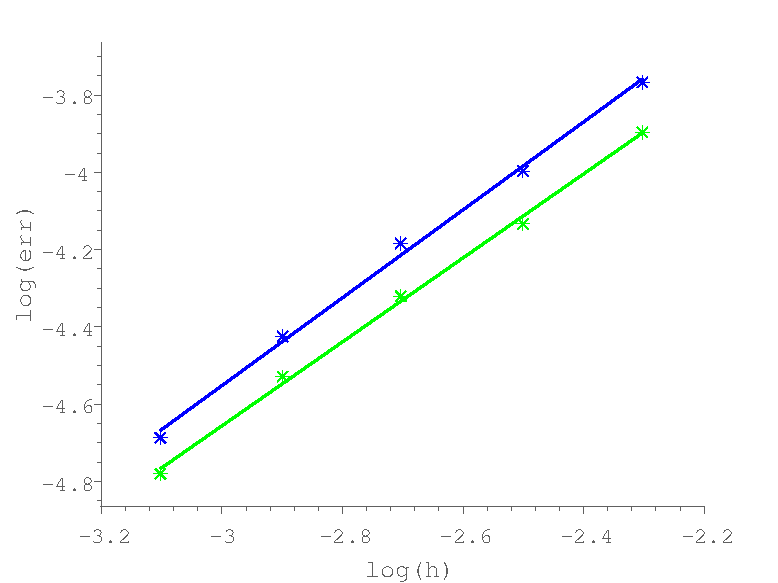} 
\caption{Left panel: convergence in $L^2(\W_H)$ (circles) and in $L^2(\rn)$ squares for problem \eqref{eq:balayage} with $s = 0.6$ and $g(x) = |x|^{-4}$.  Color blue corresponds to a sequence of meshes such that $H = 1$ if $h=0.1$; the slopes of the best fitting lines are $0.99$ and $0.48$, respectively. 
In green, we display the results with a sequence of larger auxiliary domains, corresponding to $H=1.5$ when $h=0.1$. The slopes of the green lines are $1.08$ and $0.60$. 
Right panel: convergence in $L^2(\W)$ for the same problem; the best-fitting lines have slopes $1.14$ (blue) and $1.09$ (green).}	\label{fig:erres}
\end{figure}

\bibliography{abh.bib}{}
\bibliographystyle{plain}

\end{document}